\theoremstyle{thmstyleone}%
\newtheorem{theorem}{Theorem}
\newtheorem{proposition}[theorem]{Proposition}%
\newtheorem{corollary}[theorem]{Corollary}
\newtheorem{lemma}[theorem]{Lemma}
\theoremstyle{thmstyletwo}%
\newtheorem{remark}{Remark}%
\theoremstyle{thmstylethree}%
\let\cal\mathcal
\newcommand{\one}{\boldsymbol{1}}
\def\XXint#1#2#3{{\setbox0=\hbox{$#1{#2#3}{\int}$ }
\vcenter{\hbox{$#2#3$ }}\kern-.6\wd0}}
\definecolor{codegreen}{rgb}{0,0.6,0}
\definecolor{codegray}{rgb}{0.5,0.5,0.5}
\definecolor{codepurple}{rgb}{0.58,0,0.82}
\definecolor{backcolour}{rgb}{0.95,0.95,0.92}
\lstdefinestyle{mystyle}{
  backgroundcolor=\color{backcolour},   commentstyle=\color{codegreen},
  keywordstyle=\color{magenta},
  numberstyle=\tiny\color{codegray},
  stringstyle=\color{codepurple},
  basicstyle=\footnotesize,
  breakatwhitespace=false,         
  breaklines=true,                 
  captionpos=b,                    
  keepspaces=true,                 
  numbers=left,                    
  numbersep=5pt,                  
  showspaces=false,                
  showstringspaces=false,
  showtabs=false,                  
  tabsize=2
}
\newcommand{\proofparagraph}[1]{\noindent~~\newline\newline\noindent\textbf{ \underline{#1}.}~\newline\newline}
\newcommand{\pparagraph}[1]{\noindent~~\newline\textbf{#1.}}
\def\/{|\!|\!|}
\def\${|\!|\!|}
\def\l|{\left|\!\left|\!\left|}
\def\r|{\right|\!\right|\!\right|}
\def\R{{\mathbb{R}}}
\def\T{{\mathcal{T} }}
\def\L{{\mathcal{L}}}
\def\Z{{\mathbb{Z}}}
\newcommand{\mc}[1]{\mathcal{#1}}
\newcommand{\Rplus}{\mathbb{R}_{+}}
\newcommand{\uhp}{\mathbb{H}}
\newcommand{\E}[0]{\mathbb{E}}
\newcommand\scal[2][ ]{\ifthenelse{\equal{#1}{ }}{\langle#2\rangle}{}
        \ifthenelse{\equal{#1}{b}}{\bigl\langle#2\bigr\rangle}{}
        \ifthenelse{\equal{#1}{B}}{\Bigl\langle#2\Bigr\rangle}{}
        \ifthenelse{\equal{#1}{bb}}{\biggl\langle#2\biggr\rangle}{}
        \ifthenelse{\equal{#1}{BB}}{\Biggl\langle#2\Biggr\rangle}{}}
\newcommand\indp{\protect\mathpalette{\protect\independenT}{\perp}}
\def\independenT#1#2{\mathrel{\rlap{$#1#2$}\mkern2mu{#1#2}}}
\newcommand{\Prob}{\mathbb{P}}
\newcommand{\Proba}[1]{\mathbb{P}\left[#1\right]}
\newcommand{\iid}{\text{i.i.d.}}
\newcommand{\Expe}[1]{\mathbb{E}\left[ #1\right] }
\newcommand{\Exp}{\mathbb{E}}
\newcommand{\wt}{\widetilde}
\newcommand\thickbar[1]{\accentset{\rule{.4em}{.8pt}}{#1}}
\def\threebars{|\!|\!|}
\def\thrb{\big|\!\big|\!\big|}
\def\thrB{\Big|\!\Big|\!\Big|}
\def\thrbb{\bigg|\!\bigg|\!\bigg|}
\def\thrBB{\Bigg|\!\Bigg|\!\Bigg|}
\newcommand\nn[3][ ]{%
\ifthenelse{\equal{#1}{ }}{\|{#3}\|_{#2}}{}%
\ifthenelse{\equal{#1}{b}}{\bigl\|{#3}\bigr\|_{#2}}{}%
\ifthenelse{\equal{#1}{B}}{\Bigl\|{#3}\Bigr\|_{#2}}{}%
\ifthenelse{\equal{#1}{bb}}{\biggl\|{#3}\biggr\|_{#2}}{}%
\ifthenelse{\equal{#1}{BB}}{\Biggl\|{#3}\Biggr\|_{#2}}{}}%
\newcommand\nnn[3][ ]{%
\ifthenelse{\equal{#1}{ }}{\threebars{#3}\threebars_{#2}}{}%
\ifthenelse{\equal{#1}{b}}{\mathopen\thrb{#3}\mathclose\thrb_{#2}}{}%
\ifthenelse{\equal{#1}{B}}{\mathopen\thrB{#3}\mathclose\thrB_{#2}}{}%
\ifthenelse{\equal{#1}{bb}}{\mathopen\thrbb{#3}\mathclose\thrbb_{#2}}{}%
\ifthenelse{\equal{#1}{BB}}{\mathopen\thrBB{#3}\mathclose\thrBB_{#2}}{}}%
\newcommand{\Holder}{H\"{o}lder }
\newcommand{\Levy}{L\'{e}vy}
\def\mop#1{\mathop{\hbox{\textrm{#1}}}\nolimits}
\def\moplim#1{\mathop{\hbox{\textrm{#1}}}}
\def\harr#1#2{\smash{\mathop{\hbox to .5in{\rightarrowfill}}
        \limits^{\scriptstyle#1}_{\scriptstyle#2}}}
\newcommand*{\relrelbarsep}{.386ex}
\newcommand*{\relrelbar}{%
  \mathrel{%
    \mathpalette\@relrelbar\relrelbarsep
  }%
}
\newcommand*{\@relrelbar}[2]{%
  \raise#2\hbox to 0pt{$\m@th#1\relbar$\hss}%
  \lower#2\hbox{$\m@th#1\relbar$}%
}
\providecommand*{\rightrightarrowsfill@}{%
  \arrowfill@\relrelbar\relrelbar\rightrightarrows
}
\providecommand*{\leftleftarrowsfill@}{%
  \arrowfill@\leftleftarrows\relrelbar\relrelbar
}
\providecommand*{\xrightrightarrows}[2][]{%
  \ext@arrow 0359\rightrightarrowsfill@{#1}{#2}%
}
\providecommand*{\xleftleftarrows}[2][]{%
  \ext@arrow 3095\leftleftarrowsfill@{#1}{#2}%
}
\def\@boxsqr#1#2#3{\vbox to0pt{
\fboxsep0pt%
\definecolor{tempc}{rgb}{#3}%
\colorbox{tempc}{\vbox to #2{\vss\hbox to #1{\hss}}}%
}}
\long\def\fillbox{\@ifnextchar[{\@fillbox{}{}}{\@fillbox{}{}[0.5mm,0.5mm]}}
\long\def\mfillbox{\@ifnextchar[%
	{\@fillbox{$\displaystyle\bgroup}{\egroup$}}%
	{\@fillbox{$\displaystyle\bgroup}{\egroup$}[0.5mm,0.5mm]}%
}
\long\def\@fillbox#1#2[#3,#4]#5#6{%
	\leavevmode%
  \setbox\@tempboxa\hbox{#1#6#2}%
  \@tempdima\wd\@tempboxa%
  \@tempdimb\ht\@tempboxa%
  \@tempdimc\dp\@tempboxa%
  \advance\@tempdima#3\advance\@tempdima#3%
  \advance\@tempdimb#4\advance\@tempdimb#4\advance\@tempdimb\@tempdimc%
  \advance\@tempdimc#4%
  \setbox\@tempboxa\hbox{%
	\@boxsqr{\number\@tempdima}{\number\@tempdimb}{#5}%
	\kern#3\raise\@tempdimc\box\@tempboxa}%
  \kern-#3\raise-\@tempdimc\box\@tempboxa%
  }
\let\phi\varphi
\def\rho{\varrho}
\def\doncl{\quad\Leftrightarrow\quad}
\def\ie{{\it i.e.}\@ifnextchar,{}{\ }}
\def\eg{{\it e.g.}\@ifnextchar,{}{\ }}
\def\({\left(}
\def\){\right)}
\newcommand{\para}[1]{\left(#1\right) }
\newcommand{\spara}[1]{\left[ #1\right] }
\newcommand{\maxp}[1]{\max\left(#1\right) }
\newcommand{\minp}[1]{\min\left(#1\right) }
\newcommand\cinf[1][ ]{\ifthenelse{\equal{#1}{ }}{{\cal C}^\infty}{{\cal C}^\infty(#1)}}
\newcommand\cOinf[1][ ]{\ifthenelse{\equal{#1}{ }}{{\cal C}_0^\infty}{{\cal C}_0^\infty(#1)}}
\def\L^#1{{\textrm L}^{\!#1}}
\def\newop#1{\expandafter\def\csname#1\endcsname{\mop{#1}}}
\def\newoplim#1{\expandafter\def\csname#1\endcsname{\moplim{#1}}}
\def\liminf{\moplim{lim inf}}
\def\limsup{\moplim{lim sup}}
\newcommand{\e}{\varepsilon}
\def\CU{{\mathcal{U} } }
 \def\T{\mathbf{T}}
 \def\E{\mathbf{E}}
 \newcommand{\tor}{\text{ \normalfont{or} }}
 \newcommand{\tand}{\text{ \normalfont{and} }}
 \newcommand{\tif}{\text{ \normalfont{if} }}
 \newcommand{\tifc}{\text{ ~,\normalfont{if}~}}
 \newcommand{\tfor}{\text{ \normalfont{for} }}
  \newcommand{\twhen}{\text{ \normalfont{when} }}
 \newcommand{\tforsome}{\text{ for some }}
 \newcommand{\twith}{\text{ with }}
 \newcommand{\tas}{\text{ as }}
 \newcommand{\tthen}{\text{ then }}
 \newcommand{\mb}[1]{\mathbb{#1}}
  \newcommand{\longequal}{=\joinrel=}
\newcommand{\tcwhen}{\text{~,\normalfont{when}~}}
\newcommand{\sectm}[1]{\texorpdfstring{#1}{OOES}}
\newcommand{\branchmat}[1]{ \left\{\begin{matrix}
 #1
\end{matrix}\right. }
 \newenvironment{eqalign}{\begin{equation}\begin{aligned}}{\end{aligned}\end{equation}}
\providecommand{\customgenericname}{}
\newcommand{\newcustomtheorem}[2]{%
  \newenvironment{#1}[1]
  {%
   \renewcommand\customgenericname{#2}%
   \renewcommand\theinnercustomgeneric{##1}%
   \innercustomgeneric
  }
  {\endinnercustomgeneric}
}
 \newcommand{\ba}{\[\begin{aligned}}
\newcommand{\ea}{\end{aligned}\]}
 \newcommand{\ben}{\begin{enumerate}}
 \newcommand{\een}{\end{enumerate}}
 \newcommand{\bena}{\begin{enumerate}[label={\bf (\alph*) } ]}
 \newcommand{\benr}{\begin{enumerate}[\textbf{i.}]}
\newcommand{\ind}[1]{\chi\left\{#1 \right\}}
 \newcommand{\lii}[2][\infty]{\lim\limits_{#2\to #1}}
\newcommand{\liz}[2][0]{\lim\limits_{#2\to #1}}
 \newcommand{\liml}[1]{\lim\limits_{#1} } 
 \newcommand{\supl}[1]{\sup\limits_{#1}}
 \newcommand{\supls}[1]{\sup\limits_{\substack{#1}}}
\newcommand{\infl}[1]{\inf\limits_{#1}}
\newcommand{\infp}[1]{\inf\left\{#1\right\} }
  \newcommand{\maxl}[1]{\max\limits_{#1}}
\newcommand{\codis}{\stackrel{d}{\to}}
\newcommand{\eqdis}{\stackrel{d}{=}}
\newcommand{\diam}{\mathrm{diam}  }
\newcommand{\dint}{\mathrm{d}  }
\newcommand{\dx}{\mathrm{d}x  }
\newcommand{\dy}{\mathrm{d}y  }
\newcommand{\dt}{\mathrm{d}t  }
\newcommand{\du}{\mathrm{d}u  }
\newcommand{\ds}{\mathrm{d}s  }
\newcommand{\dr}{\mathrm{d}r  }
\newcommand{\deta}{\mathrm{d}\eta  }
\newcommand{\dtheta}{\mathrm{d}\theta  }
\newcommand{\dlambda}{\mathrm{d}\lambda  }
\newcommand{\etam}[1]{\eta \left(\left[ #1\right]\right) }
\newcommand{\etamu}[2]{\eta^{#2} \left(\left[ #1\right]\right) }
\newcommand{\etamul}[3]{\eta^{#2}_{#3}\left(\left[ #1\right]\right) }
  \newcommand{\prodnk}[1]{\prod_{k=0}^{n}}
\newcommand{\normthrb}[1]{{\left\vert\kern-0.25ex\left\vert\kern-0.25ex\left\vert #1 
    \right\vert\kern-0.25ex\right\vert\kern-0.25ex\right\vert}}
\providecommand{\vrectangle}{\mkern1mu\mathpalette\v@rectangle\relax\mkern1mu}
\newcommand{\v@rectangle}[2]{%
  \hbox{
  \fboxrule=0.5\fontdimen 8
    \ifx#1\displaystyle\textfont\else
    \ifx#1\textstyle\textfont\else
    \ifx#1\scriptstyle\scriptfont\else
    \scriptscriptfont\fi\fi\fi 3
  \fboxsep=-\fboxrule
  \fbox{$\m@th#1\phantom{(}$}%
  }
}
\newcommand{\expo}[1]{\exp\left\{ #1\right\}}
\newcommand{\ceil}[1]{\lceil#1 \rceil }
\DeclareRobustCommand{\cev}[1]{%
  \mathpalette\do@cev{#1}%
}
\newcommand{\do@cev}[2]{%
  \fix@cev{#1}{+}%
  \reflectbox{$\m@th#1\vec{\reflectbox{$\fix@cev{#1}{-}\m@th#1#2\fix@cev{#1}{+}$}}$}%
  \fix@cev{#1}{-}%
}
\newcommand{\fix@cev}[2]{%
  \ifx#1\displaystyle
    \mkern#23mu
  \else
    \ifx#1\textstyle
      \mkern#23mu
    \else
      \ifx#1\scriptstyle
        \mkern#22mu
      \else
        \mkern#22mu
      \fi
    \fi
  \fi
}
\newcommand{\EE}{\ensuremath{\mathbb{E}}}
\newcommand{\setword}[2]{%
  \phantomsection
  #1\def\@currentlabel{\unexpanded{#1}}\label{#2}%
}
\let\old@rule\@rule
\def\@rule[#1]#2#3{\textcolor{black}{\old@rule[#1]{#2}{#3}}}
\begin{document}

\title[Article Title]{Inverse of the Gaussian multiplicative chaos: Moments}


\author*[1]{\fnm{Ilia} \sur{Binder}}\email{ilia@math.utoronto.ca}

\author*[1]{\fnm{Tomas} \sur{Kojar}}\email{tomas.kojar@mail.utoronto.ca}

\affil*[1]{\orgdiv{Math Department}, \orgname{University of Toronto}, \orgaddress{\street{40 St. George Street}, \city{Toronto}, \postcode{M5S 2E4}, \state{Ontario}, \country{Canada}}}


\abstract{In this article, we systematically study the general properties and single-point moments of the inverse of Gaussian multiplicative chaos.}

\keywords{Gaussian multiplicative chaos, Log-correlated Gaussian fields}


\pacs[MSC Classification]{Primary 60G57; Secondary 60G15}

\maketitle
\tableofcontents

\part{Introduction}
This article introduces the systematic study of the inverse of the Gaussian multiplicative chaos (GMC). 
It is also the first part of the project of extending the work in \citep{AJKS} (see \citep{binder2023inverselehto}). Let us start with an informal discussion of our objects. We will give  precise definitions and statements later on (see also \citep{AJKS,bacry2003log}). Even though the initial motivation is a conformal welding problem mentioned below, we believe this article helps open up the study of an important foundational aspect of GMC. \\
We focus on the GMC on the real line. Namely, we consider a Gaussian field $U(t)$ on the real line $[0,\infty)$ with logarithmic covariance $\Expe{U(t)U(s)}=-\ln\abs{t-s}$, the corresponding GMC measure $\eta$
\begin{equation}
\eta(I):=\liz{\e}\int_{I}e^{\gamma U_{\e}(x)-\frac{\gamma^{2}}{2}\Expe{\para{U_{\e}(x)}^{2}}}\dx, I\subset [0,\infty)
\end{equation}
and its inverse $Q=Q_{\eta}$ i.e. $Q(\eta(x))=x=\eta(Q(x))$ (see precise definitions below). We try to translate the known properties of GMC to its inverse. For example, does the inverse have a scaling law? Does it have a density? Does it satisfy any decoupling properties? What are some bounds for its moments? We go through all these questions in this article and an ensuing one that will focus on decoupling.
\pparagraph{Inverse conformal welding}Here we go over the motivation that led to the study of the inverse. The Gaussian random field $H$ is the random Gaussian field on the circle with the covariance
\begin{equation}
\Expe{H(z)H(z')}=-\ln\abs{z-z'},
\end{equation}
where $z, z'\in\mathbb{C}$ have modulus $1$.  The exponential $\expo{\gamma H}$ gives rise to a random measure $\tau$ on the unit circle $\T$, given by
\begin{equation}
\tau(I):=\liz{\e}\int_{I}e^{\gamma H_{\e}(x)-\frac{\gamma^{2}}{2}\Expe{\para{H_{\e}(x)}^{2}}}\dx,
\end{equation}
for Borel subsets $I\subset \T=\mathbb{R}/ \mathbb{Z}=[0,1)$ and $H_{\e}$ is a suitable regularization. This measure is within the family of \textit{Gaussian multiplicative chaos} measures (GMC) (for expositions see the lectures \citep{robert2010gaussian,rhodes2014gaussian}). So finally, in \citep{AJKS} they consider the random homeomorphism $h:[0,1)\to [0,1)$ defined as the normalized measure
\begin{equation}
h(x):=\frac{\tau[0,x]}{\tau[0,1]}, x\in [0,1),
\end{equation}
and prove that it gives rise to a Beltrami solution and a \textit{conformal welding } map. The goal is to extend this result to its inverse $h^{-1}$ and then to the composition $h_{1}^{-1}\circ h_{2}$ where $h_{1},h_{2}$ are two independent copies. The motivation for that is for obtaining a parallel point of view of the beautiful work by Sheffield \citep{sheffield2016conformal} of gluing two quantum disks to obtain an SLE loop. \\
We let $Q_{\tau}(x):[0,\tau([0,1])]\to [0,1]$ denote the inverse of the measure $\tau:[0,1]\to [0,\tau([0,1])]$ i.e.
\begin{equation}
Q_{\tau}(\tau[0,x])=x\tand \tau[0,Q_{\tau}(y)]=y,
\end{equation}
for $x\in [0,1]$ and $y\in [0,\tau([0,1])].$  The existence of the inverse $Q$ follows from GMC $\tau$ being non-atomic \cite[theorem 1]{bacry2003log} and strictly monotone. The monotonicity follows because $\eta$ satisfies bi-\Holder over dyadic intervals $I$ \cite[theom 3.7]{AJKS}
\begin{eqalign}
c_{1}\abs{I}^{a_{1}}\leq \tau(I) \leq c_{2}\abs{I}^{a_{2}}.   
\end{eqalign}
This also implies continuity for the inverse $Q$.\\ We use the notation Q because the measure $\tau$ can be thought of as the "CDF function" for the "density" $\expo{\gamma H}$ and thus its inverse $\tau^{-1}=Q$ is the quantile (also using the notation $\tau^{-1}$ would make the equations less legible later when we start including powers and truncations). We will also view this inverse as a hitting time for the function $t\mapsto\tau([0,t])$
\begin{equation}
Q_{\tau}(x)=T_{x}:=\inf\set{t\geq 0: \tau[0,t]\geq x}.
\end{equation}
The inverse homeomorphism map $h^{-1}:[0,1]\to [0,1]$ is defined as
\begin{equation}\label{inversehomeo}
h^{-1}(x):=Q_{\tau}(x\tau([0,1]))\tfor x\in [0,1]
\end{equation}
For the map $h(x)$ we have the periodic extension $h(1+x)=1+h(x)$ for $x\in \mb{R}$ and so by solving for x its inverse will satisfy the same periodicity $h^{-1}(x+1)=h^{-1}(x)+1$.

\proofparagraph{Known results on the inverse}\\
 The spectrum for various multifractal inverses is computed in the papers \citep{riedi1997inversion,mandelbrot1997inverse,barral2009singularity,olsen2010inverse}. One main result is the \textit{inverse formula} for the dimension of the spectrum \citep{riedi1997inversion}. Let
\begin{equation}
T_{\eta,a}:=\set{t\in [0,1]: \liml{\delta\to 0}\frac{\ln(\eta([t,t+\delta]))}{\ln(\delta)}=a    }.
\end{equation}
From \cite[corrolary 7]{riedi1997inversion} we have the inversion formula for $Q$ the inverse of $\eta$
\begin{equation}
\dim(T_{Q,a})=a\dim(T_{\eta,\frac{1}{a}}).
\end{equation}
For the GMC measure $\eta$ we have the Legendre transform formula \cite[section 4.2]{rhodes2014gaussian},\citep{barral1999moments,barral2007singularity}
\begin{equation}
\dim(T_{\eta,a})=\inf\limits_{q\in \R}\set{aq-\tau_{\eta}(q)},
\end{equation}
where we let
\begin{equation}
\tau_{\eta}(q):=\branchmat{(1+\frac{\gamma}{\sqrt{2}})^{2}q& \tifc q\leq \frac{-\sqrt{2}}{\gamma}\\
\zeta_{\delta}(-q)-1& \tifc q\in \spara{ \frac{-\sqrt{2}}{\gamma}, \frac{\sqrt{2}}{\gamma}} \\
(1-\frac{\gamma}{\sqrt{2}})^{2}q& \tifc  \frac{\sqrt{2}}{\gamma}\leq q  }
\end{equation}
and the moment exponent $\zeta(q):=q-\frac{\gamma^{2}}{2}(q^{2}-q)$ \citep{bacry2003log}. Therefore, for the inverse we have
\begin{equation}
\dim(T_{Q,a})=a\infl{q\in \R}\set{\frac{q}{a}-\tau_{\eta}(q)}. \end{equation}
The inverse also shows up when studying the Liouville Brownian motion as the time-change ("clock") (see \citep{berestycki2015diffusion}): consider 2d Brownian motion $B_t$ independent of the logarithmic Gaussian field $U$, then they work with the "clock"/quadratic-variation
\begin{equation}
\mu^{-1}(a):=\inf\set{t\geq 0: \int_{0}^{t}e^{U(B_{s})}\ds\geq a  }
\end{equation}
and define the \textbf{Liouville Brownian motion} to be the time-changed $\wt{B}_{a}:=B_{\mu^{-1}(a)}$.
\section{Outline and Main results}
 Since the inverse of GMC didn't seem to appear in other problems, it was studied very little and so we had to find and build many of its properties. Our guide for much for this work was:
\begin{itemize}
    \item  trying to translate and transfer the known properties of the GMC measure to its inverse,

    \item the Markovian structure for the hitting times of Brownian motion's (such as the Wald's equation and the independent of the increments of hitting times),

\item and finally trying to get whatever property was required for the framework set up by \citep{AJKS} to go through successfully.
\end{itemize}
This was a situation where a good problem became the roadmap for finding many interesting properties for the inverse of GMC and thus GMC itself. Here is an outline of each of the sections of this article:

\begin{itemize}
    \item In \Cref{part:generalpropertiesinverse}, we find the inverse analogues of the known scaling laws (which imply a density formula too for the inverse) and comparison formulas for the inverse between different fields as done in \cite[lemma 3.5]{AJKS} for GMC measures.

 \item In \Cref{part:momentinverse}, for the field $U$ on the real line, we compute in detail the moments for increments $Q(a,b):=Q(b)-Q(a)$ for general $a<b$. The challenge here is the lack of translation invariance $Q(a,b)\stackrel{d}{\neq}Q(0,b-a)$.

    \item In \Cref{part:rateofconvergence}, we study the rate of convergence of the lower truncated inverse
    \begin{eqalign}
     \Expe{\abs{Q_{n+m}(x)-Q_{n}(x)}^{\ell}}\to 0 \tas n\to +\infty,
    \end{eqalign}
where $Q_{n}$ corresponds to the lower-truncated field $U_{\delta_{n}}^{1}$ for some sequence $\delta_{n}\to 0$ that we will discuss later.

    \item In the appendix, we prove GMC estimates that we need for this work.

\end{itemize}
\subsection{Main results}
The main result is the study of the moments for a)the inverse $Q(x)=Q_{U}(x)$ and b)the inverse increments $Q(a,b)=Q_{U}(b)-Q_{U}(a)$ for the field $U$ on the real line. Let $\beta:=\frac{\gamma^{2}}{2}$.
\begin{theorem}
The inverse has finite moments $\Exp[(Q^{\delta}(0,x))^{p}]<\infty $ for the interval
\begin{eqalign}
p\in \para{-\frac{(1+\beta)^{2}}{4\beta}, \infty},  \end{eqalign}
where the left endpoint is given by the supremum of the moment exponent of the $\eta$ measure:
\begin{eqalign}
\frac{(1+\beta)^{2}}{4\beta}=\supl{q\in (0,\frac{1}{\beta})}\zeta(q),
\end{eqalign}
where $\zeta(q):=q-\frac{\gamma^{2}}{2}(q^{2}-q)$ is the multifractal exponent.
\begin{itemize}

\item Positive moments: For general non-negative moments $p\geq 0$ and constants $\delta,x\geq 0$, we have the upper bound for $Q^{\delta}$:

\begin{eqalign}
\Exp[(Q^{\delta}(0,x))^{p}]\lesssim\branchmat{\delta^{p-q} x^{q}+\delta^{p-\wt{q}}x^{\wt{q}}&\delta\geq 1\\ x^{q}+x^{\wt{q}} &\delta \leq 1},
\end{eqalign}
where  $p,q,\wt{q}$ satisfy $q+\beta(q^{2}+q)<p<\wt{q}$.
\item Negative moments: For $p\geq 0$ and $\delta,x\geq 0$, we have the upper bound for $Q^{\delta}$:
\begin{eqalign}
\Expe{(Q^{\delta}(0,x))^{-p}}\lesssim \branchmat{\delta^{q-p} x^{-q}+\delta^{\wt{q}-p}x^{-\wt{q}}&\delta\geq 1\\ x^{-q}+x^{-\wt{q}}&\delta\leq 1 }.
\end{eqalign}
When $\delta\geq 1$ $p,q,\wt{q}$ satisfy the following: when $q,\wt{q}\in (0,1)$, we have $\zeta(q):=q-\beta(q^{2}-q)>p>\zeta(\wt{q})$ and when $q,\wt{q}\in (1, \frac{1}{\beta})$, we have $\zeta(q)>p>\wt{q}$. \\
When $\delta\leq 1$
and $q,\wt{q}\in (1,\frac{1}{\beta})$, we have the same constraint $\zeta(q)>p>\wt{q}$ but when  $q,\wt{q}\in (0,1)$, we have $\zeta(q)>p>\zeta(\wt{q})$.
\end{itemize}
\end{theorem}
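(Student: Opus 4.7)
The plan is to exploit the hitting-time representation $Q^{\delta}(0,x)=\inf\{t\geq 0:\tau^{\delta}[0,t]\geq x\}$, which converts events on $Q^{\delta}$ into complementary events on the truncated measure $\tau^{\delta}$:
\[\{Q^{\delta}(0,x)>t\}=\{\tau^{\delta}[0,t]<x\},\qquad \{Q^{\delta}(0,x)<t\}=\{\tau^{\delta}[0,t]>x\}.\]
Combined with the layer-cake formula, this reduces both the positive- and negative-moment bounds to integrating tail probabilities of $\tau^{\delta}$, which are in turn controlled via Markov's inequality and the truncated-GMC moment estimates collected in the appendix.

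For the positive moments I would write
\[\Expe{(Q^{\delta}(0,x))^{p}}=p\int_{0}^{\infty}t^{p-1}\Proba{\tau^{\delta}[0,t]<x}\,dt,\]
and bound $\Proba{\tau^{\delta}[0,t]<x}\leq x^{r}\Expe{(\tau^{\delta}[0,t])^{-r}}$ with the exponent $r>0$ chosen differently in the two regions $t\lesssim\delta$ and $t\gtrsim\delta$. All negative moments of the GMC are finite and satisfy $\Expe{(\tau^{\delta}[0,t])^{-r}}\lesssim t^{\zeta(-r)}$ up to a truncation-dependent factor, where $\zeta(-r)=-r-\tfrac{\gamma^{2}}{2}(r^{2}+r)$; integrability near $0$ then forces $p+\zeta(-q)>0$, i.e.\ $q+\tfrac{\gamma^{2}}{2}(q^{2}+q)<p$, which picks the small-$t$ exponent $q$, while at $\infty$ it suffices to have $\wt q>p$ together with the decay of $t^{\zeta(-\wt q)}$. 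Matching the $\delta$-prefactors at the splitting point produces the claimed $\delta^{p-q}x^{q}+\delta^{p-\wt q}x^{\wt q}$, and the $\delta\leq 1$ case follows by parallel bookkeeping.

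For the negative moments I would use the complementary layer-cake
\[\Expe{(Q^{\delta}(0,x))^{-p}}=p\int_{0}^{\infty}t^{-p-1}\Proba{\tau^{\delta}[0,t]>x}\,dt,\]
and apply Markov with a \emph{positive} moment, $\Proba{\tau^{\delta}[0,t]>x}\leq x^{-r}\Expe{(\tau^{\delta}[0,t])^{r}}\lesssim x^{-r}t^{\zeta(r)}$. This is only legal in the subcritical range $r\in(0,2/\gamma^{2})$, which is exactly where the hypotheses place $q,\wt q$; the condition $\zeta(q)>p$ is what makes the small-$t$ integral converge. The two sub-cases $q,\wt q\in(0,1)$ versus $q,\wt q\in(1,2/\gamma^{2})$ track the change in the behavior of $\zeta$ at $1$: below $1$ the multifractal scaling controls both ends and yields $\zeta(q)>p>\zeta(\wt q)$, while above $1$ the linear part of $\zeta$ dominates the large-$t$ tail, giving the asymmetric $\zeta(q)>p>\wt q$.

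The main obstacle will be the careful bookkeeping of the truncation-dependent prefactors: $\tau^{\delta}$ behaves essentially like a smooth measure governed by the Gaussian tails of $H_{\delta}$ for $t\leq\delta$, while for $t\geq\delta$ the multifractal scaling $t^{\zeta(r)}$ takes over, and matching these regimes at $t\asymp\delta$ while ensuring the $\delta$-exponents come out cleanly as $\delta^{p-q}$ requires sharp versions of the appendix estimates. A secondary delicate point is that the boundary value $(1+\gamma^{2}/2)^{2}/(2\gamma^{2})=\supl{q\in(0,2/\gamma^{2})}\zeta(q)$ is the first threshold beyond which no admissible $q$ satisfies $\zeta(q)>p$; approaching it requires controlling how the implicit constants degenerate as $q\to 2/\gamma^{2}$.
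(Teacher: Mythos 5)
Your proposal follows essentially the same route as the paper's proof: invert $\{Q^{\delta}(0,x)\geq t\}=\{x\geq\eta^{\delta}[0,t]\}$, apply the layer-cake formula, bound the tails of $\eta^{\delta}$ by Markov with two separate exponents $q,\wt{q}$ on the small-$t$ and large-$t$ regions, and invoke the truncated-GMC moment estimates (with the $\delta$-prefactors handled by the scaling law $Q^{\delta}(x)\eqdis\delta Q^{1}(x/\delta)$, which is equivalent to your matching at the splitting point). The constraints you extract ($q+\tfrac{\gamma^{2}}{2}(q^{2}+q)<p<\wt{q}$ for positive moments, $\zeta(q)>p$ with the $(0,1)$ versus $(1,2/\gamma^{2})$ dichotomy for negative moments) coincide with the paper's, so the plan is correct and not materially different.
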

\begin{remark}
One can obtain precise exponents by optimizing. From
\begin{eqalign}
&q+\beta(q^{2}+q)<p ,   \\
\tand &q-\beta(q^{2}-q)>p
\end{eqalign}
we solve to get
\begin{eqalign}
&q<\frac{1}{2}\para{\sqrt{1+\frac{4p+2}{\beta}+\frac{1}{\beta^{2}}  }-1-\frac{1}{\beta}   }  ,\end{eqalign}
and
\begin{eqalign}
q>    \frac{1}{2}\para{\sqrt{1+\frac{-4p+2}{\beta}+\frac{1}{\beta^{2}}  }-1-\frac{1}{\beta}   }
\end{eqalign}
respectively.
\end{remark}
 For the increments we only obtain concrete bounds for a restricted range on the moments.
\begin{theorem}
Let $p\in \left(-\frac{(1+\beta)^{2}}{4\beta}, \infty \right)$, the height $\delta\leq 1$ and $a,x>0$.
\begin{itemize}
    \item (Positive moments) For all $p>0$
\begin{eqalign}
\Expe{(Q^{\delta}(a,a+x))^{p}}\leq& c_{1}x^{p_{1}}C_{pos}(a,\delta)+c_{2}x^{p_{2}},
\end{eqalign}
for $p_{1},p_{2}$ constrained as 
\begin{equation}
 \zeta(-p_{1})+p>1\tand p_{2}>p.   
\end{equation}

\item (Negative moments) For all  $p\in \left(0,\frac{(1+\beta)^{2}}{4\beta}\right)$
\begin{eqalign}
\Expe{(Q^{\delta}(a,a+x))^{-p}}\leq& \tilde{c}_{1}x^{-\tilde{p}_{1}}+\tilde{c}_{2}x^{-\tilde{p}_{2}},
\end{eqalign}
for $\tilde{p}_{1},\tilde{p}_{2}$ constrained as
\begin{equation}
\zeta(\tilde{p}_{1})>p+1    \tand     p>\tilde{p}_{2}.
\end{equation}
\end{itemize}
The constants dependence is
\begin{eqalign}
C_{pos}(a,\delta)&:=\maxp{\delta^{\zeta(-p_{1})+p-2},C_{inf,1}(\alpha,\delta,\rho),C_{inf,2}(\alpha,\delta,\rho) }   \\
C_{neg}(a,\delta)&:=C_{sup}(a,\delta,\rho)
\end{eqalign}
with the constants $C_{inf,1}$ , $C_{inf,2}$,$C_{sup}$ coming from \Cref{prop:shiftedGMCmoments}. 
\end{theorem}
At the end we also study the decay rate of $\Proba{\abs{Q_{n+m}(x)-Q_{n}(x)}\geq r}$ and of the $L^{\ell}$-difference \\
$\Expe{\abs{Q_{n+m}(x)-Q_{n}(x)}^{\ell}}$ for $\ell\in (1,2)$, $x\in [0,1] ,r>0$ as $n\to +\infty$. Using the Cauchy-criterion  we will obtain that $Q$ is the $L^{\ell}$-Cauchy limit of $Q_{n}$. This is analogous to studying the rate of $L^{2}$-convergence for GMC measures $\eta_{n}\stackrel{L^{2}}{\to} \eta$ which is known to be  $\Expe{(\eta_{\e}(1)-\eta_{\e/2}(1))^{2}}\leq c\e^{2-\gamma^{2}}$ \cite{berestycki2021gaussian,rhodes2015lectures}. Extending it to $\ell=2$, will likely require the use of good/bad thick points as in the GMC case.
\begin{theorem}[Cauchy sequence]Fix $\gamma<\frac{1}{12\sqrt{2}}$, $\delta>0$ and let any strictly decreasing sequence $\delta_{n}=2^{\delta n}$.
 Fix an interval $[\e,M]$ for some $\e,M>0$ and $x\in [\e,M]$. Then the sequence  $\set{Q_{n}(x)}_{n\geq 1}$ is $L^{\ell}$-Cauchy for any $\ell\in (1,2)$
\begin{eqalign}
Q_{n}(x)\stackrel{L^{\ell}}{\to}Q(x),
\end{eqalign}
where $Q_{n}$ corresponds to the lower-truncated field $U_{\delta_{n}}^{1}$.
\end{theorem}
\begin{remark}
The constrain $\gamma<\frac{1}{12\sqrt{2}}$ originates from needing to use \cref{prop:maxmoduluseta} for $p\in (0,1)$.  
\end{remark}
\subsection{Acknowledgements}
We firstmost thank Eero Saksman and Antti Kupiainen. We had numerous useful discussions over many years. Eero asked us to compute the moments of the inverse and that got the ball rolling seven years ago. We also thank J.Aru, J.Junnila and V.Vargas for their discussions and comments.

\section{Notations and GMC results }\label{notations}
\subsection{White noise expansion on the unit circle and real line }
Gaussian fields with logarithmic covariance indexed over time or even functions $f\in \mathbb{H}$, for some Hilbert space $\mathbb{H}$, have appeared in many places in the literature: for the \textit{Gaussian free field} which is a Gaussian field indexed over the $L^{2}$ Hilbert space equipped with the Dirichlet inner product \cite{sheffield2007gaussian,lodhia2016fractional,duplantier2017log} and for the \textit{Gaussian multiplicative chaos} which is about Gaussian fields with logarithmic covariance plus some continuous bounded function $\ln\frac{1}{\abs{x-y}}+g(x,y)$ \cite{robert2010gaussian,bacry2003log,rhodes2014gaussian,aru2020gaussian}. In this work we will work with the Gaussian field indexed over sets in the upper half-plane found in the works \cite{barral2002multifractal,bacry2003log} and used in the random welding context in \cite[section 3.2]{AJKS}. For the hyperbolic measure $\dlambda(x,y):=\frac{1}{y^{2}}\dx \dy$ in the upper half-plane $\uhp$ we consider a Gaussian process $\set{W(A)}_{A\in B_{f}(\uhp)}$ indexed by Borel sets of finite hyperbolic area:
\begin{eqalign}
    B_{f}(\uhp):=\{A\subset \uhp: \lambda(A)<\infty; \supl{(x,y),(x',y')\in A}|x-x'|<\infty\}
\end{eqalign}
with covariance
\begin{eqalign}
    \Expe{W(A_{1})W(A_{2})}:=\lambda(A_{1}\cap A_{2}).
\end{eqalign}
Its periodic version $W_{per}$ has covariance
\begin{eqalign}
\Expe{W_{per}(A_{1})W_{per}(A_{2})}:=\lambda(A_{1}\cap \bigcup_{n\in \Z} (A_{2}+n)).
\end{eqalign}
\subsubsection*{Logarithmic field on the real line}Consider the triangular region
\begin{eqalign}
\mathcal{S}:=\{(x,y)\in \uhp: x\in [-1/2,1/2]\tand y>2\abs{x}\}
\end{eqalign}
and define the field on the real line
\begin{eqalign}
U(x):=W(\mathcal{S}+x)\tfor x\in \mb{R}.
\end{eqalign}
It has logarithmic covariance (\cite{bacry2003log})
\begin{eqalign}
\Exp[U(x)U(y)]=\ln\para{\frac{1}{\min(\abs{x-y},1)}}.    
\end{eqalign}
This is the main field we will work with throughout this article. Because of the divergence along the diagonal, we upper and lower truncate by considering the field evaluated over shifts of the region
\begin{eqalign}
\mathcal{S}_{\e}^{\delta}(x_{0}):=\{(x,y)\in \uhp: x\in [x_{0}-\frac{\delta}{2},x_{0}+\frac{\delta}{2}]\tand \para{2\abs{x_{0}-x}}\vee \e<y\leq \delta\}.
\end{eqalign}
and $\mathcal{S}_{\e}^{\delta}=\mathcal{S}_{\e}^{\delta}(0)$. The covariance of $U_{\e}^{\delta}(x)=W(\mathcal{S}_{\e}^{\delta}+x)$ is the following (\cite{bacry2003log}).
\begin{lemma}\label{linearU}
The truncated covariance satisfies for $\delta\geq \e$ and all $x_{1},x_{2}\in \mathbb{R}$:
\begin{eqalign}\label{eq:Ucovariance}
\Exp[U_{ \varepsilon}^{  \delta }(x_{1} )U_{ \varepsilon}^{  \delta }(x_{2} )  ]=R_{ \varepsilon}^{  \delta }(\abs{x_{1}-x_{2}}):=\left\{\begin{matrix}
\ln(\frac{\delta }{\varepsilon} )-\para{\frac{1}{\e}-\frac{1}{\delta}}\abs{x_{2}-x_{1}} &\tifc \abs{x_{2}-x_{1}}\leq \varepsilon\\
 \ln(\frac{\delta}{\abs{x_{2}-x_{1}}}) +\frac{\abs{x_{2}-x_{1}}}{\delta}-1&\tifc \e\leq \abs{x_{2}-x_{1}}\leq \delta\\
 0&\tifc  \delta\leq \abs{x_{2}-x_{1}}.
\end{matrix}\right.    .
\end{eqalign}
In the case of $\e=0$, we shorthand write $R^{  \delta }(\abs{x_{1}-x_{2}})=R_{0}^{  \delta }(\abs{x_{1}-x_{2}})$.
\end{lemma}
\begin{remark}
This implies that we have a bound for its difference:
\begin{eqalign}
\Expe{\abs{U_{\varepsilon_{1}}^{\delta}(x_{1})-U_{\varepsilon_{2}}^{\delta}(x_{2})}^{2}}\leq 2 \frac{|x_{2}-x_{1}|+|\varepsilon_{1}-\varepsilon_{2}|}{\varepsilon_{2}\wedge \e_{1}}.
\end{eqalign}
Therefore, being Gaussian this difference bound is true for any $a>1$
\begin{eqalign}
\Expe{|U_{\varepsilon_{1}}^{\delta}(x_{1})-U_{\varepsilon_{2}}^{\delta}(x_{2})|^{a}}\leq c\para{ \frac{|x_{2}-x_{1}|+|\varepsilon_{1}-\varepsilon_{2}|}{\varepsilon_{2}\wedge \e_{1}}     }^{a/2}.
\end{eqalign}
and so we can apply the Kolmogorov-Centsov lemma from \cite[Lemma C.1]{hu2010thick}:
\begin{eqalign}
|U_{\varepsilon_{1}}(x_{1})-U_{\varepsilon_{2}}(x_{2})|\leq M~ (\ln(\frac{1}{\varepsilon_{2}}))^{\zeta}\frac{(|x_{2}-x_{1}|+|\varepsilon_{1}-\varepsilon_{2}|)^{\gamma}}{ \varepsilon_{2}^{\gamma+\varepsilon} },
\end{eqalign}
where $\frac{1}{2}< \frac{\e_{1}}{\e_{2}}<2$, $0<\gamma<\frac{1}{2}$, $\varepsilon,\zeta>0 $ and $M=M(\varepsilon,\gamma,\zeta)$.
\end{remark}
Also, we can consider strictly increasing larger heights i.e. $U^{h}_{0}$ denotes the field of height $h\geq \delta_{1}$. The infinitely large field  $U^{\infty}_{0}$ does not exist because it has infinite variance.\\

This field is needed because it allows us to work on the entire real line and not worry about the boundary effects of working on the unit circle $[0,1]/ 0\sim 1$.

\subsubsection*{Infinite cone field}
For the region
\begin{eqalign}
\mathcal{A}_{\e}^{\delta}:=\{(x,y)\in \uhp: x\in [-\frac{\delta}{2},\frac{\delta}{2}]\tand \para{2\abs{x}}\vee \e<y\}
\end{eqalign}
we let $U_{\omega,\e}^{\delta}:=\omega_{\e}^{\delta}(x):=W(\mathcal{A}+x)$. This field was considered in \cite{bacry2003log}. It has the covariance
\begin{eqalign}\label{eq:exactscalinglogafieldcova}
\Exp[\omega_{ \varepsilon}^{  \delta }(x_{1} )\omega_{ \varepsilon}^{  \delta }(x_{2} )  ]=\left\{\begin{matrix}
\ln(\frac{\delta }{\varepsilon} )+1-\frac{\abs{x_{2}-x_{1}} }{\e}&\tifc \abs{x_{2}-x_{1}}\leq \varepsilon\\
 \ln(\frac{\delta}{\abs{x_{2}-x_{1}}}) &\tifc \e\leq \abs{x_{2}-x_{1}}\leq \delta\\
  0&\tifc  \delta\leq \abs{x_{2}-x_{1}}
\end{matrix}\right.    .
\end{eqalign}
This field is interesting because it has "exact scaling laws" as we will see below. And so it helps in estimating the moments for the GMC corresponding to $U$ from above.
\subsubsection*{Trace of the Gaussian free field on the unit circle}\label{sec:fieldonunitcircle}
For the wedge shaped region
\begin{eqalign}
H:=\{(x,y)\in \uhp: x\in [-1/2,1/2]\text{ and }y>\frac{2}{\pi}tan(\pi x)\},
\end{eqalign}
we define the \textit{trace GFF} $H$ to be
\begin{eqalign}
H(x):=W_{per}(H+x), x\in \R/\Z.
\end{eqalign}
A regularized version of $H$ comes from truncating all the lower scales. Let $A_{r,\varepsilon}:=\{(x,y)\in \uhp: r>y>\varepsilon\}$ and $H_{\varepsilon}^{r}:=H\cap A_{r,\varepsilon}$ then we set:
\begin{eqalign}
H_{\varepsilon}^{r}(x):=W(H_{\varepsilon}^{r}+x)
\end{eqalign}
and $H_{\varepsilon}:=h^{\infty}_{\varepsilon}$. Its covariance is as follows: For points $x,\xi\in [0,1)$, with 0 and 1 identified,  and $\varepsilon\leq r$ we find for $y:=|x-\xi|$:
\begin{eqalign}\label{eq:covarianceunitcircle}
\Exp[H_{\varepsilon}(x)H_{r}(\xi)]:=&\branchmat{2\log(2)+\log\frac{1}{2sin(\pi y)}  &\tifc y> \frac{2}{\pi} arctan(\frac{\pi}{2}\e)\\ \log(1/\e)+(1/2)\log(\pi^{2}\varepsilon^{2}+4)+\frac{2}{\pi}\frac{arctan(\frac{\pi}{2}\e)}{\varepsilon}&\tifc y\leq \frac{2}{\pi} arctan(\frac{\pi}{2}\e)\\
-\log(\pi)- y \varepsilon^{-1}-\log(cos(\frac{\pi}{2}y))&}.
\end{eqalign}
From the above covariance computations, we find the difference bound for $1/2\leq \frac{\e_{1}}{\e_{2}}\leq 2$:
\begin{eqalign}
\Exp[|H_{\varepsilon_{1}}(x_{1})-H_{\varepsilon_{2}}(x_{2})|^{2}]\lessapprox~\frac{|x_{2}-x_{1}|+|\varepsilon_{1}-\varepsilon_{2}|}{\varepsilon_{2}\wedge \e_{1} }
\end{eqalign}
Therefore, by Kolmogorov-Centsov \cite[Lemma C.1]{hu2010thick}, there is a continuous modification with the modulus:
\begin{eqalign}
|H_{\varepsilon_{1}}(x_{1})-H_{\varepsilon_{2}}(x_{2})|\leq M (\ln(\frac{1}{\varepsilon_{1}}))^{\zeta}\frac{(|x_{2}-x_{1}|+|\varepsilon_{1}-\varepsilon_{2}|)^{\gamma}}{ \varepsilon_{1}^{\gamma+\varepsilon} },
\end{eqalign}
where $0<\gamma<\frac{1}{2}$, $\varepsilon,\zeta>0 $ and $M=M(\varepsilon,\gamma,\zeta)$. A similar modulus is true for the field $U(x)$ on the real line constructed above.\\
This field is important because of the original problem being the conformal welding on the unit circle. So all our estimates need to return to the GMC with the field $H$.

\subsubsection*{Measure and Inverse notations}
 For a field $X$, let us define the corresponding Gaussian multiplicative chaos (GMC) as
\begin{eqalign}
\eta_{X}(I):=\liz{\e}\int_{I}e^{\overline{X}_{\e}(x)}\dx,\quad \overline{X}_{\e}:=\gamma X_{\e}-\frac{\gamma^{2}}{2}E[X_{\e}^{2}],\quad I\subset [0,1) ,
\end{eqalign}
provided the limit exists.

For the real-line field $U^{\delta}$ we use the following notation to match \cite{AJKS}:
\begin{eqalign}
\eta^{\delta}(A):=\liz{\e}\eta_{\e}^{\delta}(A):=\liz{\e}\int_{A}e^{\gamma U_{\e}^{\delta}(x)-\frac{\gamma^{2}}{2}\ln\frac{1}{\e}}\dx, \quad A\subset \mathbb{R}.
\end{eqalign}
Its inverse $Q^{\delta}:\Rplus\to \Rplus$ is defined as
\begin{eqalign}
Q^{\delta}(x)=Q^{\delta}_{x}=:=\infp{t\geq 0 : \eta^{\delta}\spara{0,t}\geq x}
\end{eqalign}
and we will also consider increments of the inverse over intervals $I:=(y,x)$
\begin{eqalign}
Q^{\delta}(I)=Q^{\delta}(y,x):=Q^{\delta}(x)-Q^{\delta}(y).
\end{eqalign}
For the inverse of the lower-truncated GMC $\eta_{\e}^{\delta}$, we will write
$Q^{\delta}_{\e,x}:=Q^{\delta}_{\e}(x).$

In the ensuing articles we will work with a sequence of truncations $\delta_{n}\to 0$, and so we will shorthand denote $U^{n}:=U^{\delta_{n}}$, $\eta^{n}$and $Q^{n}$. This is because as in \cite{AJKS}, we need to decouple nearby measures: to have the following two objects  
\begin{eqalign}
\eta^{\delta_{n+1}}(a_{n+1},b_{n+1})\tand \eta^{\delta_{n}}(a_{n},b_{n})    
\end{eqalign}
be independent with $a_{n+1}<b_{n+1}<a_{n}<b_{n}$ decreasing to zero, we simply require $b_{n+1}+\delta_{n+1}<a_{n}$. So we see that as $\delta_{n}\to 0$, this becomes easier to achieve. Whereas if we kept $\delta_{n}=\delta$, this would be false as $a_{n}\to 0$.\\
Similarly, when lower truncating $U_{n}:=U_{\delta_{n}}$, we will write $\eta_{n}$ and $Q_{n}$. By slight abuse of notation, we denote
\begin{eqalign}
U(s)\cap U(t):=W(\para{U+s}\cap \para{U+t})    \tand U(s)\setminus U(t):=W(\para{U+s}\setminus \para{U+t}).
\end{eqalign}
\subsubsection*{Semigroup formula}\label{not:semigroupformula}
In the spirit of viewing $Q_{a}=Q(a)$ as a hitting time we have the following semigroup formula and notation for increments
\begin{eqalign}
Q^{\delta}(y,x)=Q^{\delta}(x)-Q^{\delta}(y)=\infp{t\geq 0: \etamu{Q^{\delta}(y),Q^{\delta}(y)+t}{\delta}\geq x-y }=:Q_{x-y}^{\delta }\bullet Q_{y}^{\delta}.
\end{eqalign}
Generally for any nonnegative $T\geq 0$ we use the notation
\begin{eqalign}
Q_{x}^{\delta} \bullet T:=\infp{t\geq 0: \etamu{T,T+t}{\delta}\geq x }.
\end{eqalign}
To be clear the notation $Q_{x}^{\delta }\bullet Q_{y}^{\delta}$ is \textit{not} equal to the composition $Q^{\delta }\circ Q_{y}^{\delta}=Q^{\delta }(Q_{y}^{\delta})=Q^{\delta }(a),$ which is the first time that the GMC $\eta$ hits the level $a:=Q_{y}^{\delta}$.

\subsubsection*{Scaling laws}
Let $\omega^{\delta,\lambda}_{\e}(x)$ be the field with the following covariance \begin{eqalign}\label{eq:truncatedscaledomega}
\Expe{\omega^{\delta,\lambda}_{\e}(x_{1})\omega^{\delta,\lambda}_{\e}(x_{2})}=\left\{\begin{matrix}
\ln(\frac{\delta }{\varepsilon} )+1-\frac{1}{\e}\abs{x_{2}-x_{1}}&\tifc \abs{x_{2}-x_{1}}\leq \varepsilon\\ ~\\
 \ln(\frac{\delta}{\abs{x_{2}-x_{1}}}) &\tifc \e\leq \abs{x_{2}-x_{1}}\leq \frac{\delta}{\lambda}\\~\\
  0&\tifc \frac{\delta}{\lambda}\leq \abs{x_{2}-x_{1}}
\end{matrix}\right.    .
\end{eqalign}
In our study of the scaling laws we will use the following statement (\cite[theorem 4]{bacry2003log}, \cite[proposition 3.3]{robert2010gaussian}.)
\begin{proposition}\label{exactscaling}\label{prop:GMClogscalinglaw}
For $\lambda\in (0,1)$ and fixed $x_{0}$ and all Borel sets $A\subset B_{\delta/2}(x_0)$ we have
\begin{eqalign}
 \set{\eta^{\delta}_{\omega}(\lambda A)}_{A\subset B_{\delta/2}(x_0)}\eqdis \set{\lambda e^{\overline{\Omega_{\lambda}}}\eta^{\delta,\lambda}_{\omega}(A)}_{A\subset B_{\delta/2}(x_0) },
\end{eqalign}
where  $\overline{\Omega_{\lambda}}:=\gamma\sqrt{ \ln(\frac{1}{\lambda})}N(0,1)-\frac{\gamma^{2}}{2}\ln(\frac{1}{\lambda})$ and the measure $\eta_{\omega_{\e}}^{\delta,\lambda}$ has the underlying field $\omega^{\delta,\lambda}_{\e}(x)$
\end{proposition}
 In the rest of the article, we let
\begin{eqalign}\label{logfield}
&G_{\lambda}:=\frac{1}{\lambda}\expo{-\overline{\Omega_{\lambda}}}.
\end{eqalign}
We will also use the field $U_{ \varepsilon}^{\delta, \lambda}$ with covariance $R_{\e}^{\delta,\lambda}(\abs{x_{1}-x_{2}}):=\Expe{U_{ \varepsilon}^{  \delta,\lambda }(x_{1} )U_{ \varepsilon}^{  \delta,\lambda }(x_{2} )  }$
\begin{eqalign}\label{eq:truncatedscaled}
R_{\e}^{\delta,\lambda}(\abs{x_{1}-x_{2}})=\left\{\begin{matrix}
\ln(\frac{\delta }{\varepsilon} )-\para{\frac{1}{\e}-\frac{1}{\delta}}\abs{x_{2}-x_{1}}+(1-\lambda)(1-\frac{\abs{x_{2}-x_{1}}}{\delta})&\tifc \abs{x_{2}-x_{1}}\leq \varepsilon\\ ~\\
 \ln(\frac{\delta}{\abs{x_{2}-x_{1}}})-1+\frac{\abs{x_{2}-x_{1}}}{\delta}+(1-\lambda)(1-\frac{\abs{x_{2}-x_{1}}}{\delta}) &\tifc \e\leq \abs{x_{2}-x_{1}}\leq \frac{\delta}{\lambda}\\~\\
  0&\tifc \frac{\delta}{\lambda}\leq \abs{x_{2}-x_{1}}
\end{matrix}\right.    .
\end{eqalign}
\begin{remark}\label{rem:negativecov}
We note that after $\abs{x_{2}-x_{1}}\geq \delta$ this covariance is negative and thus discontinuous at $\abs{x_{2}-x_{1}}=\frac{\delta}{\lambda}$ and so the GMC $\eta^{\delta,\lambda}$ cannot be defined (so far as we know in the literature, GMC has been built for positive definite covariances \cite{allez2013lognormal}). So we are forced to evaluate those fields only over sets $A$ with length $\abs{A}\leq \delta$ (see \cite[lemma 2]{bacry2003log} where they also need this restriction to define the scaling law.).
\end{remark}
 The $U_{ \varepsilon}^{\delta, \lambda}$ is related to $U^{\delta}_{\e}$ in the following way
\begin{eqalign}\label{eq:truncatedscalinglawGMC}
U^{\delta}_{\lambda\e}(\lambda x)\eqdis Z_{\lambda}+  U_{ \varepsilon}^{  \delta,\lambda }(x),
\end{eqalign}
where $Z_{\lambda}:=N(0,\ln\frac{1}{\lambda}-1+\lambda)$ is a Gaussian independent of $ U_{ \varepsilon}^{  \delta,\lambda }$. The corresponding lognormal is denoted by
\begin{eqalign}\label{logfieldshifted}
c_{\lambda}:=&\frac{1}{\lambda}\expo{-\overline{Z_{\lambda}}}.
\end{eqalign}
We can now restate \eqref{eq:truncatedscalinglawGMC}:
\begin{lemma}\label{lem:scalinglawudelta}[Scaling transformation for $U^{\delta}_{\e}$]
For $\lambda\in (0,1)$, $x_{0}\in\mathbb{R}$ and all Borel sets $A\subset B_{\delta/2}(x_0)$ we have
\begin{eqalign}
 \set{\eta^{\delta}_{U}(\lambda A)}_{A\subset B_{\delta/2}(x_0)}\eqdis \set{\lambda e^{\overline{Z_{\lambda}}}\eta^{\delta,\lambda}_{U}(A)}_{A\subset B_{\delta/2}(x_0) },  \quad \eta_{U}^{\delta,\lambda}:=\eta_{U^{\delta,\lambda}_{\e}}.\end{eqalign}
\end{lemma}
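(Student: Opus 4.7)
The plan is to reduce the measure-level identity to the Gaussian covariance identity already displayed in \eqref{eq:truncatedscalinglawGMC} and let a deterministic change of variables do the rest. First I would write $\eta^{\delta}_{U}(\lambda A)$ as the $\e\to 0$ limit of the Wick-renormalized integral over $\lambda A$, perform the change of variables $x=\lambda y$ (which produces a Jacobian factor of $\lambda$), and simultaneously reparametrize the regularization scale as $\e=\lambda\e'$; the integrand then becomes $\exp\!\bigl(\gamma U^{\delta}_{\lambda\e'}(\lambda y)-\frac{\gamma^{2}}{2}\Expe{(U^{\delta}_{\lambda\e'}(\lambda y))^{2}}\bigr)$ integrated against $dy$ over $A$.

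Next I would invoke $U^{\delta}_{\lambda\e'}(\lambda y)\eqdis N_{\lambda}+U^{\delta,\lambda}_{\e'}(y)$, with $N_{\lambda}\sim N(0,\ln(1/\lambda)-1+\lambda)$ independent of $U^{\delta,\lambda}_{\e'}$. To justify this I would match the covariance $R^{\delta}_{\lambda\e'}(\lambda|y_{1}-y_{2}|)$ from \cref{linearU} against $R^{\delta,\lambda}_{\e'}(|y_{1}-y_{2}|)+(\ln(1/\lambda)-1+\lambda)$ from \eqref{eq:truncatedscaled}, piecewise in the two regimes $|y_{1}-y_{2}|\leq\e'$ and $\e'\leq|y_{1}-y_{2}|\leq\delta$. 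The restriction $A\subset B_{\delta/2}(x_{0})$ keeps us safely below $|y_{1}-y_{2}|=\delta/\lambda$, where the scaled covariance goes negative and the GMC is not defined (see \cref{rem:negativecov}).

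Once the field identity is in hand, the exponent splits additively. Since $N_{\lambda}$ is independent of $U^{\delta,\lambda}_{\e'}$, the variance decomposes as $\Expe{(U^{\delta}_{\lambda\e'}(\lambda y))^{2}}=(\ln(1/\lambda)-1+\lambda)+\Expe{(U^{\delta,\lambda}_{\e'}(y))^{2}}$, so the integrand factors as
\begin{equation*}
e^{\gamma N_{\lambda}-\frac{\gamma^{2}}{2}(\ln(1/\lambda)-1+\lambda)}\cdot e^{\gamma U^{\delta,\lambda}_{\e'}(y)-\frac{\gamma^{2}}{2}\Expe{(U^{\delta,\lambda}_{\e'}(y))^{2}}}.
\end{equation*}
The first factor is, by the very definition in \eqref{logfieldshifted}, equal to $e^{\overline{Z_{\lambda}}}$ and is constant in $y$, so it pulls out of the integral; the integrated second factor converges as $\e'\to 0$ to $\eta^{\delta,\lambda}_{U}(A)$ by the standard Kahane construction of the GMC associated to $U^{\delta,\lambda}$. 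Collecting the Jacobian $\lambda$, the independent constant $e^{\overline{Z_{\lambda}}}$, and the limit yields $\lambda e^{\overline{Z_{\lambda}}}\eta^{\delta,\lambda}_{U}(A)$, which is the asserted identity.

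The main obstacle is the covariance bookkeeping behind \eqref{eq:truncatedscalinglawGMC}: one must verify piecewise that the specific variance $\ln(1/\lambda)-1+\lambda$ of $N_{\lambda}$ exactly absorbs the discrepancy between $R^{\delta}_{\lambda\e'}(\lambda|y|)$ and $R^{\delta,\lambda}_{\e'}(|y|)$ in each of the two regimes, and nothing more is needed. The upgrade from a pointwise field equality in law to the joint equality in law of the set-indexed random measures $\{\eta^{\delta}_{U}(\lambda A)\}_{A\subset B_{\delta/2}(x_{0})}$ and $\{\lambda e^{\overline{Z_{\lambda}}}\eta^{\delta,\lambda}_{U}(A)\}_{A\subset B_{\delta/2}(x_{0})}$ is routine from the $L^{2}$ convergence of the approximations, but is worth a sentence for completeness.
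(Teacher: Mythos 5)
Your proposal is correct and follows essentially the same route as the paper: the paper establishes the field identity $U^{\delta}_{\lambda\e}(\lambda x)\eqdis N(0,\ln\tfrac{1}{\lambda}-1+\lambda)+U^{\delta,\lambda}_{\e}(x)$ by exactly the piecewise covariance comparison you describe (and your bookkeeping checks out, with the gap between $R^{\delta}_{\lambda\e}(\lambda r)$ and $R^{\delta,\lambda}_{\e}(r)$ equal to $\ln\tfrac{1}{\lambda}-1+\lambda$ in both regimes, the restriction $A\subset B_{\delta/2}(x_0)$ keeping you below $\delta/\lambda$), and then deduces the measure identity by the same change of variables and factoring out of the independent lognormal, as in the proof of Proposition \ref{logscalinginver}.
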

 For any $p\in \R$ we have the moment formula
\begin{eqalign}
\Expe{\expo{p\overline{Z_{\lambda}}}  }=\expo{p\beta r_{\lambda} (p-1)}.
\end{eqalign}
More generally, for positive continuous bounded function $g_{\delta}:\Rplus\to \Rplus$ with $g_{\delta}(x)=0$ for all $x\geq \delta$, we let
\begin{eqalign}
\Expe{U_{ \varepsilon}^{\delta,g }(x_{1} )U_{ \varepsilon}^{  \delta,g }(x_{2} )  }=\left\{\begin{matrix}
\ln(\frac{\delta }{\varepsilon} )-\para{\frac{1 }{\e}-\frac{1}{\delta}}\abs{x_{2}-x_{1}}+g_{\delta}(\abs{x_{2}-x_{1}})&\tifc \abs{x_{2}-x_{1}}\leq \varepsilon\\
 \ln(\frac{\delta}{\abs{x_{2}-x_{1}}})-1+\frac{\abs{x_{2}-x_{1}}}{\delta}+g_{\delta}(\abs{x_{2}-x_{1}}) &\tifc \e\leq \abs{x_{2}-x_{1}}\leq \delta\\
  0&\tifc \delta\leq \abs{x_{2}-x_{1}}
\end{matrix}\right.  .
\end{eqalign}

\subsubsection*{Singular integral}
Let the singular/fusion integrals for GMC measure be defined by (\cite[lemma A.1]{david2016liouville})
\begin{eqalign}\label{eq:singularintegralfusion}
\eta_{R(a)}(A):=\int_{A}e^{\gamma^{2}\Expe{U(s)U(a)}}e^{\gamma U(s)}\ds,\hfil A\subset \Rplus,\hfil a\in\Rplus.\end{eqalign}
As before, let $Q_{R}(x)$ denote its inverse.

Using the Girsanov/tilting-lemma (\cite[Lemma 2.5]{berestycki2021gaussian}) we have
\begin{eqalign}
\Expe{\ind{\eta(x)\geq t}e^{\thickbar{U}(a)}}=\Proba{\eta_{R(a)}(x)\geq t}.
\end{eqalign}

\subsubsection*{Filtration notations}
Following \cite[section 4.2]{AJKS}, for a Borel set $S\subset \mathbb{H}$ we define $\mathcal{U}_{S}=\mathcal{U}(S)$ to be the $\sigma$-algebra generated by the randoms variables $U(A)$, where $A$ runs over Borel subsets $A\subseteq S$. For short if $X\in \CU_{S}$ we will call such a variable \textit{measurable}. Moreover, since we will be using the triangular region $\mathcal{S}$ over intervals $[a,b]$, we define
\begin{eqalign}
\mathcal{U}_{\e}^{\delta}([a,b]):=\mathcal{U}(\bigcup_{x_{0}\in [a,b]}\mathcal{S}_{\e}^{\delta}(x_{0})).
\end{eqalign}
We fix decreasing sequence $(\delta_{n}=\rho_{*}^{n})_{n\geq 1}$ for some $\rho_{*}<1$. We denote the $\sigma-$algebra of upper truncated fields
\begin{eqalign}
\CU^{n}=\CU^{n}_{\infty}:=\CU_{\mathcal{S}^{\delta_{n}}_{0}}.
\end{eqalign}
All the lower truncations are measurable with respect to the filtration of the larger scales
\begin{eqalign}
U^{n}(s)=U^{\delta_{n}}_{0}(s)\in \CU^{k}([a,b]) \tfor n\geq k,s\in [a,b],
\end{eqalign}
and the same is true for the measure and its inverse
\begin{eqalign}
\set{\eta^{n}(a,b)\geq t}\in \CU^{k}([a,b])\tand \set{Q^{n}(0,x)\geq t}\in \CU^{k}([0,t]).  \end{eqalign}
We define measurability with respect to random times in a standard fashion. Namely, for a single time $Q^{k}(b)$ we have the usual notion of the stopping time filtration
\begin{eqalign}\label{eq:filtrationnotation}
\mathcal{F}\para{[0,Q^{k}(b)]}:=\left\{A\in \bigcup_{t\geq 0}  \CU^{k}([0,t]): A\cap\{Q^{k}(b) \leq t\}\in  \CU^{k}([0,t]), \forall t\geq 0\right\}.
\end{eqalign}
For example, $\CU^{n}\para{[0,Q^{k}(b)-s]}\subset \mathcal{F}\para{[0,Q^{k}(b)]}$ for all $n\geq k$ and $s\in [0,Q^{k}(b)]$. We also have, for $n\geq k$,
\begin{eqalign}
\set{Q^{k}(b)\geq Q^{n}(a)+s} \in  \mathcal{F}\para{[0,Q^{k}(b)]}.
\end{eqalign}

This perspective helps us achieve a decoupling. For two interval increments $Q^{k}(a_{k},b_{k}),Q^{n}(a_{n},b_{n})$ from possibly different scales $n\geq k$ , we we will need to consider the \textit{gap event}
\begin{eqalign}
G_{k,n}:=\set{Q^{k}(a_{k})-Q^{n}(b_{n})\geq \delta_{n}} \in  \mathcal{F}\para{[0,Q^{k}(a_{k})]}.
\end{eqalign}

 \part{General properties of the inverse}\label{part:generalpropertiesinverse}
 In this section we prove some basic properties for the inverse $Q:=Q_{\eta}$.
\section{Existence of the inverse}
Let us first note that the random map $Q: [0,\infty)\to [0,\infty)$ exists and is continuous. It exists because $\eta:[0,\infty)\to [0,\infty)$ is non-atomic and has full support (\cite[theorem 1]{bacry2003log}) and so it is strictly increasing. It is continuous because as mentioned in \cite[Theorem 3.7]{AJKS}, the map $\eta$ is bi-\Holder.\\
We can also view the inverse $Q$ as the weak-limit of $Q_{\e}$, the inverses of lower truncated $\eta_{\e}$:
\begin{eqalign}
\Proba{Q_{\e}(x)\geq t }=\Proba{x\geq \eta_{\e}(0, t) } \to \Proba{x\geq \eta(0, t) }= \Proba{Q(x)\geq t}
\end{eqalign}
using that $\eta$ is the $L^{2}$-Cauchy limit of $\eta_{\e}$ (eg. \cite[Theorem 2.1] {berestycki2021gaussian}).\\
Finally, in \Cref{rateconvCauchy}, we also give a more direct proof that the inverse $(Q_{\e})_{\e\geq 0}$ converges in  $L^{\ell}$ as $\e\to 0$ for $\ell\in (1,2)$ and $\gamma<\frac{1}{12\sqrt{2}}$.
\section{The \sectm{$\delta$}-Strong Markov property }\label{inversemeasureproperties}\label{INVARIANCE_PROPERTIES}
GMC does not satisfy Strong Markov Property (SMP) (see \Cref{notMarkov}). One of our main technical tools is the following replacement, a version of the strong Markov property with a time delay.
\begin{proposition} \label{it:SMP} (\textit{$\delta$-Strong Markov property})
For parameters $\delta_{1},\delta_{2},a,t,\e\geq 0$ and $r\geq \minp{\delta_{1},\delta_{2}}$, we have the independence:
\begin{equation}\label{deltaSMP}
\etamul{Q_{\e}^{\delta_{2}}(a)+r,Q_{\e}^{\delta_{2}}(a)+r+t}{\delta_{1}}{\e}\indp Q_{\e}^{\delta_{2}}(a).\tag{$\delta$-SMP}
\end{equation}
We will call it the \textit{$\delta$-Strong Markov property} ($\delta$-SMP). In other words, $Q_{s}^{\delta_{1}}\bullet \para{Q_{a}^{\delta_{2}}+r}$ is independent of $Q_{a}^{\delta_{2}}$.
\end{proposition}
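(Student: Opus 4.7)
The plan is to exploit the finite horizontal-range structure of the field $U^\delta$ inherited from its white-noise construction. By definition $U^\delta(s)=W(U^\delta_0+s)$, where the base region $U^\delta_0$ has horizontal width exactly $\delta$. Consequently, for any interval $[c,d]$ the measure $\eta^\delta(c,d)$ is a functional of the white noise $W$ restricted to
\begin{equation*}
V^\delta_{[c,d]}:=\bigcup_{s\in[c,d]}\bigl(U^\delta_0+s\bigr),
\end{equation*}
whose horizontal extent equals $[c-\delta/2,\,d+\delta/2]$. This finite-range property is the structural ingredient that makes the $\delta$-buffer natural.

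First I would verify that $Q^\delta(a)$ is a stopping time with respect to the filtration $\mathcal{F}^\delta_t:=\sigma(W(A):A\subset V^\delta_{[0,t]})$, since $\{Q^\delta(a)\le t\}=\{\eta^\delta(0,t)\ge a\}\in\mathcal{F}^\delta_t$. The stopped $\sigma$-algebra $\mathcal{F}^\delta_{Q^\delta(a)}$ then contains $Q^\delta(a)$ and is generated by the white noise on $V^\delta_{[0,Q^\delta(a)]}$. The crucial geometric observation is that, for $r\ge\delta$, the region $V^\delta_{[Q^\delta(a)+r,\,Q^\delta(a)+r+t]}$ has horizontal extent starting at $Q^\delta(a)+r-\delta/2\ge Q^\delta(a)+\delta/2$, while $V^\delta_{[0,Q^\delta(a)]}$ ends at $Q^\delta(a)+\delta/2$; these two regions overlap at most on a measure-zero vertical line. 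Since white noise on disjoint Borel sets is independent, the post-shift noise, and hence the random variable $\eta^\delta\bigl(Q^\delta(a)+r,\,Q^\delta(a)+r+t\bigr)$, should be independent of $\mathcal{F}^\delta_{Q^\delta(a)}\ni Q^\delta(a)$.

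To make this rigorous with the random shift, I would discretize using the dyadic approximation $T_n(a)\downarrow Q^\delta(a)$ from the notations subsection. Each event $\{T_n(a)=m/2^n\}$ is $\mathcal{F}^\delta_{m/2^n}$-measurable, and on this event the deterministic-time version of the argument applies to the slightly enlarged shifted region $V^\delta_{[m/2^n+r-2^{-n},\,m/2^n+r+t]}$, which is disjoint from $V^\delta_{[0,m/2^n]}$ whenever $r\ge\delta+2^{-n}$. This yields independence on each atom; summing over $m$ and letting $n\to\infty$ gives the result for $r>\delta$, and the boundary case $r=\delta$ follows by the almost-sure continuity of $r\mapsto\eta^\delta(Q^\delta(a)+r,Q^\delta(a)+r+t)$ guaranteed by the non-atomicity of the GMC. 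Translation invariance of the white noise additionally identifies the law of the post-shift increment with that of $\eta^\delta(0,t)$, although the proposition only asserts independence.

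The main obstacle is the careful handling of the random endpoint $Q^\delta(a)$, since there is no off-the-shelf strong Markov theorem for GMC one can cite; the stopping-time decomposition and the disjointness of the base triangles must be combined explicitly. The hypothesis $r\ge\delta$ is exactly what guarantees that the underlying white-noise regions do not overlap, so the buffer is unavoidable without a finer conditional analysis on overlapping hyperbolic regions.
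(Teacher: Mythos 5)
Your proposal is correct and follows essentially the same route as the paper's proof: dyadic approximation of the hitting time, decomposition over its atoms, and independence of the white noise over the disjoint regions that the $\delta$-buffer creates, followed by a limiting/continuity step. The only differences are bookkeeping — the paper freezes the shift at the dyadic time $T_n$ and passes to the limit via continuity of $\eta$ and Portmanteau, while you keep the true shift $Q^{\delta}(a)$, pay the $2^{-n}$ enlargement (so $r\geq \delta+2^{-n}$) and recover $r=\delta$ by continuity; note that in your version translation invariance is not merely an "additional" remark but the ingredient that makes the conditional law of the shifted increment constant on each atom, which is what turns the disjointness of the noise regions into the claimed independence.
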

\begin{remark}
We prove this for general $\delta_{1},\delta_{2}\geq 0$ because we will need this in the decoupling work.    
\end{remark}
\begin{remark}
We have to take $r\geq \delta$ since the independence $\etam{\ell+r,\ell+r+t}\indp\etam{0,\ell}$ for deterministic $\ell$ only holds in this range of $r$.
\end{remark}
\begin{remark}\label{notMarkov}
The usual SMP with $r=0$
\begin{eqalign}
\eta_{\e}^{\delta}\para{Q_{\e}^{\delta}(a),Q_{\e}^{\delta}(a)+t}\indp Q_{\e}^{\delta}(a), a,t\geq 0,
\end{eqalign}
does not hold because even the field $U_{\e}^{\delta}$ does not satisfy Markov property. Indeed, a centered Gaussian process is Markov if and only if its covariance  function $\Gamma: \mathbb{R}\times\mathbb{R} \to \mathbb{R}$ satisfies the equality \cite[theorem 8.4]{ basu2003introduction}:
\begin{eqalign}
\Gamma(s,u)\Gamma(t,t)=\Gamma(s,t)\Gamma(t,u)
\end{eqalign}
 for all $s<t<u$. The covariance for $U_{\e}^{\delta}(x)$ is
\begin{eqalign}
\Gamma(x_{1},x_{2}):=\Exp[U_{ \varepsilon}^{  \delta }(x_{1} )U_{ \varepsilon}^{  \delta }(x_{2} )  ]=\left\{\begin{matrix}
\ln\para{\frac{\delta}{\varepsilon} }+\frac{x_{2}-x_{1}}{\delta}-\frac{x_{2}-x_{1}}{\varepsilon} &, \abs{x_{2}-x_{1}}\leq \varepsilon\\
\ln(\frac{\delta}{x_{2}-x_{1}}) +\frac{x_{2}-x_{1}}{\delta}-1&, \delta>\abs{x_{2}-x_{1}}\geq \e
\end{matrix}\right.
\end{eqalign}
so for $s<t<u$ with $\abs{u-s}<\e$ we get the strict inequality:
\begin{eqalign}
\Gamma(s,t)\Gamma(t,u)=\Gamma(s,u)\Gamma(t,t)+\para{\frac{1}{\delta}-\frac{1}{\e}}^{2}\para{t-s}\para{u-t}>\Gamma(s,u)\Gamma(t,t).
\end{eqalign}
So its integrated process $\eta(L)=\int_{0}^{L}e^{U_{\e}^{\delta}(x)}\dx$ will likely not be Markov either since $\eta$ is a strictly increasing injective functional of the process and injectivity preserves Markov property. This is only a heuristic. 
\end{remark}
\begin{proof}[Proof of \ref{it:SMP}: the $\delta$-Strong Markov property for $\sectm{\eta}$]
For ease of notation we write $\eta^{1}=\eta_{\e}^{\delta_{1}},Q^{2}=Q_{\e}^{\delta_{2}}$. This proof is in the spirit of the usual proof of strong Markov property for Brownian motion via dyadic decomposition of the stopping time. Namely, we use a discrete canonical approximation $T_{n}\downarrow Q(a)$:
\begin{eqalign}\label{dyadicapproximationTn}
T_{n}:=\frac{m+1}{2^{n}}\twhen \frac{m}{2^{n}}\leq Q^{2}(a) < \frac{m+1}{2^{n}}.
\end{eqalign}
For fixed $n$ the range of $T_{n}$ is $D_{n}(0,\infty)$, all the dyadics of nth-scale in the open ray $(0,\infty)$.

We fix $u_{1},u_{2}\geq 0$ and decompose:
\begin{multline}
\Proba{\eta^{1}([T_{ n}+r,T_{ n}+r+t])\geq u_{1}, T_{n}\geq u_{2}}=\\\sum_{\ell\in D_{n}(u_{2},\infty)}\Proba{\eta^{1}\para{\ell+r,\ell+r+t}\geq u_{1}, \eta^{2}\para{0,\ell-\frac{1}{2^{n}}}\leq a<\eta^{2}\para{0,\ell} }.
\end{multline}
Now here we use the independence $\eta^{1}\para{\ell+r,\ell+r+t}\indp\eta^{2}\para{0,\ell}$ for $r\geq \minp{\delta_{1},\delta_{2}}$ to get the decoupling
\begin{multline}
\sum_{\ell\in D_{n}(u_{2},\infty)}\Proba{\eta^{1}\para{\ell+r,\ell+r+t}\geq u_{1}}\Proba{ \eta^{2}\para{0,\ell-\frac{1}{2^{n}}}\leq a<\eta^{2}\para{0,\ell} }=\\\Proba{\eta^{1}([T_{ n}+r,T_{ n}+r+t])\geq u_{1}}\Proba{T_{n}\geq u_{2}}.
\end{multline}
Finally, in terms of the limits, we use that $\eta(x,y)=\eta(y)-\eta(x)$ is continuous in $(x,y)$, and so we can take the limit $T_{n}\downarrow Q(a)$ and apply Portmanteau theorem.
\end{proof}
\subsubsection{Decoupling}
Next we go over the decoupling that will be used in the proof of moments for the inverse. We use the filtration notation $\mathcal{F}\para{[0,Q(a)]}$ from \cref{eq:filtrationnotation}. For fixed $\ell\geq 0$, we split
\begin{eqalign}
 \eta\para{\ell,\ell+t}=&\int_{0}^{t}e^{\overline{U(\ell+s)\cap U(\ell)}}e^{\overline{U(\ell+s)\setminus U(\ell)}}\ds\\ =&\int_{0}^{t}e^{\overline{U(\ell+s)\cap U(\ell)}}\deta_{+}(s),   
\end{eqalign}
where we let $\deta_{+}(s):=e^{\overline{U(\ell+s)\setminus U(\ell)}}\ds$. This measure $\eta_{+}$ is a GMC measure for the field $X_{s}:=U(\ell+s)\setminus U(\ell)$ that has covariance
\begin{eqalign}
\Expe{X_{s}X_t}=R(\abs{t-s})-R(\maxp{t,s})\geq 0.    
\end{eqalign}
For the next proposition, for fixed Gaussian field $X$ we use the notation
\begin{eqalign}
\Exp_{X}\tand \Prob_{X},    
\end{eqalign}
to denote that we integrate with respect to the law of $X$.
\begin{proposition}\label{prop:decouplingsplit}
We fix an event $A(Q(a))\in \mathcal{F}\para{[0,Q(a)]}$. We have   
\begin{eqalign}
&\Proba{\eta([Q(a),Q(a)+t])\geq u, A(Q(a))}    \\
=&\Exp_{\tilde{U}}\Prob_{U}\spara{\int_{0}^{t}e^{U(Q(a)+s)\cap U(Q(a))}\deta_{\tilde{U}}(s)\geq u, A(Q(a)) },
\end{eqalign}
where the outer expectation $\Exp_{\tilde{U}}$ is with respect to an independent realization $\tilde{U}$ of U field i.e. $\set{\tilde{U}(s)\setminus \tilde{U}(0) }_{s\geq 0}\eqdis\set{U(s)\setminus U(0) }_{s\geq 0}$, the inner probability is wrt to the field $U$ and 
\begin{eqalign}
\deta_{\tilde{U}}(s):=e^{\overline{\tilde{U}(s)\setminus \tilde{U}(0)}}\ds.  \end{eqalign}
\end{proposition}
\begin{proof}
We again start by approximating and decomposing over the dyadics
\begin{eqalign}\label{eq:decspli1}
&\Proba{\eta([T_{ n},T_{ n}+t])\geq u, A(T_{n})}\\
=&\sum_{\ell\in D_{n}(0,\infty)}\Proba{\eta\para{\ell,\ell+t}\geq u, A(\ell),\eta\para{0,\ell-\frac{1}{2^{n}}}\leq a<\eta\para{0,\ell} }\\
=&\sum_{\ell\in D_{n}(0,\infty)}\Proba{\int_{0}^{t}e^{U(\ell+s)\cap U(\ell)}\deta_{+}(s)\geq u, A(\ell),\eta\para{0,\ell-\frac{1}{2^{n}}}\leq a<\eta\para{0,\ell} }.
\end{eqalign}
Since $X=\set{U(\ell+s)\setminus U(\ell) }_{s\geq 0}$ is independent of $\mathcal{U}([0,\ell])$, and so we have 
\begin{eqalign}
X\eqdis\set{\tilde{U}(s)\setminus \tilde{U}(0) }_{s\geq 0}=:\tilde{X}    
\end{eqalign}
for an independent realization $\tilde{U}$ of field $U$. We can use the tower property to split
\begin{eqalign}\label{eq:decspli2}
\eqref{eq:decspli1}=&\sum_{\ell\in D_{n}(0,\infty)}\Exp_{\tilde{X}}\Prob_{U}\spara{\int_{0}^{t}e^{U(\ell+s)\cap U(\ell)}\deta_{+}(s)\geq u, A(\ell),\eta\para{0,\ell-\frac{1}{2^{n}}}\leq a<\eta\para{0,\ell} }\\
=&\Exp_{\tilde{X}}\Prob_{U}\spara{\int_{0}^{t}e^{U(T_{n}+s)\cap U(T_{n})}\deta_{+}(s)\geq u, A(T_{n}) },
\end{eqalign}
where $\deta_{+}$ is independent with respect to $\mathcal{U}([0,T_{n}])$.
    
\end{proof}

\subsection{Nonlinear expected value}
 For the usual GMC we know that its expected value is linear $\Expe{\eta(a,b)}=b-a$ \cite[Theorem 2]{bacry2003log}.  Using the $\delta$-(SMP) property, we obtain a nonlinear relation for the expected value of the inverse.
\begin{proposition}\label{differencetermunshifted} For $a>0$ and $r\geq \delta $
\begin{eqalign}\label{eq:nonlinearexpect}
\Expe{\eta^{\delta}(Q^{\delta}(a),Q^{\delta}(a)+r)}-r=\Expe{Q^{\delta}(a)}-a&=\int_{0}^{\infty}\Proba{ Q_{R(t)}^{\delta}(a)\leq t \leq  Q^{\delta}(a)}\dt\\
&=\int_{0}^{\infty}\Proba{ \eta^{\delta}(t)\leq a \leq  \eta_{R(t)}^{\delta}(t) }\dt>0,
\end{eqalign}
for $\eta_{R(t)}(t):=\int_{[0,t]}e^{\gamma^{2}\Expe{U_{\e}(s)U_{\e}(t)}}e^{\gamma U_{\e}(s)}\ds$. In particular, for any $a>0$ we have $\Expe{Q^{\delta}(a)}>a$. 
\end{proposition}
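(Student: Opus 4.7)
The plan is to handle the chain of four expressions in two pieces. The first equality is reduced to the base case $r=\delta$ by \ref{it:SMP} and then verified by a Girsanov computation; the second and third equalities are then extracted by applying the same Girsanov tilt to the trivial identity $a=\Expe{\eta^{\delta}(Q^{\delta}(a))}$, and the fourth equality is a definitional rewrite.

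For the first equality I would split
\[
\eta^{\delta}(Q^{\delta}(a),Q^{\delta}(a)+r)=\eta^{\delta}(Q^{\delta}(a),Q^{\delta}(a)+\delta)+\eta^{\delta}(Q^{\delta}(a)+\delta,Q^{\delta}(a)+r),
\]
and invoke \ref{it:SMP}: the second summand is independent of $Q^{\delta}(a)$ and, by translation invariance of $U^{\delta}$, equidistributed with $\eta^{\delta}(0,r-\delta)$, so its expectation is exactly $r-\delta$. Subtracting $r$ reduces the equality to the base case $\Expe{\eta^{\delta}(Q^{\delta}(a),Q^{\delta}(a)+\delta)}-\delta=\Expe{Q^{\delta}(a)}-a$, and using $\eta^{\delta}(Q^{\delta}(a))=a$ this in turn reduces to $\Expe{\eta^{\delta}(0,Q^{\delta}(a)+\delta)}=\Expe{Q^{\delta}(a)}+\delta$. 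I would prove this by Fubini plus the Girsanov tilt by $e^{\overline{U}^{\delta}(s)}$, which sends $\eta^{\delta}\mapsto\eta^{\delta}_{R(s)}$ and $Q^{\delta}\mapsto Q^{\delta}_{R(s)}$, to obtain
\[
\Expe{\eta^{\delta}(0,Q^{\delta}(a)+\delta)}=\int_{0}^{\infty}\Proba{s\leq Q^{\delta}_{R(s)}(a)+\delta}\ds.
\]
The integrand is trivially $1$ for $s<\delta$ because $Q^{\delta}_{R(s)}(a)\geq 0$; for $s\geq\delta$ the key observation is that the covariance $\Expe{U^{\delta}(u)U^{\delta}(s)}$ vanishes for every $u\in[0,s-\delta]$ (since their separation is at least $\delta$), so the Girsanov shift is trivial on this interval, $\eta^{\delta}_{R(s)}(s-\delta)=\eta^{\delta}(s-\delta)$, and the probability equals $\Proba{s-\delta\leq Q^{\delta}(a)}$. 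A change of variable then evaluates the integral to $\delta+\Expe{Q^{\delta}(a)}$.

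For the second and third equalities I apply the same Fubini-plus-Girsanov manipulation to the tautology $a=\Expe{\eta^{\delta}(Q^{\delta}(a))}=\Expe{\int_{0}^{\infty}\ind{s\leq Q^{\delta}(a)}e^{\overline{U}^{\delta}(s)}\ds}$, producing $a=\int_{0}^{\infty}\Proba{s\leq Q^{\delta}_{R(s)}(a)}\ds$. Subtracting this from $\Expe{Q^{\delta}(a)}=\int_{0}^{\infty}\Proba{s\leq Q^{\delta}(a)}\ds$ and using that the Radon-Nikodym factor $e^{\gamma^{2}\Expe{U^{\delta}(\cdot)U^{\delta}(s)}}\geq 1$ implies $\eta^{\delta}_{R(s)}\geq\eta^{\delta}$ pointwise, and hence $Q^{\delta}_{R(s)}(a)\leq Q^{\delta}(a)$ almost surely, the difference of probabilities collapses to $\Proba{Q^{\delta}_{R(s)}(a)\leq s\leq Q^{\delta}(a)}$; this is the third equality. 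The fourth equality is then the definitional rewriting $\{Q^{\delta}_{R(s)}(a)\leq s\}=\{\eta^{\delta}_{R(s)}(s)\geq a\}$ together with $\{s\leq Q^{\delta}(a)\}=\{\eta^{\delta}(s)\leq a\}$. Strict positivity is immediate from the fact that the covariance $\Expe{U^{\delta}(u)U^{\delta}(t)}$ is strictly positive on $u\in(\max(0,t-\delta),t)$, which forces $\eta^{\delta}_{R(t)}(t)>\eta^{\delta}(t)$ strictly almost surely for every $t>0$, so the crossing event $\{\eta^{\delta}(t)<a<\eta^{\delta}_{R(t)}(t)\}$ has positive probability on an open range of $t$.

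The main obstacle I anticipate is the bookkeeping in the Fubini-plus-Girsanov identity when the integrand involves the random inverse $Q^{\delta}(a)$: one has to apply the tilting lemma at a ``generic'' point $s$ inside the $\ds$-integral, verify the integrability needed for the Fubini swap, and pass from the regularized fields $U^{\delta}_{\e}$ to the limit. Both follow routinely from the tilting lemma cited in the singular-integral section together with the convergence $Q^{\delta}_{\e}\to Q^{\delta}$ and the moment bounds already established in the paper, so the obstacle is essentially conceptual bookkeeping rather than a deep analytic difficulty.
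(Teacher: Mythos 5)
Your proposal is correct and takes essentially the same route as the paper: the first equality rests on Tonelli plus the fact that $U^{\delta}(t)$ decorrelates from the measure at distance $\geq\delta$ (you package this as a $\delta$-SMP split with a Girsanov tilt that is trivial beyond distance $\delta$, while the paper factors the indicator $\ind{\eta^{\delta}_{\e}(t-r)\leq a}$ directly inside the Tonelli integral and uses $\Expe{e^{\thickbar{U}^{\delta}_{\e}(t)}}=1$). The remaining identities and the strict positivity are obtained exactly as in the paper, by tilting $a=\Expe{\eta^{\delta}(Q^{\delta}(a))}$, using $\eta^{\delta}_{R(t)}\geq\eta^{\delta}$ and the layercake/definitional inversion.
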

\begin{remark}
This proposition shows that the GMC $\eta^{\delta}$ does \textit{not} satisfy a "strong" translation invariance i.e. $\Expe{\eta(Q^{\delta}(a),Q^{\delta}(a)+r)}\neq  r$. So the same is true for $Q^{\delta}(a,a+t)$ since
\begin{eqalign}
\Expe{Q^{\delta}(a,a+t)}=&\int_{0}^{\infty}\Proba{t>\eta^{\delta}(Q^{\delta}(a),Q^{\delta}(a)+r)}\dr\\
\neq& \int_{0}^{\infty}\Proba{t>\eta^{\delta}(0,r)}=\Expe{Q^{\delta}(t)}.
\end{eqalign}
It also shows that $\Expe{Q^{\delta}(a)}$ is a nonlinear function of $a$.
\end{remark}
\begin{proof}[Proof of \Cref{differencetermunshifted}]
This proof is in the spirit of the standard derivation of Wald's equation for Brownian motion \cite[theorem 2.44]{mortersperes2010brownian}. We first expand $\eta_{\e}$ and apply Tonelli
\begin{eqalign}
\Expe{\eta_{\e}(Q^{\delta}_{\e}(x)+r)}=&\int_{\delta}^{\infty}\Expe{ \ind{t\leq Q^{\delta}_{\e}(x)+r}e^{\thickbar{U}_{\e}^{\delta}(t)}  }\dt\\=&\int_{\delta}^{\infty}\Expe{ \ind{\eta_{\e}^{\delta}(t-r)\leq x}e^{\thickbar{U}_{\e}^{\delta}(t)}  }\dt.
\end{eqalign}
We note that the indicator $\ind{\eta_{\e}^{\delta}(t-r)\leq x}$ is independent of $\thickbar{U}_{\e}^{\delta}(t)$ and so we get the split into products
\begin{eqalign}
\Expe{\eta_{\e}^{\delta}(Q^{\delta}(x)+r)}=&\int_{0}^{\infty}\Proba{\eta_{\e}^{\delta}(t-r)\leq x}\Expe{e^{\thickbar{U}_{\e}^{\delta}(t)}  }\dt\\=&\int_{0}^{\infty}\Proba{\eta_{\e}^{\delta}(t-r)\leq x}\dt\\=&\Expe{Q^{\delta}_{\e}(x)}+r.
\end{eqalign}
So by taking limit in $\e\to 0$ we get the first relation. For the second equality we have
\begin{eqalign}
a&=\Expe{\eta^{\delta}_{\e}(Q^{\delta}_{\e}(a))}=\int_{0}^{\infty}\Expe{\ind{t\leq Q^{\delta}_{\e}(a)}e^{\thickbar{U}^{\delta}_{\e}(t)} }\dt\stackrel{}{=}\int_{0}^{\infty}\Proba{\eta_{\e,R(t)}^{\delta}(t)\leq a}\dt,
\end{eqalign}
where we used the tilting lemma  (\cite[Lemma 2.5]{berestycki2021gaussian}) in the last equality. So since
\begin{eqalign}
\eta_{\e,R(t)}(t)=\int_{[0,t]}e^{\gamma^{2}\Expe{U_{\e}(s)U_{\e}(t)}}e^{\gamma \bar{U}_{\e}(s)}\ds>  \int_{[0,t]}e^{\gamma \bar{U}_{\e}(s)}\ds=\eta_\e(t),  \forall \e\in[ 0 ,\delta),
\end{eqalign}
by the layercake representation and taking $\e\to 0$ we obtain strict positivity
\begin{eqalign}
0<\lim_{\e\to 0}\int_{0}^{\infty}\Proba{\eta_{\e}(t) \leq a\leq  \eta_{\e,R(t)}(t)}\dt=&\int_{0}^{\infty}\Proba{\eta(t) \leq a\leq  \eta_{R(t)}(t)}\dt\\
=&\int_{0}^{\infty}\Proba{ Q_{R(t)}(a)\leq t \leq  Q(a)}\dt\\
=&\Expe{Q(a)}-a.
\end{eqalign}
\end{proof}
\begin{remark}
Ideally, we would like to check whether the RHS of \Cref{eq:nonlinearexpect} is uniformly bounded in $a>0$
\begin{eqalign}
\supl{a>0}\int_{0}^{\infty}\Proba{\eta(t) \leq a\leq  \eta_{R(t)}(t)}\dt    <\infty\tor =\infty,
\end{eqalign}
but it is unclear of how the window $[\eta(t) ,\eta_{R(t)}(t)]$ grows as $t\to +\infty$.
\end{remark}
 \subsection{Large $a>0$}
Since it is unclear how the difference
\begin{eqalign}
\Expe{Q(a)}-a    
\end{eqalign}
behaves for fixed $a$, we can try to study it for large $a$. First, we will show that $\Expe{\frac{Q(a)}{a}}-1\to 0$. We apply the following limiting ergodic statements to GMC and its inverse \cite[lemma 1]{allez2013lognormal}.
\begin{lemma}\label{ergodiclemma}
Let $M$ be a stationary random measure on $\R$ admitting a moment of order $1+\delta$ for $\delta>0$. There is a nonnegative integrable random variable $Y\in L^{1+\delta}$ such that, for every bounded interval $I\subset \R$, $$\lim_{T \to \infty} \frac{1}{T} M\left(T I\right) = Y |I|\quad \text{almost surely and in }L^{1+\delta},$$
where  $|\cdot|$ stands for the Lebesgue measure on $\R$. As a consequence, almost surely the random measure $$A\in \mathcal{B}(\R)\mapsto \frac{1}{T}M(TA)$$ weakly converges towards $Y|\cdot|$ and $\E_Y[M(A)]=Y |A|$ ($\E_Y[\cdot]$ denotes the conditional expectation with respect to $Y$).
\end{lemma}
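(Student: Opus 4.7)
The plan is to reduce this continuous-time ergodic statement to the classical Birkhoff ergodic theorem applied to a suitable stationary sequence, and then to upgrade the convergence from integer scales and single intervals to continuous $T$, general intervals, and weak convergence of the rescaled measures.

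First I would set $X_n := M([n,n+1])$ for $n\in\Z$. Since $M$ is a stationary random measure on $\R$, the sequence $(X_n)_{n\in\Z}$ is strictly stationary, and by assumption $X_n\in L^{1+\delta}$ with $\Expe{X_0}=\Expe{M([0,1])}<\infty$. Applying the Birkhoff pointwise ergodic theorem gives
\begin{equation}
\frac{1}{N}\sum_{n=0}^{N-1}X_n\,\longrightarrow\, Y:=\Expe{X_0\,\big|\,\mathcal{I}}\quad\text{a.s.,}
\end{equation}
where $\mathcal{I}$ is the invariant $\sigma$-algebra of the shift, and $Y\geq 0$ with $\Expe{Y}=\Expe{M([0,1])}$. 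Convergence in $L^{1+\delta}$ is then obtained by the $L^p$ ergodic theorem: since the partial averages are uniformly bounded in $L^{1+\delta}$ (by Jensen or by domination via the maximal ergodic inequality) and converge a.s., Vitali's convergence theorem gives the $L^{1+\delta}$ statement.

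Next I would pass from integer $N$ to continuous $T$ and from $[0,1]$ to a general bounded interval $I=[a,b]$. For $T\geq 1$, sandwich $\lfloor T\rfloor\leq T<\lfloor T\rfloor+1$; monotonicity of $M$ gives $M([0,\lfloor T\rfloor])\leq M([0,T])\leq M([0,\lfloor T\rfloor+1])$, and dividing by $T$ together with the integer limit yields $T^{-1}M([0,T])\to Y$ a.s. and in $L^{1+\delta}$. For $I=[a,b]$, use stationarity to shift $TI=[Ta,Tb]$ to $[0,T(b-a)]$: by the previous step $T^{-1}M([0,T(b-a)])\to Y(b-a)=Y\abs{I}$ in the same senses, and stationarity shows the limit $Y$ is the same random variable (since both are identified with $\Expe{M([0,1])\,|\,\mathcal{I}}$ modulo the shift-invariance of $\mathcal{I}$). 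Finally $\Expe{M(A)\,|\,Y}=Y\abs{A}$ follows because, conditionally on $Y$, the expectation of $M([0,1])$ equals $Y$ by the tower property and the definition $Y=\Expe{M([0,1])\,|\,\mathcal{I}}\supseteq \sigma(Y)$, extended to arbitrary bounded Borel sets by monotone class.

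For the weak convergence of the random measures $\mu_T(A):=T^{-1}M(TA)$ towards $Y\abs{\cdot}$, I would use that intervals with rational endpoints form a countable determining class. On the single null set where the a.s.\ convergence fails for no rational interval, $\mu_T([a,b])\to Y(b-a)$ for every $a<b\in\Q$, and then a standard approximation (for any bounded continuous $f$ of compact support, approximate $\int f\,\dmu_T$ by Riemann sums over rational partitions) yields $\int f\,\dmu_T\to Y\int f\,\dx$. The main obstacle is not any single step individually but the bookkeeping around the $L^{1+\delta}$ upgrade and the passage from discrete-time Birkhoff to the continuous rescaling while keeping one common exceptional null set; the cleanest route is to run Birkhoff once on the skeleton $(X_n)$, identify $Y$, and then use monotonicity of $M$ to deduce everything else deterministically from the skeleton convergence.
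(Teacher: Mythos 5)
The paper itself gives no proof of this lemma: it is imported verbatim from \cite[lemma 1]{allez2013lognormal}, so there is no internal argument to compare against. Your skeleton --- Birkhoff's theorem applied to the stationary skeleton $X_n:=M([n,n+1])$, a sandwich in $\lfloor T\rfloor$ to pass to continuous $T$, and a countable class of rational intervals to upgrade to weak convergence of $\frac1T M(T\cdot)$ --- is the standard route for this statement and is essentially the right one.

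Three steps, however, do not hold up as written. (i) For a general interval you ``use stationarity to shift $TI=[Ta,Tb]$ to $[0,T(b-a)]$''. Stationarity gives equality in law under a \emph{fixed} translation, whereas here the translation $Ta$ grows with $T$; equality in law of the individual terms cannot transfer almost sure convergence, nor identify the limit with the same random variable $Y$. The conclusion is true, but the correct elementary argument is the difference/scaling identity $\frac1T M([Ta,Tb])=b\cdot\frac{1}{Tb}M([0,Tb])-a\cdot\frac{1}{Ta}M([0,Ta])\to Y(b-a)$ for $0\le a<b$, together with the backward ergodic averages (which converge to the same limit for the invertible shift) to cover intervals meeting the negative half-line --- a case your sketch never treats. (ii) Vitali's theorem requires uniform integrability of the $(1+\delta)$-th powers of the averages; mere boundedness in $L^{1+\delta}$ does not give this. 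What actually saves the step is the Jensen domination you mention only in passing: $\bigl(\frac1N\sum_{n<N}X_n\bigr)^{1+\delta}\le \frac1N\sum_{n<N}X_n^{1+\delta}$, and the right-hand side converges in $L^1$ by Birkhoff applied to $X_0^{1+\delta}\in L^1$, hence is uniformly integrable (or simply invoke the $L^p$ mean ergodic theorem). (iii) For $\mathbb{E}[M(A)\mid Y]=Y|A|$, knowing $\sigma(Y)\subset\mathcal{I}$ for the \emph{integer} shift only yields $\mathbb{E}[M([k,k+1])\mid Y]=Y$; to get $Y(b-a)$ for arbitrary real intervals you need $Y$ and the conditioning field to be invariant under every real translation (true, e.g.\ because a shift by fixed $s$ changes $M([0,T])$ by terms that are $o(T)$ a.s., using $X_n/n\to 0$), after which $A\mapsto\mathbb{E}[M(A)\mid\mathcal{I}_{\mathrm{flow}}]$ is an a.s.\ translation-invariant random measure, hence equals $Y|\cdot|$, and one towers down to $\sigma(Y)$. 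The ``monotone class'' remark alone does not supply this invariance, so as written the final identity is unproven.
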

\noindent For GMC  and its inverse the $Y$ variable is identically one.
 \begin{proposition}
 We have the following a.s./$L^{\frac{1}{\beta}}$-limits for $x\geq 0$ and $\delta\geq\e\geq 0$
\begin{eqalign}\label{eq:GMCergodiclimit}
\lii{T}\frac{\eta_{\e}^{\delta}(Tx)}{T}= x,
\end{eqalign}
and
\begin{eqalign}\label{eq:inverseergodiclimit}
\lii{T}\frac{Q_{\e}^{\delta}(Tx)}{T}= x.
\end{eqalign}
\end{proposition}
\begin{proof}
One way to prove \Cref{eq:GMCergodiclimit} is to use the independence of distant GMCs. For simplicity we take $\delta=1,\ \e=0$. Split $\frac{\eta^{1}(0,n)}{n}$ into alternating even and odd intervals $I_{k}=[k,k+1]$ to get two independent sequences of $(\eta(I_{k}))_{k~even}$ and $(\eta(I_{k}))_{k~odd}$. Then we apply strong law of large numbers separately to each squence to get convergence to
\begin{eqalign}
\frac{\eta^{1}(0,n)}{n}\stackrel{a.s.}{\to}\frac{1}{2}\Expe{\eta^{1}(0,1)}+\frac{1}{2}\Expe{\eta^{1}(1,2)}=1.  \end{eqalign}
To establish \eqref{eq:inverseergodiclimit}, we take any sequence $t_{n}\to +\infty$ and rearrange
\begin{eqalign}\label{eq:inverseasrate}
\Proba{  \set{\omega \in \Omega : \limsup_{n\to\infty}\abs{ \frac{Q(t_{n}x)}{t_{n}x}(\omega) - 1 } >r }}
\end{eqalign}
to write it in terms of $\eta$ and thus show it tends to zero for each $r>0$
\begin{eqalign}
\eqref{eq:inverseasrate}\leq\Prob\Big[&  \Big\{\omega \in \Omega : \limsup_{n\to\infty} \frac{\eta(t_{n}x(1+r))}{t_{n}x(1+r)}(\omega) - 1  <\frac{-r}{1+r}\\
&\quad\tor  \limsup_{n\to\infty} \frac{\eta(t_{n}x(1-r))}{t_{n}x(1-r)}(\omega) - 1  >\frac{r}{1-r}\Big\}\Big] .
\end{eqalign}
\end{proof}
\noindent So we obtain that 
\begin{eqalign}
\Expe{\frac{Q(a)}{a}}-1\to 0.    
\end{eqalign}
as desired.

\noindent But ideally we want to study the difference $\Expe{Q(a)}-a $. Here we at least get a bound on the possible growth for integer $a=n$.
\begin{theorem}
For $\frac{1}{2}>\beta$, we have the bound
\begin{eqalign}
\abs{\Expe{Q(n)}-n}\leq c_{\epsilon} n^{1+\epsilon},    
\end{eqalign}
for small $\epsilon>0$ (sufficiently small so that $\frac{1}{2(1+\epsilon)}>\beta$) and the constant $c_{\epsilon}$ diverges as $\epsilon\to 0$. 
\end{theorem}
\begin{proof}
We start with reverting to GMC
\begin{eqalign}\label{eq:deviationsumbound}
\Expe{\abs{Q(n)-n}}\leq& 1+\sum_{k=1}^{\infty}\Proba{\abs{Q(n)-n}\geq k}\\
\leq& 1+\sum_{k=1}^{\infty}\Proba{n+k-\eta(n+k)\geq k}+\sum_{k=1}^{n}\Proba{\eta(n-k)-(n-k)\geq k}
\end{eqalign}
\noindent We will use \cite[theorem 3]{rosenthal1970subspaces}.
\begin{theorem}
For $p > 2$ and independent, centered random variables $X_i \in L^p$, we have the bound
\begin{eqalign}
\Expe{\left|\sum \limits_{i = 1}^n X_i\right|^p} \le C(p) \max\left\{\sum \limits_{i = 1}^n \Expe{ \left|X_i\right|^p}, \left(\sum \limits_{i = 1}^n \Expe{X_i^2}\right)^{p/2}\right\}.    
\end{eqalign}
\end{theorem}
\noindent In the setting of GMC, repeating as in proof of \cref{eq:inverseergodiclimit} with alternating intervals we obtain the bound
\begin{eqalign}
\Expe{\abs{\eta(n)-n}^{p}}\leq c n^{p/2}.    
\end{eqalign}
\noindent Therefore, we bound
\begin{eqalign}
\eqref{eq:deviationsumbound}\leq   1+c\sum_{k=1}^{\infty}\frac{(n+k)^{p/2}}{k^p}+c\sum_{k=1}^{n}\frac{(n-k)^{p/2}}{k^p}\leq c n^{p/2}\sum_{k=1}^{\infty}\frac{1}{k^{p/2}}.
\end{eqalign}
By setting $p=2(1+\e)$ for small $\e>0$ we obtain the bound.
    
\end{proof}


\section{Scaling laws}
In this section we develop the scaling laws of the inverse analogously to that of GMC. Here we will need the scaling for the truncated $U^{\delta}$ field.
\begin{remark}\label{covariancescaling}
The field $U^{\delta}_{\e}(x)$ satisfies for $\lambda\in (0,\infty)$ the scaling
\begin{eqalign}
U^{\delta}_{\e}(x)\eqdis U^{\lambda\delta}_{\lambda\e}(\lambda x)
\end{eqalign}
because the covariance satisfies: $R_{ \varepsilon}^{  \delta }(\abs{ x_{1}- x_{2}})=R_{ \lambda\varepsilon}^{  \lambda\delta }(\abs{\lambda x_{1}-\lambda x_{2}})$. The same is true for the intersected wedges $U(\ell+x)\cap U(\ell)$
\begin{eqalign}
U^{\delta}_{\e}(\ell +x)\cap U^{\delta}_{\e}(\ell)\eqdis  U^{\lambda\delta}_{\lambda\e}(\ell +\lambda x)\cap U^{\lambda\delta}_{\lambda\e}(\ell), \hfill \ell\in \mathbb{R},
\end{eqalign}
because the covariances again satisfy the scaling relation
\begin{eqalign}
\Expe{U^{\delta}_{\e}(\ell+s)\cap U^{\delta}_{\e}(\ell)U^{\delta}_{\e}(\ell+t)\cap U^{\delta}_{\e}(\ell)}=R_{ \varepsilon}^{  \delta }(\min(t,s))=R_{\lambda \varepsilon}^{ \lambda \delta }(\lambda\min(t,s)).
\end{eqalign}
\end{remark}
\begin{proposition}\label{it:scalingincov}  (\textit{Scaling relation}) For the inverse measure $Q^{\delta}$ for the field $U^{\delta}$ and $Q$ for the field $U^{1}$ we have
\begin{eqalign}
Q^{\delta}[x]\eqdis \delta Q\spara{\frac{1}{\delta}x}.
\end{eqalign}
\end{proposition}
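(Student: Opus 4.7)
\medskip

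\noindent\textbf{Proof proposal.} The plan is to transfer the pointwise scaling of the field $U^{\delta}_{\e}$ recorded in the preceding remark up to the level of the measure $\eta^{\delta_n}$ and then, via the hitting-time definition of $Q$, to the inverse. I will work with the lower-truncated approximation $U^{\delta_n}_{\e}$ throughout and pass to the limit $\e\to 0$ only at the end.

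First, I would fix $\lambda = 1/\delta_n$ in the remark so that, as a joint process in $x$,
\begin{equation}
U^{\delta_n}_{\e}(\delta_n x) \stackrel{d}{=} U^{1}_{\e/\delta_n}(x).
\end{equation}
Next, I would push this through to the measures by a change of variables. Writing
\begin{equation}
\eta^{\delta_n}_{\e}[0,\delta_n t] = \int_{0}^{\delta_n t} e^{\gamma U^{\delta_n}_{\e}(s) - \frac{\gamma^{2}}{2}\mathbb{E}[(U^{\delta_n}_{\e}(s))^{2}]}\,\mathrm{d}s,
\end{equation}
substitute $s = \delta_n u$ and use the scaling of the field (noting that the Wick normalization depends only on the variance, which is preserved):
\begin{equation}
\eta^{\delta_n}_{\e}[0,\delta_n t] \stackrel{d}{=} \delta_n \int_{0}^{t} e^{\gamma U^{1}_{\e/\delta_n}(u) - \frac{\gamma^{2}}{2}\mathbb{E}[(U^{1}_{\e/\delta_n}(u))^{2}]}\,\mathrm{d}u = \delta_n\, \eta^{1}_{\e/\delta_n}[0,t].
\end{equation}
Since the distributional identity holds jointly in $t$ and the GMC is the $L^{2}$-limit as the cutoff is sent to zero, I would send $\e\to 0$ (equivalently $\e/\delta_n \to 0$) to obtain
\begin{equation}
\bigl\{\eta^{\delta_n}[0,\delta_n t]\bigr\}_{t\geq 0} \stackrel{d}{=} \bigl\{\delta_n\, \eta[0,t]\bigr\}_{t\geq 0}
\end{equation}
as processes, which is the key intermediate identity.

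Finally, using the hitting-time definition and the change of variables $t = \delta_n u$,
\begin{equation}
Q^{\delta_n}(x) = \inf\{t\geq 0: \eta^{\delta_n}[0,t]\geq x\} = \delta_n \inf\{u\geq 0: \eta^{\delta_n}[0,\delta_n u]\geq x\},
\end{equation}
and since the inf is a functional of the sample path of $t\mapsto \eta^{\delta_n}[0,\delta_n t]$, the process-level equality in distribution transfers to the inf, giving
\begin{equation}
Q^{\delta_n}(x) \stackrel{d}{=} \delta_n \inf\{u\geq 0: \delta_n\, \eta[0,u]\geq x\} = \delta_n\, Q\!\left[\tfrac{x}{\delta_n}\right].
\end{equation}
The only subtle point, which I would make explicit, is that the distributional identity must be taken jointly in the time parameter so that applying $\inf\{t:\,\cdot\,\geq x\}$ on both sides is legitimate; this is ensured by the strict monotonicity and continuity of $t\mapsto \eta[0,t]$ (from the bi-H\"older property cited in the introduction) and by the $L^{2}$ convergence being uniform enough after a standard modification argument. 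The only genuine obstacle is this continuity/joint-distribution bookkeeping; the algebraic content of the proof is just the change of variables combined with the covariance-level scaling already verified in the remark.
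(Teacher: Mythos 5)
Your proof is correct and follows essentially the same route as the paper: both rest on the covariance scaling $U^{\delta}_{\e}(x)\eqdis U^{\lambda\delta}_{\lambda\e}(\lambda x)$ of \cref{covariancescaling}, deduce from it that $\eta^{\delta_{n}}(\delta_{n}\,\cdot)\eqdis \delta_{n}\,\eta^{1}(\cdot)$, and then transfer this to the inverse. The paper performs the last transfer slightly more cheaply, using the pathwise identity $\set{Q^{\delta}(\lambda x)\geq t}=\set{\lambda x\geq \eta^{\delta}(t)}$ so that only the one-dimensional marginals of $\eta$ at each fixed $t$ are needed, which means the joint-in-$t$ (process-level) bookkeeping you are careful about is not actually required, though your stronger intermediate identity is also valid.
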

\begin{proof}
By Remark \ref{covariancescaling} $\eta^{\delta}[x]\eqdis \lambda\eta^{\delta/\lambda}[\frac{1}{\lambda}x]  $
and so:
\begin{eqalign}
\Proba{Q^{\delta}(0,\lambda x)\geq t}=\Proba{ \lambda x\geq \eta^{\delta}(t)}=\Proba{ \lambda x\geq \lambda\eta^{\frac{\delta}{\lambda}}\para{\frac{1}{\lambda}t}}=\Proba{\lambda Q^{\frac{\delta}{\lambda}}(x)\geq t}.
\end{eqalign}
\end{proof}
 For the inverse corresponding to the exactly-scaled field $\omega$ and the field $U$ we have the following scaling laws.
\begin{proposition}\label{logscalinginver}  (\textit{Lognormal scaling relation}) For all $x\geq 0$ and $t\in [0,\delta]$, we have the relation
\begin{eqalign}\label{eq:inversescalinglaw}
\Proba{\delta\geq \frac{1}{\lambda}Q^{\delta}( x)\geq  t  }=\Proba{\delta\geq Q^{\delta,\lambda}(F_{\lambda} x)\geq  t }
\end{eqalign}
where if the underlying field is $\omega^{\delta, \lambda}$, then we use $F_{\lambda}:=G_{\lambda}$ as defined in \Cref{logfield} and if it is $U^{\delta, \lambda}$, then we use $F_{\lambda}:=c_{\lambda}$ as defined in \Cref{logfieldshifted}. More generally, for any $k\geq 0$ and $t\geq k\delta$ we have
\begin{eqalign}\label{eq:inversescalinglawversion}
\Proba{(k+1)\delta\geq \frac{1}{\lambda}Q^{\delta}( x+\eta^{\delta}(k\lambda \delta))\geq  t  }=\Proba{\delta\geq Q^{\delta,\lambda}(F_{\lambda} x)\geq  t- k\delta }.
\end{eqalign}
\end{proposition}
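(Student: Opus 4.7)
The plan is to pass from the inverse to the forward GMC via the defining identity $\set{Q^\delta(x)\geq s}=\set{\eta^\delta(0,s)\leq x}$, apply the GMC scaling relation (\cref{prop:GMClogscalinglaw} in the $\omega$-case, \cref{lem:scalinglawudelta} in the $U$-case) as an equality of random measures on the ball $B_{\delta/2}(x_0)$, and then invert one more time to recover an inverse-of-GMC statement on the right-hand side.

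For the first identity, I would unpack $\delta\geq \lambda^{-1}Q^\delta(x)\geq t$ as $\lambda\delta\geq Q^\delta(x)\geq \lambda t$. By the inversion identity and almost-sure continuity of $s\mapsto \eta^\delta(0,s)$, this event coincides with
\begin{equation}
\set{\eta^\delta(0,\lambda t)\leq x\leq \eta^\delta(0,\lambda\delta)}.
\end{equation}
Since $0\leq \lambda t\leq \lambda\delta\leq \delta$, both intervals lie inside $B_{\delta/2}(\lambda\delta/2)$, so \cref{lem:scalinglawudelta} applies jointly and gives
\begin{equation}
\para{\eta^\delta(0,\lambda t),\ \eta^\delta(0,\lambda\delta)}\eqdis \lambda\expo{\overline{Z_{\lambda}}}\para{\eta^{\delta,\lambda}(0,t),\ \eta^{\delta,\lambda}(0,\delta)},
\end{equation}
with $\lambda\expo{\overline{Z_{\lambda}}}$ independent of $\eta^{\delta,\lambda}$. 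Dividing through and recognizing $F_\lambda=\lambda^{-1}\expo{-\overline{Z_{\lambda}}}=c_\lambda$, the event becomes $\set{\eta^{\delta,\lambda}(0,t)\leq F_\lambda x\leq \eta^{\delta,\lambda}(0,\delta)}$, and inverting once more produces $\set{\delta\geq Q^{\delta,\lambda}(F_\lambda x)\geq t}$. The $\omega$-case is identical with $F_\lambda=G_\lambda$.

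For the shifted version \eqref{eq:inversescalinglawversion}, I would use the semigroup formula from \cref{not:semigroupformula} to write $Q^\delta(x+\eta^\delta(k\lambda\delta))=k\lambda\delta+Q^\delta_x\bullet k\lambda\delta$, so the two-sided constraint reduces to $\lambda\delta\geq Q^\delta_x\bullet k\lambda\delta\geq \lambda(t-k\delta)$, i.e.
\begin{equation}
\set{\eta^\delta(k\lambda\delta,\lambda t)\leq x\leq \eta^\delta(k\lambda\delta,(k+1)\lambda\delta)}.
\end{equation}
Writing both intervals as $\lambda[k\delta,t]$ and $\lambda[k\delta,(k+1)\delta]$, the unscaled sets sit inside $B_{\delta/2}((k+\tfrac12)\delta)$, so the scaling lemma applies again with the same lognormal prefactor $\lambda\expo{\overline{Z_{\lambda}}}$. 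Translation invariance of $\eta^{\delta,\lambda}$, inherited from the covariance of $U^{\delta,\lambda}$ depending only on $\abs{x_1-x_2}$, then shifts the two intervals to $[0,t-k\delta]$ and $[0,\delta]$, and inverting yields the RHS.

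The main subtlety is that the event involves $\eta^\delta$ evaluated at two different levels, so I must invoke the scaling lemma as a joint equality of the whole family $\set{\eta^\delta(\lambda A)}_{A\subset B_{\delta/2}(x_0)}$ rather than just a one-marginal statement; fortunately this is precisely the form in which \cref{prop:GMClogscalinglaw} and \cref{lem:scalinglawudelta} are stated. The only other constraint to watch is \cref{rem:negativecov}: $\eta^{\delta,\lambda}$ is defined only on intervals of length at most $\delta$, but the bounds $\lambda t\leq \lambda\delta\leq \delta$ in the first case and $\lambda\delta\leq \delta$ in the shifted case keep us safely inside that regime.
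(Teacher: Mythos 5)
Your proposal is correct and follows essentially the same route as the paper: rewrite the two-sided event for the inverse as a two-sided event for $\eta^{\delta}$, apply the lognormal scaling law (\cref{prop:GMClogscalinglaw}, respectively \cref{lem:scalinglawudelta}) jointly to the pair of intervals with the independent prefactor absorbed into $F_{\lambda}$, and invert back, handling the shifted case via the semigroup identity and stationarity. The only cosmetic slip is that the hypothesis of the scaling lemma constrains the \emph{unscaled} sets $A=[0,t]\subset[0,\delta]\subset B_{\delta/2}(\delta/2)$ rather than the scaled intervals you mention, but since $\lambda<1$ both containments hold and nothing in the argument is affected.
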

\begin{proof}[proof of \ref{logscalinginver}]
To  prove \Cref{eq:inversescalinglaw} , we make use of  \propref{exactscaling} (it is applicable since $x\leq\eta_{\omega}^{\delta}(\lambda\delta))$:
\begin{eqalign}
\Proba{\delta\geq \frac{1}{\lambda}Q^{\delta}_{\omega}( x)\geq  t  }&=\Proba{ \eta_{\omega}^{\delta}(\lambda\delta)\geq x\geq \eta^{\delta}_{\omega}(\lambda t) }\\
&\stackrel{Prop.\ref{exactscaling}}{\longequal}\Proba{ \eta_{\omega}^{\delta,\lambda}(\delta)\geq \para{ \lambda e^{\overline{\Omega_{\lambda}}}}^{-1}x\geq \eta^{\delta,\lambda}_{\omega}t   }\\
&=\Proba{\delta\geq Q^{\delta,\lambda}_{\omega}(\para{ \lambda e^{\overline{\Omega_{\lambda}}}}^{-1} x)\geq  t }.
\end{eqalign}
For the truncated field $U^{\delta}$ and measure $\eta^{d}$,  we simply use the \Cref{eq:truncatedscalinglawGMC} to get
\begin{eqalign}
 \eta^{\delta}(\lambda x)\eqdis \lambda e^{\overline{\gamma N(0,\ln\frac{1}{\lambda}-1+\lambda)}}\eta^{\delta,\lambda}(x),
\end{eqalign}
and proceed as above. For the general  $t\geq k\delta $, we have
\begin{eqalign}
&\Proba{(k+1)\delta\geq \frac{1}{\lambda}Q^{\delta}( x+\eta^{\delta}(k\lambda \delta))\geq k\delta+  t  } \\
=&\Proba{\eta^{\delta}\spara{k\lambda\delta,(k+1)\lambda\delta}\geq x\geq  \eta^{\delta}\spara{k\lambda\delta ,k\lambda\delta+\lambda t  }  } \\
=&\Proba{\delta\geq Q^{\delta}(F_{\lambda} x)\geq  t  },
\end{eqalign}
where we also used translation invariance by $k\lambda$.

\end{proof}
\begin{remark}\label{rem:scalinglawerror}
One way to use this estimate for the law of the inverse is to split
\begin{eqalign}
\Proba{\frac{1}{\lambda}Q^{\delta}( x)\geq  t  }&\leq
\Proba{\delta\geq \frac{1}{\lambda}Q^{\delta}( x)\geq  t  }+\Proba{ \frac{1}{\lambda}Q^{\delta}( x)\geq \delta}\\
&=\Proba{\delta\geq Q^{\delta,\lambda}_{\omega}(F_{\lambda} x)\geq  t }+\Proba{ x\geq \eta^{\delta}\para{\lambda\delta}}
\end{eqalign}
and then we estimate the second error term as $x\to 0$. However, for $\lambda=x$ and using the log-normal scaling law we get
\begin{eqalign}
\Proba{ x\geq \eta^{\delta}\para{x\delta}}=\Proba{\sqrt{\beta \ln\frac{1}{x}}+\frac{\ln\frac{1}{\eta^{\delta}(\delta)}}{\sqrt{\beta \ln\frac{1}{x}}}\geq N(0,1)  },
\end{eqalign}
for independent Gaussian $N(0,1)$ and this actually goes to 1 as $x\to 0$. So one needs to use the shifted version in \Cref{eq:inversescalinglawversion}.
\end{remark}

 Using the scaling laws for $\eta$ we also obtain a formula for the density of the inverse in a restricted range.

\begin{proposition}\label{densitylocal} Fix $x>0$. The density $\rho_{Q_{\omega}(x)}(t)$ of the inverse $Q_{\omega}(x)$ has an exact representation in the interval $t\in (0,1)$:
\begin{eqalign}
&\rho_{Q_{\omega}(x)}(t)=\Expe{\int\limits_{\ln\para{ \dfrac{x}{\eta(1)}  }}^{\infty}g(t,y)  \dy},\\
\tfor g(t,y) :=&\frac{1}{\sqrt{2\pi }2\gamma^{3} \para{ \ln(\frac{1}{t})}^{5/2}t }\expo{-\frac{\para{y+\para{1+\frac{\gamma^{2}}{2}}\ln\frac{1}{t}  }^{2}}{2\gamma^{2}\ln(\frac{1}{t})}} \\
&\cdot\para{\para{\ln\frac{1}{t}}^{2}+\gamma^{2}\ln\frac{1}{t} -y^{2} } 
\end{eqalign}
where the integral is over the interval $[\ln\para{ \frac{x}{\eta(1)}  },\infty)$ and the expectation is for the law of $\eta(1)$.
\end{proposition}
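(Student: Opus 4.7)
\begin{proofs}[Proof proposal for \cref{densitylocal}]
The plan is to reduce the density of $Q_{\omega}(x)$ on $t\in[0,1]$ to the $t$-derivative of a conditional Gaussian CDF, by using the exact scaling law of \propref{exactscaling} to pull all of the $t$-dependence out of $\eta^{1}_{\omega}([0,t])$ into a log-normal factor, and differentiating under the expectation.

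First, I would apply \propref{exactscaling} with $\delta=1$, $x_{0}=1/2$, $A=[0,1]\subset B_{1/2}(1/2)$, and $\lambda=t\in(0,1)$ to obtain the in-distribution identity
\[
\eta^{1}_{\omega}([0,t])\;\eqdis\; t\,e^{\overline{\Omega_{t}}}\,\eta^{1,t}_{\omega}([0,1]),
\]
in which $\overline{\Omega_{t}}$ is independent of $\eta^{1,t}_{\omega}$. The next point is to identify $\eta^{1,t}_{\omega}([0,1])$ with $\eta^{1}_{\omega}([0,1])=:\eta(1)$ in law: on pairs of points in $[0,1]$ the covariance kernels of $\omega^{1,t}$ and of $\omega^{1}$ both reduce in the $\varepsilon\to 0$ limit to $\ln(1/|x_{1}-x_{2}|)$, and the GMC on a fixed bounded interval depends only on this limit kernel. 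This is the ingredient that makes the representation intrinsic in $\eta(1)$.

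Second, setting $M:=\eta(1)$ (independent of $\overline{\Omega_{t}}$) and $Y_{t}:=\ln t+\overline{\Omega_{t}}$, the variable $Y_{t}$ is Gaussian with mean $\mu(t):=-(1+\gamma^{2}/2)\ln(1/t)$ and variance $\sigma^{2}(t):=\gamma^{2}\ln(1/t)$, so the CDF of $Q_{\omega}(x)$ becomes
\[
\Proba{Q_{\omega}(x)\leq t}=\Proba{te^{\overline{\Omega_{t}}}M\geq x}=\Expe{\int_{\ln(x/M)}^{\infty} g(y;t)\,\dy},
\]
where $g(y;t)$ is precisely the Gaussian density appearing inside the integrand in the proposition. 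Since the lower limit $\ln(x/\eta(1))$ is $t$-free, differentiating in $t$ under the expectation gives
\[
\rho_{Q_{\omega}(x)}(t)=\Expe{\int_{\ln(x/\eta(1))}^{\infty}\partial_{t}g(y;t)\,\dy}.
\]
Logarithmic differentiation with $s:=\ln(1/t)$ produces $\partial_{t}g(y;t)=g(y;t)\cdot P(y,t)/(2t\gamma^{2}s^{2})$ for a quadratic polynomial $P(y,t)$ in $y$; combining the remaining factor $g(y;t)/(2t\gamma^{2}s^{2})$ into the prefactor $(\sqrt{2\pi}\cdot 2\gamma^{3}s^{5/2}t)^{-1}$ recovers the displayed formula after algebraic simplification of $P(y,t)$ into the stated polynomial $(\ln(1/t))^{2}+\gamma^{2}\ln(1/t)-y^{2}$.

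The step I expect to be the hardest to justify cleanly is the first, namely the identification $\eta^{1,t}_{\omega}([0,1])\stackrel{d}{=}\eta(1)$. At mollification level, the kernel of $\omega^{1,t}_{\varepsilon}$ differs from that of $\omega^{1}_{\varepsilon}$ on the transition band $|x_{1}-x_{2}|\in[\varepsilon,\varepsilon/t]$, and one must argue that this discrepancy is absorbed by the Wick renormalization as $\varepsilon\to 0$. By contrast, the interchange of derivative and expectation in the last display is standard: the derivative $\int_{\ln(x/\eta(1))}^{\infty}\partial_{t}g(y;t)\,\dy$ is dominated, locally uniformly in $t$ on compact subsets of $(0,1)$, by a multiple of $\eta(1)^{-p}+\eta(1)^{q}$ for small $p,q>0$, both of which are integrable by the single-moment bounds for $\eta(1)$ quoted in the introduction. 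The restriction $t\in[0,1]$ is inherent to the method, as it is exactly the range in which $\lambda=t\in(0,1)$ is admissible in \propref{exactscaling}; handling $t>1$ would require a different mechanism.
\end{proofs}
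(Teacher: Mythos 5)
Your proposal follows essentially the same route as the paper's proof: use the lognormal exact-scaling law to rewrite $\Proba{Q_{\omega}(x)\leq t}=\Proba{x\leq t e^{\overline{\Omega}_{t}}\eta(1)}$ as an expectation of a Gaussian integral whose lower limit $\ln(x/\eta(1))$ is $t$-free, and then differentiate in $t$ under the expectation, justified by a dominated-convergence/Leibniz argument (the paper splits off the factor $e^{-y(1+\gamma^{2}/2)/\gamma^{2}}$ and bounds the remainder on $[0,1]$, while you dominate on compact subsets of $(0,1)$ by moments of $\eta(1)$ — a cosmetic difference). Your extra step identifying $\eta^{1,t}_{\omega}([0,1])$ in law with $\eta(1)$ is precisely the exact-scaling property of the $\omega$-field that the paper invokes implicitly when it writes $\eta(1)$ directly, so nothing essential differs.
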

\begin{remark}
One can then further use the density results for $\eta(1)$ developed in \citep{remy2020distribution} to obtain a deterministic formula.
\end{remark}
\begin{proof}[Proof of \Cref{densitylocal}]
We start by inverting and using scaling law (\Cref{exactscaling} with $\sigma^{2}_{t}:=\frac{\gamma^{2}}{2}\ln(\frac{1}{t})$):
\begin{eqalign}\label{eq:cdfofinverse}
\Proba{Q_{\omega}(x)\leq t}=&\Proba{x\leq te^{\overline{\Omega}_{t}}\eta(1) }\\=& \Expe{\int_{\ln\para{ \dfrac{x}{\eta(1)}  }}^{\infty}\frac{1}{\sqrt{2\pi \sigma^{2}_{t}}}\expo{-\frac{\para{y+\para{1+\frac{\gamma^{2}}{2}}\ln\frac{1}{t}  }^{2}}{2\sigma^{2}_{t}}} \dy}.
\end{eqalign}
Next we justify swapping the derivative in $t$ and integration. \\
The t-derivative of the above CDF is:
\begin{eqalign}
&\rho_{Q_{\omega}(x)}(t)=\Expe{\int_{\ln\para{ \dfrac{x}{\eta(1)}  }}^{\infty}g(t,y) \dy}.
\end{eqalign}
We split $g(t,y)$ into two factors
\begin{eqalign}
&\expo{-\frac{y\para{1+\frac{\gamma^{2}}{2}}}{\gamma^{2}}}\\
&{\footnotesize\text{$\cdot\para{\frac{1}{\sqrt{2\pi }2\gamma^{3} \para{ \ln(\frac{1}{t})}^{5/2}t }\expo{-\frac{y^{2}+\para{\para{1+\frac{\gamma^{2}}{2}}\ln\frac{1}{t}  }^{2}}{2\gamma^{2}\ln(\frac{1}{t})}}  \para{\para{\ln\frac{1}{t}}^{2}+\gamma^{2}\ln\frac{1}{t} -y^{2} } }.$}}
\end{eqalign}
The second factor is zero at $t=0$ and $t=1$ and so by continuity we have bounded in $[0,1]$ i.e. by extreme value theorem there is some $M>0$ s.t.
\begin{eqalign}
g(t,y)\leq \expo{-\frac{y\para{1+\frac{\gamma^{2}}{2}}}{\gamma^{2}}} \cdot M.
\end{eqalign}
We apply the measure theoretic-version of Leibniz rule \citep{measleib,flanders1973differentiation}.
\end{proof}
\section{Comparison formulas between scales}\label{sec:transitionformuals}
A key tool in \cite[equation (69)]{AJKS} is the relationship between measures from different scales $\eta^{n}$ and $\eta^{m}$ for $m\geq n$:
\begin{eqalign}\label{eq:scalerelationseta}
A_{I} \eta^{m}(I)\leq \eta^{n}(I)\leq  G_{I} \eta^{m}(I).
\end{eqalign}
where $A_{I}:=\inf_{y\in I} e^{\bar{U}^{n}_{m}(y)}$ and $B_{I}:=\sup_{y\in I} e^{\bar{U}^{n}_{m}(y)}$. We will need an analogous relationship as well.
\begin{proposition}\label{it:shiftscalingdiff}\textit{(Shifting and Scaling between scales a.s.})  Fix $x_{0}\in [0,1]$ and let $Q_{\eta_{x_{0}}}:\Rplus\to \Rplus$ be the inverse of the shifted measure $\eta_{x_{0}}(0,x):=\int_{0}^{x}e^{\bar{U}(x_{0}+s)}\ds=\eta(x_{0},x_{0}+x)$. Then we have the following identity for each increment over $(a,b)$:
\begin{eqalign}\label{eq:shiftcenterrelation}
Q_{\eta}(\eta(0,x_{0})+a,\eta(0,x_{0})+b)= Q_{\eta_{x_{0}}}(a,b)=Q_{\eta_{x_{0}}}(b)-Q_{\eta_{x_{0}}}(a).
\end{eqalign}
For $c,y>0$ we have for $Q=Q_{\eta}$,corresponding to field $U$, and $Q^{n}=Q_{\eta^{n}}$,corresponding to field $U^{n}$, the relations:
\begin{eqalign}\label{eq:scalerelinverse}
Q([0,y])=Q^{n}([0,\frac{y}{G_{n}(y)}]) \tand  Q([c,c+y ])=Q^{n}\para{\frac{1}{G_{n}(c)}[c,c+y\frac{G_{n}(c)}{G_{n}(c,c+y)}]},
\end{eqalign}
where the scaling constants are provided from the integral mean value theorem
\begin{eqalign}\label{eq:constantGNIMVT}
G_{n}(c,c+y)&:=\frac{\eta(Q(c),Q(c)+Q([c,c+y ]))}{\eta^{n}(Q(c),Q(c)+Q([c,c+y ]))}\\
&=\frac{y}{\eta^{n}(Q(c),Q(c+y) )}\\
&=\expo{\overline{U_{n}}(\theta_{c,c+y} )  }\tforsome \theta_{c,c+y}\in [Q(c),Q(c+y)]
\end{eqalign}
and we let $G_{n}(y):=G_{n}(0,y)$. We also have a version for the normalized inverse:
\begin{eqalign}\label{eq:scalerelinversenormalized}
Q([c,c+y]\eta(1))=Q^{n}\para{\frac{\eta(1)}{G_{n}(c)}\spara{c,c+y\frac{G_{n}(c)}{G_{n}([c,c+y])}}}.
\end{eqalign}
\end{proposition}
\begin{proof}
\pparagraph{Shift relation (\Cref{eq:shiftcenterrelation})}
That shift relation simply says that $g(x)=Q_{\eta}(\eta(0,x_{0}),\eta(0,x_{0})+x)$ is the inverse of the shifted measure $\eta_{x_{0}}(0,x)$:
\begin{eqalign}
 Q_{\eta}(\eta(0,x_{0}),\eta(0,x_{0})+\eta_{x_{0}}(0,x))= Q_{\eta}(\eta(0,x_{0}+x))-x_{0}=x_{0}+x-x_{0}=x.
\end{eqalign}
\pparagraph{Scaling over general interval $[c,c+y]$ (\Cref{eq:scalerelinverse})}
We have the relation
\begin{eqalign}\label{eq1:inverserelatio}
Q([\eta(a),\eta(a)+\eta(a,a+x) ])=x=Q^{n}([\eta^{n}(a),\eta^{n}(a)+\eta^{n}(a,a+x) ]).
\end{eqalign}
Let $a:=Q(x),x:=Q(c+y)-Q(c)$ and so by integral mean value theorem we have:
\begin{eqalign}
c=\eta(a)=\int_{0}^{a}e^{U_{n}^{1}(x)}\deta^{n}(x)=G_{n}(c)\eta^{n}(a)\tand y=\eta(a,a+x)=G_{n}(c,c+y)\eta^{n}(a,a+x),
\end{eqalign}
which implies the desired result.
\pparagraph{Normalized scaling relation \Cref{eq:scalerelinversenormalized}}
We start by fixing deterministic $c,c+y\in [0,1]$. Then by the bijectivity of $\frac{\eta(x)}{\eta(1)}$, we can find $a,a+x\in [0,1]$ so that:
\begin{eqalign}
c=\frac{\eta(a)}{\eta(1)}\tand y=\frac{\eta(a,a+x)}{\eta(1)}\doncl a=Q(c\eta(1))\tand x=Q((c,c+y)\eta(1)).
\end{eqalign}
By \Cref{eq:constantGNIMVT} the relation is
\begin{eqalign}
Q([c,c+y]\eta(1))=x=Q^{n}(\frac{\eta(1)}{G_{n}(c)}[c,c+y\frac{G_{n}(c)}{G_{n}([c,c+y])}]),
\end{eqalign}
which is the result.
\end{proof}
\subsection{Localization formula}
 We also have a localization formula in the spirit of the localization trick \cite[lemma 3.1]{rhodesvargas2019tail}.
\begin{proposition}\label{it:Girsanovinv}(Localization) For any continuous bounded function $F$ on $\Rplus^{2}$ we have for $x,s\in \Rplus$ and $\e\geq 0$:
\begin{eqalign}\label{eq:continboundfunctional}
\Expe{\int F(Q_{\e}(x),z)e^{\gamma U_{\e}(s)}\ds}=\Expe{\int F(Q_{\e}(x;s),z)\ds},
\end{eqalign}
where $Q_{\e}(x;s)$ is the inverse in a-variable of $\eta_{\e}(a;s)=\int_{0}^{a}e^{\gamma^{2}\Expe{U_{\e}(r)U_{\e}(s)}}e^{\gamma U_{\e}(r)}\dr$ .  Furthermore for $F(y,z):=\ind{s\leq y}$ we have:
\begin{eqalign}\label{eq:indicatorfunctional}
\int\Expe{\ind{s\leq Q_{\e}(x)}e^{\gamma U_{\e}(s)}}\ds=\int\Proba{s\leq Q_{\e}(x;0)}\ds.
\end{eqalign}
\end{proposition}
\begin{proof}[proof of \ref{it:Girsanovinv}: Localization]
 \cite[lemma 2.1]{aru2020gaussian} states that for any continuous bounded functional $F(X,z)$ of the entire Gaussian field $\set{X(r)}_{r\in \R}:=\set{U_{\e}(r)}_{r\in \R}$ and $z\in \Rplus$
\begin{eqalign}
\Expe{\int F(X,z)e^{\gamma U_{\e}(s)}\ds}=\Expe{\int F(X+\gamma \Expe{X(\cdot)U_{\e}(s)},z)\ds}.
\end{eqalign}
This implies \Cref{eq:continboundfunctional} provided that the inverse $Q_{X}(a)$ of $\eta_{X}(b):=\int_{0}^{b} e^{X(r)}\dr$ is also a continuous function of $X$ in the sup norm $\supl{r\geq 0}\abs{X(r)-Y(r)}$.\\
Using the layercake representation for two Gaussian fields $X,Y$, the $Q_{X}(a)-Q_{Y}(a)$ equals
\begin{eqalign}
&\abs{\int_{0}^{\infty}\ind{\eta_{X}(r)\leq a} -\ind{\eta_{Y}(r)\leq a}    \dr} \\
\leq&\int_{0}^{\infty}\ind{\eta_{X}(r)\leq a\leq \eta_{Y}(r)} +\ind{\eta_{Y}(r)\leq a\leq \eta_{X}(r)}   \dr.
\end{eqalign}
By imposing $\sup_{r\geq 0}\abs{X(r)-Y(r)}\leq \delta$ for small $\delta>0$, we upper bound the first indicator as follows
\begin{eqalign}
 \ind{\eta_{X}(r)\leq a\leq \eta_{Y}(r)}\leq \ind{\eta_{X}(r)\leq a\leq \eta_{X}(r)e^{\gamma\delta}}=\ind{ae^{-\gamma\delta}\leq\eta_{X}(r)\leq a}.
\end{eqalign}
Similarly, the second indicator is bounded by $\ind{a\leq\eta_{X}(r)\leq ae^{\gamma\delta}}$.
Returning to the inverse this becomes the upper bound
\begin{eqalign}
\abs{Q_{X}(a)-Q_{Y}(a)}\leq & Q_{X}(a)-Q_{X}( ae^{-\gamma\delta})   +Q_{X}(ae^{\gamma\delta} )-Q_{X}(a)   \\
=&Q_{X}(ae^{\gamma\delta} )-Q_{X}(ae^{-\gamma\delta} ).
\end{eqalign}
So we got the claimed continuity of the inverse by the regular continuity of the inverse.
\pparagraph{Proof of \Cref{eq:indicatorfunctional}} Here we fix $\e>0$. By inverting to $\eta$ and applying tilting lemma we have:
\begin{eqalign}
\Expe{\ind{s\leq Q_{\e}(x)}e^{\gamma U_{\e}(s)}}=\Expe{\ind{\eta_{\e}(s)\leq x}e^{\gamma U_{\e}(s)}}=\Proba{\eta_{\e}(s;s)\leq x},
\end{eqalign}
where $\eta_{\e}(a;b)=\int_{0}^{a}e^{\gamma^{2}\Expe{U_{\e}(r)U_{\e}(b)}}e^{\gamma U_{\e}(r)}\dr$. So we have by translation invariance and change of variables:
\begin{eqalign}
&\int_{0}^{s}\expo{\gamma^{2}\Expe{U_{\e}(0)U_{\e}(s-r)}+ \gamma U_{\e}(r)}\dr\\
=& \int_{0}^{s}\expo{\gamma^{2}\Expe{U_{\e}(0)U(r)}+ \gamma U_{\e}(s-r)}\dr.
\end{eqalign}
Then using that the covariance only depends on the difference $\Expe{U_{\e}(s-a)U_{\e}(s-b)}=\Expe{U_{\e}(a)U_{\e}(b)}$, we obtain
\begin{eqalign}
 \eta_{\e}(s;s)\eqdis \int_{0}^{s}\expo{\gamma^{2}\Expe{U_{\e}(0)U_{\e}(r)}+ \gamma U_{\e}(r)}\dr=\eta_{\e}(s;0).
\end{eqalign}
We finally take limit $\e\to 0$ at each side.
\end{proof}


\part{General Moments for the inverse}\label{part:momentinverse}
 In this section we study the moments for the upper truncated inverse $Q^{\delta}=Q^{\delta}_{U}$ where the truncation can be for arbitrarily large $\delta\in (0,\infty)$.

\section{Inverse moments }\label{inversemoments}
 We start with computing the moments for the inverse $Q^{\delta}(x)$ because here we can simply use the relation $\set{Q^{\delta}(x)\geq t}=\set{x\geq \eta^{\delta}(t)}$ and thus turn the question of positive/negative moments of $Q$ into the negative/positive moments of $\eta$.
\begin{proposition}
\label{INVERSE_MOMENTS}

The inverse has finite moments $\Exp[(Q^{\delta}(0,x))^{p}]<\infty $ for the interval
\begin{eqalign}
p\in \para{-\frac{(1+\frac{\gamma^{2}}{2})^{2}}{2\gamma^{2}}, \infty}.  \end{eqalign}
\begin{itemize}

\item Positive moments: For general non-negative moments $p\geq 0$ and constants $\delta,x\geq 0$, we have the upper bound for $Q^{\delta}$:
\begin{eqalign}\label{eq:positivemomentsinverseQx}
\Exp[(Q^{\delta}(0,x))^{p}]\lesssim\branchmat{\delta^{p-q} x^{q}+\delta^{p-\wt{q}}x^{\wt{q}}&\delta\geq 1\\ x^{q}+x^{\wt{q}} &\delta \leq 1},
\end{eqalign}
where  $p,q,\wt{q}$ satisfy $q+\frac{\gamma^{2}}{2}(q^{2}+q)<p<\wt{q}$.
\item Negative moments: For $p\geq 0$ and $\delta,x\geq 0$, we have the upper bound for $Q^{\delta}$:
\begin{eqalign}\label{eq:negativemomentsinverseQx}
\Expe{(Q^{\delta}(0,x))^{-p}}\lesssim \branchmat{\delta^{q-p} x^{-q}+\delta^{\wt{q}-p}x^{-\wt{q}}&\delta\geq 1\\ x^{-q}+x^{-\wt{q}}&\delta\leq 1 }.
\end{eqalign}
When $\delta\geq 1$ $p,q,\wt{q}$ satisfy the following: when $q,\wt{q}\in (0,1)$, we have $\zeta(q):=q-\frac{\gamma^{2}}{2}(q^{2}-q)>p>\zeta(\wt{q})$ and when $q,\wt{q}\in (1, \frac{2}{\gamma^{2}})$, we have $\zeta(q)>p>\wt{q}$. \\
When $\delta\leq 1$
and $q,\wt{q}\in (1,\frac{2}{\gamma^{2}})$, we have the same constraint $\zeta(q)>p>\wt{q}$ but when  $q,\wt{q}\in (0,1)$, we have $\zeta(q)>p>\zeta(\wt{q})$.
\end{itemize}
\end{proposition}
\begin{remark}
The left endpoint in $\para{-\frac{(1+\frac{\gamma^{2}}{2})^{2}}{2\gamma^{2}}, \infty}$ is given by the supremum of the moment exponent of the $\eta$ measure:
\begin{eqalign}
\frac{(1+\frac{\gamma^{2}}{2})^{2}}{2\gamma^{2}}=\supl{q\in (0,\frac{2}{\gamma^{2}})}\zeta(q),
\end{eqalign}
where $\zeta(q):=q-\frac{\gamma^{2}}{2}(q^{2}-q)$ is the multifractal exponent.
\end{remark}

\begin{remark}\label{rem:preciseexponents}One can obtain precise exponents by optimizing. In the positive moments \cref{eq:positivemomentsinverseQx}, we optimize
\begin{eqalign}
q+\frac{\gamma^{2}}{2}(q^{2}+q)<p\Rightarrow     q=\frac{1}{2}\para{\sqrt{1+\frac{4p+2}{\beta}+\frac{1}{\beta^{2}}  }-1-\frac{1}{\beta}   }-\epsilon,
\end{eqalign}
for small $\e>0$. In the negative moments \cref{eq:negativemomentsinverseQx}, we optimize
\begin{eqalign}
q-\frac{\gamma^{2}}{2}(q^{2}-q)>p\Rightarrow     q=\frac{1}{2}\para{\sqrt{1+\frac{-4p+2}{\beta}+\frac{1}{\beta^{2}}  }-1-\frac{1}{\beta}   }+\epsilon.
\end{eqalign}
\end{remark}
 For computing the moments of the inverse we will use the estimates for $\eta$ in the appendix \Cref{momentseta} but for $g=0$.
\begin{proof}[proof of \Cref{INVERSE_MOMENTS}]
\proofparagraph{The positive moment case $p\geq 0$: bounds for \sectm{$\Exp[(Q^{\delta}[0,x])^{p}]$} for $\delta\in [0,\infty)$}\\
We apply the scaling law for the inverse, layer-cake principle, invert and finally bound by Markov inequality for $q,\wt{q}>0$ to find
\begin{eqalign}
&\Exp[(Q[0,x/\delta])^{p}]\\
&=\int_{0}^{1}\Proba{(Q[0,x/\delta])^{p}\geq t}\dt +\int_{1}^{\infty}\Proba{(Q[0,x/\delta])^{p}\geq t}\dt\\
&=\int_{0}^{1}\Proba{\frac{1}{\eta[0,t^{1/p}]}\geq \frac{\delta}{x} }\dt +\int_{1}^{\infty}\Proba{\frac{1}{\eta[0,t^{1/p}]}\geq \frac{\delta}{x} }\dt\\
&\stackrel{Markov}{\leq} x^{q}\delta^{-q}\int_{0}^{1}\Expe{\para{\frac{1}{\eta[0,  t]}}^{q}}pt^{p-1}\dt +x^{\wt{q}}\delta^{-\wt{q}}\int_{1}^{\infty}\Expe{\para{\frac{1}{\eta[0,  t]}}^{\wt{q}}}pt^{p-1}\dt.
\end{eqalign}
Using the moment estimates for $\eta$ we have for $t\in [0,1]$
\begin{eqalign}
\int_{0}^{1}\Expe{\para{\frac{1}{\eta[0,  t]}}^{q}}pt^{p-1}\dt\leq c\int_{0}^{1} t^{-q}t^{-\frac{\gamma^{2}}{2}(q^{2}+q)}pt^{p-1}  \dt,
\end{eqalign}
this integral is finite only for $1-p+q+\frac{\gamma^{2}}{2}(q^{2}+q)<1\Leftrightarrow q+\frac{\gamma^{2}}{2}(q^{2}+q)<p$. For the integral over $[1,\infty]$ we similarly have
\begin{eqalign}
\int_{1}^{\infty}\Expe{\para{\frac{1}{\eta[0,   t]}}^{\wt{q}}}pt^{p-1}  \dt\leq c\int_{1}^{\infty}t^{-\wt{q}}pt^{p-1}  \dt,
\end{eqalign}
which is finite only for $1-p+\wt{q}>1\Leftrightarrow \wt{q}>p$.

\proofparagraph{The positive moment case $p\geq 0$: bounds for \sectm{$\Exp[(Q^{\delta}[0,x])^{p}]$} for \sectm{$\delta\leq 1$}}
Here we split based on $1$ again but keeping $\delta$:
\begin{eqalign}
\Exp[(Q^{\delta}[0,x])^{p}]&=\int_{0}^{\infty}\Proba{(Q^{\delta}[0,x])^{p}\geq t}\dt\\
&\leq x^{q}\int_{0}^{1}\Expe{\para{\frac{1}{\eta^{\delta}[0,  t]}}^{q}}pt^{p-1}\dt +x^{\wt{q}}\int_{1}^{\infty}\Expe{\para{\frac{1}{\eta^{\delta}[0,  t]}}^{\wt{q}}}pt^{p-1}\dt.
\end{eqalign}
We avoided splitting based on $\delta$ because we want to allow $\delta\to 0^{+}$. By the moments for $\eta^{\delta}(t)$ with $t\leq 1$, we have
\begin{eqalign}
\int_{0}^{1}\Expe{\para{\frac{1}{\eta^{\delta}[0,  t]}}^{q}}pt^{p-1}\dt\leq c\int_{0}^{1} t^{-q}t^{-\frac{\gamma^{2}}{2}(q^{2}+q)}pt^{p-1}  \dt,
\end{eqalign}
this integral is finite only for $1-p+q+\frac{\gamma^{2}}{2}(q^{2}+q)<1\Leftrightarrow q+\frac{\gamma^{2}}{2}(q^{2}+q)<p$. For the integral over $[1,\infty]$ we bound by
\begin{eqalign}
\int_{1}^{\infty}\Expe{\para{\frac{1}{\eta^{\delta}[0,   t]}}^{\wt{q}}}pt^{p-1}  \dt\leq c\int_{1}^{\infty}t^{-\wt{q}}pt^{p-1}  \dt,
\end{eqalign}
which is finite only for $1-p+\wt{q}>1\Leftrightarrow \wt{q}>p$.


\proofparagraph{The negative moment case $-p\leq 0$: bounds for \sectm{$\Exp[(Q^{\delta}[0,x])^{-p}]$} for \sectm{$\delta\in [0,\infty)$}}
We again apply the scaling law of the inverse $Q^{\delta}(x)\eqdis \delta Q(x/\delta)$, layer-cake principle, invert and bound by Markov inequality to find
\begin{eqalign}
&\Exp[(Q[0,x/\delta])^{-p}]\\
&=\int_{0}^{1}\Proba{(Q[0,x/\delta])^{-p}\geq t}\dt +\int_{1}^{\infty}\Proba{(Q[0,x/\delta])^{-p}\geq t}\dt\\
&=\int_{0}^{1}\Proba{\eta[0,t^{-1/p}]\geq \frac{x}{\delta} }\dt +\int_{1}^{\infty}\Proba{\eta[0,t^{-1/p}]\geq \frac{x}{\delta} }\dt\\
&\leq x^{-\wt{q}}\delta^{\wt{q}}\int_{1}^{\infty}\Expe{\para{\eta[0,  t]}^{\wt{q}}}pt^{-p-1}\dt +x^{-q}\delta^{q}\int_{0}^{1}\Expe{\para{\eta[0,  t]}^{q}}pt^{-p-1}\dt
\end{eqalign}
for some $q,\wt{q}\in (0,\frac{2}{\gamma^{2}})$ that we will next constrain. The integral over $[0,1]$ is bounded by
\begin{eqalign}
\int_{0}^{1}\Expe{\para{\eta[0,  t]}^{q}}pt^{-p-1}\dt\leq c\int_{0}^{1}  t^{\zeta(q)}pt^{-p-1}  \dt,
\end{eqalign}
this integral is finite only for $1+p-\para{q-\frac{\gamma^{2}}{2}(q^{2}-q)}<1\Leftrightarrow q-\frac{\gamma^{2}}{2}(q^{2}-q)>p$. The integral over $[1,\infty]$ is bounded by
\begin{eqalign}
\int_{1}^{\infty}\Expe{\para{\eta[0,  t]}^{q}}pt^{-p-1}\dt\leq c\int_{1}^{\infty} t^{\wt{q}}pt^{-p-1}  \dt,
\end{eqalign}
which is finite only for $1+p-\wt{q}>1\Leftrightarrow p>\wt{q}$. \\
 In the case $q,\wt{q}\in (0,1)$ we can do better for the integral over $[1,\infty]$
\begin{eqalign}
\int_{1}^{\infty}\Expe{\para{\eta[0,  t]}^{\wt{q}}}pt^{-p-1}\dt\leq c\int_{1}^{\infty} t^{\zeta(\wt{q}) }pt^{-p-1}  \dt,
\end{eqalign}
which is finite only for $1+p-\zeta(\wt{q})>1\Leftrightarrow p>\zeta(\wt{q})$.
\proofparagraph{The negative moment case $-p\leq 0$: bounds for \sectm{$\Exp[(Q^{\delta}[0,x])^{-p}]$} for \sectm{$\delta\leq 1$}}
We again apply the layer-cake principle, invert and bound by Markov inequality to find
\begin{eqalign}
\Exp[(Q^{\delta}[0,x])^{-p}]&=\int_{0}^{1}\Proba{(Q^{\delta}[0,x])^{-p}\geq t}\dt +\int_{1}^{\infty}\Proba{(Q^{\delta}[0,x])^{-p}\geq t}\dt\\
&\leq x^{-\wt{q}}\int_{1}^{\infty}\Expe{\para{\eta^{\delta}[0,  t]}^{\wt{q}}}pt^{-p-1}\dt+ x^{-q}\int_{0}^{1}\Expe{\para{\eta^{\delta}[0,  t]}^{q}}pt^{-p-1}\dt.
\end{eqalign}
For the integral term over $[0,1]$ we upper bound by
\begin{eqalign}
\int_{0}^{1}\Expe{\para{\eta^{\delta}[0,  t]}^{q}}pt^{-p-1}\dt\leq c\int_{0}^{1}  t^{q}t^{-\frac{\gamma^{2}}{2}(q^{2}-q)}pt^{-p-1}  \dt,
\end{eqalign}
this integral is finite only for $1+p-\para{q-\frac{\gamma^{2}}{2}(q^{2}-q)}<1\Leftrightarrow q-\frac{\gamma^{2}}{2}(q^{2}-q)>p$. For the integral over $[1,\infty)$ and case $q,\wt{q}\in [1,\infty)$, we upper bound by
\begin{eqalign}
\int_{1}^{\infty}\Expe{\para{\eta^{\delta}[0,  t]}^{q}}pt^{-p-1}\dt\leq c\int_{1}^{\infty} t^{\wt{q}}pt^{-p-1}  \dt,
\end{eqalign}
which is finite only for $1+p-\wt{q}>1\Leftrightarrow p>\wt{q}$. For the case $q,\wt{q}\in (0,1)$ we upper bound by
\begin{eqalign}
\int_{1}^{\infty}\Expe{\para{\eta^{\delta}[0,  t]}^{\wt{q}}}pt^{-p-1}\dt\leq c\int_{1}^{\infty} t^{\wt{q}}t^{-\frac{\gamma^{2}}{2}(\wt{q}^{2}-\wt{q})}pt^{-p-1}  \dt,
\end{eqalign}
which is finite for $1+p-\para{\wt{q}-\frac{\gamma^{2}}{2}(\wt{q}^{2}-\wt{q})}>1\Leftrightarrow p>\wt{q}-\frac{\gamma^{2}}{2}(\wt{q}^{2}-\wt{q})$.
\end{proof}

\section{Randomly shifted GMC }
In this section we study the tail/small-ball and moments for the shifted GMC $\etamu{Q^{\delta}_{a},Q^{\delta}_{a}+t}{\delta}$ for all $t>0$. We will need these in studying the moments of the inverse increments due to the relation
\begin{eqalign}
\set{Q(a,a+x)>t}=\set{x>\eta(Q_{a},Q_{a}+t)}.
\end{eqalign}
For large $t\geq 2\delta$ we can just use \Cref{deltaSMP} but for small $0\leq t\leq 2\delta$ the correlation increases. This is because we see that by splitting around $x=Q^{\delta}(a)$
\begin{eqalign}
U(Q^{\delta}(a)+t)=U(Q^{\delta}(a)+t)\cap U(Q^{\delta}(a))+U(Q^{\delta}(a)+t)\setminus U(Q^{\delta}(a)),
\end{eqalign}
we get $U(Q^{\delta}(a)+t)\setminus U(Q^{\delta}(a))\neq \varnothing$ and so $\etamu{Q^{\delta}_{a},Q^{\delta}_{a}+t}{\delta_{0}}$ is correlated to $Q^{\delta}(a)$. The strategy here is to understand the "density" of the inverse (i.e. local behaviour $\set{a_{k}\leq Q^{\delta}_{a}\leq a_{k+1}}$ for some $a_{k}$) by translating to the local behaviour of GMC. We also introduce a parameter $\rho$ for computational purposes.
\begin{proposition}\label{prop:shiftedGMCmoments}
Let $t,\delta_{0},\delta,a>0$ and $\rho\in (0,1]$ with $\delta_{0}\geq \delta$.
\begin{itemize}

    \item  For strictly increasing nonnegative function $F_{1}$ we have
 \begin{eqalign}
&\Expe{F_{1}\para{\etamu{Q^{\delta}_{a},Q^{\delta}_{a}+t}{\delta_{0}}}} \\&\leq\Expe{F_{1}\para{\supl{0\leq T\leq \rho \delta}\etamu{T,T+t}{\delta_{0}}} }  C_{sup}(a,\delta,\rho),
\end{eqalign}
where 
\begin{eqalign}
 C_{sup}(a,\delta,\rho):= c\cdot  \para{1+c_{1}\frac{(a/\delta)^{q_{1}}}{\para{\rho}^{-\zeta(-q_{1})}}+c_{2}\frac{(a/\delta)^{q_2}}{\para{\rho}^{q_{2}}}.   \rho^{(bq_{2}-1)/b} },
\end{eqalign}
for $b\in (0,1)$, $-b\zeta(-q_{1})>1$ and $bq_{2}>1$ and $c>0$ is universal. In the context of \cref{prop:decouplingsplit}, we also have the bound
 \begin{eqalign}
&\Expe{F_{1}\para{\etamu{Q^{\delta}_{a},Q^{\delta}_{a}+t}{\delta_{0}}}} \\\leq&\Expe{F_{1}\para{\supl{0\leq T\leq \rho \delta}\para{\int_{0}^{t}e^{\overline{U^{\delta_{0}}(T+s)\cap U^{\delta_{0}}(T)}}\deta_{\tilde{U}}^{\delta_{0}}(s)}} }  C_{sup}(a,\delta,\rho).
\end{eqalign}
 \item For strictly decreasing positive function $F_{2}$ we have
\begin{eqalign}\label{shiftednegativeappinf}
&\Expe{F_{2}\para{\etamu{Q^{\delta}_{a},Q^{\delta}_{a}+t}{\delta_{0}}}}\\
\leq &\para{\Expe{\para{F_{2}\para{\infl{0\leq T\leq \rho\delta}\etamu{T,T+t}{\delta_{0}}}}^{1+\e}}}^{1+\e}C_{inf}(\alpha,\delta,\delta_{0},\rho),
\end{eqalign}
where
\begin{eqalign}
 &C_{inf,1}(\alpha,\delta,\delta_{0},\rho):= c\cdot \para{1+\wt{c}_{1}\frac{(a/\delta)^{\wt{q}_2}}{(\rho\frac{\delta_{0}}{\delta})^{\wt{q}_{2}}} \para{\rho}^{\frac{(c\wt{q}_{2}-1)}{c}}},\\
 &C_{inf,2}(\alpha,\delta,\delta_{0},\rho):=c\cdot\frac{(a/\delta)^{\wt{q}_{1}/\wt{q}_{12}}}{(\rho\frac{\delta_{0}}{\delta})^{-\zeta(-\wt{q}_{1})/\wt{q}_{12}}},\\
 &C_{inf}(\alpha,\delta,\delta_{0},\rho):=\max_{i=1,2}\para{C_{inf,i}(\alpha,\delta,\delta_{0},\rho)},
\end{eqalign}
for $c\in (0,1)$, $-c\zeta(-\wt{q}_{1})/\wt{q}_{12}>1$, $c\wt{q}_{2}>1$ and
$\dfrac{1}{\wt{q}_{11}}+\dfrac{1}{\wt{q}_{12}}=1$ and we took $\wt{q}_{11}=1+\e$ for arbitrarily small $\e>0$, and $c>0$ is universal.

\end{itemize}
\end{proposition}
\begin{remark}
We see that if we take $a\leq \delta$ the constants are uniformly bounded. This is reasonable since as $\delta\to 0$, we simply get Lebesgue measure.
\end{remark}
 In the \Cref{maxminmodGMC}, we study the above max/min quantities for GMC in the specific cases of moments and tail/small-ball. So we have the following corollary of \Cref{prop:shiftedGMCmoments} in the case of moments.
\begin{corollary}\label{cor:shiftedGMCmoments}
Let $t>0$.
\begin{itemize}
    \item  For positive moments $p\in [1,\beta^{-1})$ we have
 \begin{eqalign}
\Expe{\para{\etamu{Q^{\delta}_{a},Q^{\delta}_{a}+t}{\delta_{0}}}^{p}} &\leq
t^{\alpha(p)}C_{sup}(a,\delta,\rho),
\end{eqalign}
where $\alpha(p):=\zeta(p)-1$ when $t\leq \delta_{0}$ and $\alpha(p):=p$ when $t\geq \delta_{0}$.

\item  For positive moments $p\in (0,1)$  we have
 \begin{eqalign}\label{eq:momentsp01shifted}
&\Expe{\para{\etamu{Q^{\delta}_{a},Q^{\delta}_{a}+t}{\delta_{0}}}^{p}} \\
&\leq
C_{sup}(a,\delta,\rho)\branchmat{t^{p(1-12\sqrt{2}\gamma)}\para{\ln\frac{1}{t}}^{p/2} & t\leq \frac{\rho \delta}{e}\\ \para{\ln\frac{1}{t}}^{p/2} & t\geq \frac{\rho \delta}{e}}.
\end{eqalign}
For simplicity we further bound by $t^{p\alpha}$ for $\alpha=1-12\sqrt{2}\gamma-\epsilon$ for small $\e>0$ to avoid carrying the log-term around.

 \item  For negative moments we have
\begin{eqalign}\label{shiftednegativeappinfmom}
\Expe{\para{\etamu{Q^{\delta}_{a},Q^{\delta}_{a}+t}{\delta_{0}}}^{-p} } \leq &t^{\alpha_{\e}(p)}C_{inf}(\alpha,\delta,\delta_{0},\rho),
\end{eqalign}
where $\alpha_{\e}(p):=\frac{\zeta(p(1+\e))-1}{1+\e}$ for $t\leq 2\delta_{0}$ and $\alpha_{\e}(p):=p$ for $t\geq 2\delta_{0}$.
\end{itemize}
\end{corollary}

\begin{remark}
It is unclear as in \Cref{differencetermunshifted} whether there are sharper techniques that could eliminate the $a-$dependence on the bounds (i.e. translation invariant bounds).
\end{remark}
\subsection{Proof of \sectm{\Cref{prop:shiftedGMCmoments}}}
\begin{proof}
We start with decomposing the range of $Q^{\delta}_{a}$ using a diverging strictly increasing sequence $\set{a_{k}}_{k\geq 0}$, i.e. $a_{k}:=\rho\delta k^{b},k\geq 0$ for some $b\in (0,1)$
\begin{eqalign}
&\Expe{F\para{\etamu{Q^{\delta}_{a},Q^{\delta}_{a}+t}{\delta_{0}}}}\\
=&\sum_{k\geq 0}\Expe{F\para{\etamu{Q^{\delta}_{a},Q^{\delta}_{a}+t}{\delta_{0}}} \ind{a_{k}\leq Q^{\delta}_{a}\leq a_{k+1}}},
\end{eqalign}
where $F$ is either an increasing or decreasing function of the measure GMC.
\proofparagraph{Case of an increasing function F}
For the increasing case, we bound by the supremum.
\begin{multline}
\sum_{k\geq 0}\Expe{F\para{\etamu{Q^{\delta}_{a},Q^{\delta}_{a}+t}{\delta_{0}}} \ind{a_{k}\leq Q^{\delta}_{a}\leq a_{k+1}}}\\
\leq \sum_{k\geq 0}\Expe{\supl{T\in [a_{k},a_{k+1}]}F\para{\etamu{T,T+t}{\delta_{0}}} \ind{a_{k}\leq Q^{\delta}_{a}\leq a_{k+1}}}.
\end{multline}
At this step, we can instead use the decoupling described in \cref{prop:decouplingsplit}. Meaning that we can bound by
\begin{eqalign}
&\Expe{F\para{\etamu{Q^{\delta}_{a},Q^{\delta}_{a}+t}{\delta_{0}},Q^{\delta}_{a}+t}\ind{a_{k}\leq Q^{\delta}_{a}\leq a_{k+1} }}\\
\leq &\Expe{\supl{T\in [a_{k},a_{k+1}]}F\para{\int_{0}^{t}e^{\overline{U(T+s)\cap U(T)}}\deta_{\tilde{U}}(s)} \ind{a_{k}\leq Q^{\delta}_{a}\leq a_{k+1} }}.
\end{eqalign}
When $a_{k}=0$, we bound by $1$. When $a_{k}\geq \delta$, we keep one of the upper bounds and apply FKG inequality \Cref{FKGineq}
\begin{eqalign}
&\Expe{\supl{T\in [a_{k},a_{k+1}]}F\para{\etamu{T,T+t}{\delta_{0}}}  \ind{\eta^{\delta}(a_{k})\leq a}}\\
&\leq \Expe{\supl{T\in [a_{k},a_{k+1}]}F\para{\etamu{T,T+t}{\delta_{0}}}  }\Proba{\eta^{\delta}(a_{k})\leq a}.
\end{eqalign}
When $0< a_{k}\leq \delta$, we bound by using the scaling law $\eta^{\delta}(x)\eqdis \delta\eta^{1}(x/\delta)$ and Markov for $q_{1}>0$
\begin{eqalign}
\Proba{\eta^{\delta}(a_{k})\leq a}\leq \frac{(a/\delta)^{q_{1}}}{(a_{k}/\delta)^{-\zeta(-q_{1})}}
\end{eqalign}
and when $a_{k}\geq \delta$, we bound by a different exponent
\begin{eqalign}\label{eq:summterm}
\Proba{\eta^{\delta}(a_{k})\leq a}\leq \frac{(a/\delta)^{q_2}}{(a_{k}/\delta)^{q_{2}}},
\end{eqalign}
for some $q_{1},q_{2}>0$.
For $k\geq 0$ by subadditivity of $x^{b}$ we have
\begin{eqalign}
a_{k+1}-a_{k}\leq \rho\delta.
\end{eqalign}
For $k=0$, we bound by one 1. When $\rho\delta\leq a_{k}\leq \delta$, we get the sum
\begin{eqalign}
 \frac{(a/\delta)^{q_{1}}}{\para{\rho}^{-\zeta(-q_{1})}}\sum_{\rho\delta\leq a_{k}\leq \delta}\frac{1}{k^{-b\zeta(-q_{1})}}\leq \frac{(a/\delta)^{q_{1}}}{\para{\rho}^{-\zeta(-q_{1})}} ,
\end{eqalign}
where in order to have summability we require $-b\zeta(-q_{1})>1$. When $a_{k}\geq \delta$, we bound by using Markov for $q_{2}>0$
\begin{eqalign}
\sum_{a_{k}\geq \delta}\Proba{\eta^{\delta}(a_{k})\leq a}\leq\sum_{a_{k}\geq \delta} \frac{(a/\delta)^{q_2}}{(a_{k}/\delta)^{q_{2}}}=\frac{(a/\delta)^{q_2}}{\para{\rho}^{q_{2}}}\sum_{a_{k}\geq \delta}\frac{1}{k^{bq_{2}}}\leq \frac{(a/\delta)^{q_2}}{\para{\rho}^{q_{2}}} \rho^{-(1-bq_{2})/b},
\end{eqalign}
where in order to have summability we require $bq_{2}>1$.
\proofparagraph{Case of a decreasing function F}
Here we upper bound the summand by
\begin{multline}
\sum_{k\geq 0}\Expe{F\para{\etamu{Q^{\delta}_{a},Q^{\delta}_{a}+t}{\delta_{0}}} \ind{a_{k}\leq Q^{\delta}_{a}\leq a_{k+1}}}\\
\leq \sum_{k\geq 0}\Expe{F\para{\infl{T\in [a_{k},a_{k+1}]}\etamu{T,T+t}{\delta_{0}}} \ind{a_{k}\leq Q^{\delta}_{a}\leq a_{k+1}}}.
\end{multline}
For the case $a_{k}\geq 2\delta$, we simply remove a delta part $\eta^{\delta}(a_{k}-\delta )$ to decouple them:
\begin{eqalign}
&\Expe{F\para{\infl{T\in [a_{k},a_{k+1}]}\etamu{T,T+t}{\delta_{0}}} \ind{\eta^{\delta}(a_{k})\leq a}} \\ \leq& \Expe{F\para{\infl{T\in [a_{k},a_{k+1}]}\etamu{T,T+t}{\delta_{0}}} \ind{\eta^{\delta}(a_{k}-\delta)\leq a}}\\
=&\Expe{F\para{\infl{T\in [a_{k},a_{k+1}]}\etamu{T,T+t}{\delta_{0}}} }\Proba{\eta^{\delta}(a_{k}-\delta)\leq a}.
\end{eqalign}
 For small $a_{k}\in (0, 2\delta]$,
we just apply \Holder to avoid having $a$ in the denominator
\begin{eqalign}\label{negativeunshiftedmom}
\para{\Expe{\para{F\para{\infl{T\in [a_{k},a_{k+1}]}\etamu{T,T+t}{\delta_{0}}}}^{\wt{q}_{11} }}}^{1/\wt{q}_{11}} \para{\Proba{\eta^{\delta}(a_{k})\leq a} }^{1/\wt{q}_{12}}
\end{eqalign}
for $\dfrac{1}{\wt{q}_{11}}+\dfrac{1}{\wt{q}_{12}}=1$. By translation invariance since $a_{k+1}-a_{k}\leq \rho\delta$ we have
\begin{eqalign}
\eqref{negativeunshiftedmom}\leq \para{\Expe{\para{F\para{\infl{T\in [0,\rho\delta]}\etamu{T,T+t}{\delta_{0}}}}^{\wt{q}_{11} }}}^{1/\wt{q}_{11}} \para{\Proba{\eta^{\delta}(a_{k})\leq a} }^{1/\wt{q}_{12}}
\end{eqalign}
As above for all $k\geq 0$, we have $a_{k+1}-a_{k}\leq \rho\delta$. When $k=0$, we  bound by one. When $a_{k}\geq 2\delta$, we bound by Markov inequality for $\wt{q}_2>0$
\begin{eqalign}
\sum_{a_{k}\geq 2\delta}\Proba{\eta^{\delta}(a_{k}-\delta)\leq a}\leq\frac{(a/\delta)^{\wt{q}_2}}{(\rho\frac{\delta}{\delta})^{\wt{q}_{2}}}\sum_{a_{k}\geq 2\delta} \frac{(a/\delta)^{\wt{q}_2}}{( k^{b}-\rho^{-1})^{\wt{q}_{2}}}\leq\frac{(a/\delta)^{\wt{q}_2}}{(\rho)^{\wt{q}_{2}}} \para{\rho}^{\frac{-(1-b\wt{q}_{2})}{b}},
\end{eqalign}
where in order to have summability we require $c\wt{q}_{2}>1$. In the case of $0< a_{k}\leq 2\delta\Leftrightarrow 1\leq k\leq (\frac{2}{\rho})^{1/b}$, we use the \Holder approach and similarly get the sum
\begin{eqalign}
&\sum_{0<a_{k}\leq2 \delta}\para{\Proba{\eta^{\delta}(a_{k})\leq a} }^{1/\wt{q}_{12}}\\\leq &\frac{(a/\delta)^{\wt{q}_{1}/\wt{q}_{12}}}{(\rho)^{-\zeta(-\wt{q}_{1})/\wt{q}_{12}}}\cdot \para{ \sum_{0<a_{k}\leq \delta }\frac{1}{k^{-b\zeta(-\wt{q}_{1})/\wt{q}_{12}}}+\frac{(\rho)^{-\zeta(-\wt{q}_{1})/\wt{q}_{12}}}{(\rho)^{\wt{q}_{1}/\wt{q}_{12}}} \sum_{\delta\leq a_{k}\leq 2 \delta }\frac{1}{k^{b\wt{q}_{1}/\wt{q}_{12}}}}\\
\leq  &\frac{(a/\delta)^{\wt{q}_{1}/\wt{q}_{12}}}{(\rho)^{-\zeta(-\wt{q}_{1})/\wt{q}_{12}}},
\end{eqalign}
where there is no issue for summability but if $\rho\to 0$,  we would require $-b\zeta(-\wt{q}_{1})/\wt{q}_{12}>1$. \\
Finally, for arbitrarily small $\e>0$, we can take $\wt{q}_{11}=1+\e$. This is possible because we can find $\wt{q}_{1}$ large enough so that $-b\zeta(-\wt{q}_{1})>\wt{q}_{12}$ and $\frac{1}{\wt{q}_{12}}+\frac{1}{1+\e}=1$.
\end{proof}

\section{Inverse increments }\label{inversemomentsincrements}
Here we study moments of $Q^{\delta}(a,a+x)$. We conjecture that the moments of $Q(a,a+x)$ are same to those of the unshifted inverse $Q(x)$ and multiplied by the shift $a$. By letting $a=0$, as in \Cref{INVERSE_MOMENTS} we can at the very most only consider $p\in \para{-\frac{(1+\frac{\gamma^{2}}{2})^{2}}{2\gamma^{2}}, \infty}$.

\begin{proposition}\label{prop:momentsofshiftedinverse}
Let $p\in \left(-\dfrac{\left(1+\frac{\gamma^{2}}{2}\right)^{2}}{2\gamma^{2}}, \infty \right)$, the height $\delta\leq 1$ and $a,x>0$.
\begin{itemize}
    \item (Positive moments) for all $p>0$
\begin{eqalign}
\Expe{(Q^{\delta}(a,a+x))^{p}}\leq& c_{1}x^{p_{1}}C_{pos}(a,\delta)+c_{2}x^{p_{2}},
\end{eqalign}
for $p_{1},p_{2}$ constrained as 
\begin{eqalign}
 \zeta(-p_{1})+p>1\tand p_{2}>p.   
\end{eqalign}

\item (Negative moments)For all  $p\in \left(0,\dfrac{\left(1+\frac{\gamma^{2}}{2}\right)^{2}}{2\gamma^{2}} \right)$
\begin{eqalign}
\Expe{(Q^{\delta}(a,a+x))^{-p}}\leq& \tilde{c}_{1}x^{-\tilde{p}_{1}}+\tilde{c}_{2}x^{-\tilde{p}_{2}},
\end{eqalign}
for $\tilde{p}_{1},\tilde{p}_{2}$ constrained as
\begin{eqalign}
\zeta(\tilde{p}_{1})>p+1    \tand     p>\tilde{p}_{2}.
\end{eqalign}
\end{itemize}
The constants dependence is
\begin{eqalign}
C_{pos}(a,\delta)&:=\maxp{\delta^{\zeta(-p_{1})+p-2},C_{inf,1}(\alpha,\delta,\rho),C_{inf,2}(\alpha,\delta,\rho) }   \\
C_{neg}(a,\delta)&:=C_{sup}(a,\delta,\rho)
\end{eqalign}
with the constants $C_{inf,1}$ , $C_{inf,2}$,$C_{sup}$ coming from \Cref{prop:shiftedGMCmoments}. 
\end{proposition}
\begin{remark}
Here one can again obtain precise exponents by optimizing. In the positive moments \cref{eq:positivemomentsinverseQx}, we optimize
\begin{eqalign}
q+\frac{\gamma^{2}}{2}(q^{2}+q)+1<p\Rightarrow     q=\frac{1}{2}\para{\sqrt{1+\frac{4p-2}{\beta}+\frac{1}{\beta^{2}}  }-1-\frac{1}{\beta}   }-\epsilon,
\end{eqalign}
for small $\e>0$. The only difference is $4p+2\to 4p-2$. In the negative moments \cref{eq:negativemomentsinverseQx}, we optimize
\begin{eqalign}
q-\frac{\gamma^{2}}{2}(q^{2}-q)>p+1\Rightarrow     q=\frac{1}{2}\para{-\sqrt{1+\frac{-4p-2}{\beta}+\frac{1}{\beta^{2}}  }+1+\frac{1}{\beta}   }+\epsilon.
\end{eqalign}
The only difference is $-4p+2\to -4p-2$.
\end{remark}

\begin{remark}
Compared to the inverse moments \Cref{INVERSE_MOMENTS} at least one major difference is the appearance of the shift $a>0$ in the bound.As mentioned in \Cref{differencetermunshifted}, it is unclear if this is a fundamental aspect of GMC or whether there are sharper bounds. The evidence so far points to growth in $a>0$. \\
In terms of the appearance of $\delta$ on the bounds $C_{pos},C_{nega}$ can be uniform in $\delta\leq 1$: the constants $C_{inf,1}$,$C_{inf,2}$,$C_{sup}$ are uniformly bounded when $a\leq \delta$ and we can choose constants $p,\tilde{p}$ to have the exponents of $\delta$ in $C_{pos}$,$C_{nega}$ be positive.
\end{remark}

\begin{proof}
\pparagraph{Positive moments of the inverse increments for $\delta\leq 1$}As in inverse moments \Cref{INVERSE_MOMENTS}, we similarly use layercake representation and Markov to upper bound in terms of the shifted $\eta$:
\begin{eqalign}\label{positivemoments}
&\Expe{\para{Q^{\delta}(a,a+x)}^{p}}\\
=&\int_{0}^{B_{\delta}}\Proba{x\geq \eta^{\delta}\para{ Q^{\delta}_{a}, Q^{\delta}_{a}+t} }pt^{p-1}\dt +\int_{B_{\delta}}^{\infty}\Proba{x\geq \eta^{\delta}\para{ Q^{\delta}_{a}, Q^{\delta}_{a}+t} }pt^{p-1}\dt,
\end{eqalign}
where $B_{\delta}:=\minp{2,\maxp{2\delta,1}}\in [1,2]$. This means that $B_{\delta}-\delta=\minp{2-\delta,\maxp{\delta,1-\delta}}\geq \frac{1}{2}$ when $\delta\leq 1$ and so $t\in [B_{\delta}-\delta,\infty)$ is bounded away from zero. First we study the $[B_{\delta},\infty)$-integral case. Here we will use the \Cref{deltaSMP} by lower bounding: 
\begin{eqalign}
 \Proba{x\geq \eta^{\delta}\para{ Q^{\delta}_{a}, Q^{\delta}_{a}+t} }\leq \Proba{x\geq \eta^{\delta}\para{ Q^{\delta}_{a}+\delta, Q^{\delta}_{a}+t}  }= \Proba{x\geq\eta^{\delta}\para{ 0, t-\delta} }.
\end{eqalign}
Therefore, by Markov inequality for $p_{2}>0$
\begin{eqalign}
 \int_{B_{\delta}}^{\infty}\Proba{x\geq\eta^{\delta}\para{ 0, t-\delta} }pt^{p-1}\dt\leq x^{p_{2}}\int_{B_{\delta}-\delta}^{\infty}\Expe{\para{\eta^{\delta}\para{0,t}}^{-p_{2}}}p(t+\delta)^{p-1}\dt
\end{eqalign}
and using the estimate $\Expe{\para{\eta^{\delta}[0,  t]}^{-p_{2}}}\leq c_{1} t^{-p_{2}}$, we require $1-p+p_{2}>1\doncl p_{2}>p$ to get a finite integral. \\
 For the first integral over $t\in [0,B_{\delta}]$, we require a bound on the  shifted moments $\Expe{\para{ \etamu{Q^{\delta}_{a},Q^{\delta}_{a}+t}{\delta}}^{-q } }$ and so we use \Cref{cor:shiftedGMCmoments}. In particular, here we still need to bound
\begin{eqalign}\label{eq:positivesmallt}
\int_{0}^{B_{\delta}}\Proba{x\geq \eta^{\delta}\para{ Q^{\delta}_{a}, Q^{\delta}_{a}+t}}t^{p-1}\dt.    
\end{eqalign}
For the integral over $0\leq t\leq 2\delta$, we apply Markov for $p_{1}>0$ in \Cref{eq:positivesmallt} to get an upper bound: 
\begin{eqalign}
\int_{0}^{\delta}t^{ \frac{\zeta(-p_{1}(1+\e))-1}{(1+\e)}}t^{p-1}\dt.    
\end{eqalign}
We get the following constraint:
\begin{eqalign}
\frac{\zeta(-p_{1}(1+\e))-1}{(1+\e)}+p-1>-1.  
\end{eqalign}
 For the remaining integral over $2\delta\leq t\leq B_{\delta}$, we get a $\delta^{-p_{1}-1+p-1}$.


\pparagraph{Negative moments of the inverse increments for $\delta\leq 1$}
Here we similarly obtain:
\begin{eqalign}
\Expe{\para{Q^{\delta}(a,a+x)}^{-p}}\leq&\int_{0}^{B_{\delta}}\Proba{ \eta^{\delta}\para{ Q^{\delta}_{a}, Q^{\delta}_{a}+t} \geq x}t^{-p-1}\dt\\
&+x^{-\tilde{p}_{2}}\int_{B_{\delta}}^{\infty}\Expe{\para{ \etamu{Q^{\delta}_{a},Q^{\delta}_{a}+t}{\delta}}^{\tilde{p}_{2} } }pt^{-p-1}\dt. 
\end{eqalign}
For the second integral since $\delta\leq 1$ we separate the part away from $\delta$
\begin{eqalign}
&p2^{(\tilde{p}_{2}-1)\vee 0}\int_{B_{\delta}}^{\infty}\para{\Expe{\para{ \etamu{Q^{\delta}_{a},Q^{\delta}_{a}+\delta}{\delta}}^{\tilde{p}_{2} } }}\frac{\dt}{t^{p+1}}\\
&+p2^{(\tilde{p}_{2}-1)\vee 0}\int_{B_{\delta}}^{\infty}\para{\Expe{\para{ \etamu{Q^{\delta}_{a}+\delta,Q^{\delta}_{a}+t}{\delta}}^{\tilde{p}_{2} } }}\frac{\dt}{t^{p+1}},
\end{eqalign}
For the second term we use the \Cref{deltaSMP} and \Cref{momentseta} to bound
\begin{eqalign}
\Expe{\para{ \etamu{Q^{\delta}_{a}+\delta,Q^{\delta}_{a}+t}{\delta}}^{\tilde{p}_{2} } }=&\Expe{\para{ \etamu{0,t-\delta}{\delta}}^{\tilde{p}_{2} } }\\
\leq &c\maxp{(t-\delta)^{\tilde{p}_{2}\para{1-\frac{\gamma^{2}}{2}(\tilde{p}_{2}-1)}},(t-\delta)^{\tilde{p}_{2}}    }.
\end{eqalign}
Since the lower bound is $B_{\delta}\geq 1$, if $\delta\to 0^{+}$, there is no risk of a divergent upper bound. So as in the inverse moments, we simply require $p>\tilde{p}_{2}$ for finiteness. For the first integral over $t\in [0,B_{\delta}]$, we require a bound on the  shifted moments $\Expe{\para{ \etamu{Q^{\delta}_{a},Q^{\delta}_{a}+t}{\delta}}^{q } }$. In particular, here we still need to bound
\begin{eqalign}
\int_{0}^{B_{\delta}}\Proba{ \eta^{\delta}\para{ Q^{\delta}_{a}, Q^{\delta}_{a}+t} \geq x}pt^{-p-1}\dt    
\end{eqalign}
using \Cref{cor:shiftedGMCmoments} as above. For the integral over $[0,\delta]$, we apply Markov for get the constraint 
\begin{eqalign}
\zeta(\tilde{p}_{1})-1-p-1>-1\doncl   \zeta(\tilde{p}_{1})>1+p.
\end{eqalign}
For the integral $[1,B_{\delta}]$ there is no constraint. 

\end{proof}
\subsection{Lower truncated inverse}
In this section we briefly mention the moments bounds for the lower truncated inverse $Q_{n}$. For the lower truncated shifted GMC $\eta_{n}$ we have the following analogue.
\begin{proposition}\label{shiftedmomentslowtrunc}
In the case of $F_{1}(x):=x^{p}$ for $p\in[1,\frac{\gamma^{2}}{2})$ and $F_{2}(x):=x^{-p}$ for $p>0$, the lower truncated shifted $F_{1}\para{\eta_{n}\spara{Q_{n}(a),Q_{n}(a)+t}}$ and $F_{2}\para{\eta_{n}\spara{Q_{n}(a),Q_{n}(a)+t}}$ satisfy the same bounds as in \Cref{prop:shiftedGMCmoments}.
\end{proposition}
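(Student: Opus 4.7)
\begin{proofs}[Proof plan for \cref{shiftedmomentslowtrunc}]
The plan is to transport the proof of \cref{prop:shiftedGMCmoments} line by line to the lower truncated setting, replacing the upper truncated objects $\eta^{\delta}$, $Q^{\delta}$ by their lower truncated analogues $\eta_{n}$, $Q_{n}$. I will begin as in the upper truncated proof by decomposing the range of $Q_{n}(a)$ over a strictly increasing, diverging sequence $\{a_{k}\}_{k\geq 0}$ of the form $a_{k}:=\rho\delta_{n} k^{b}$ with $b\in(0,1)$:
\begin{equation*}
\Expe{F\para{\eta_{n}\spara{Q_{n}(a),Q_{n}(a)+t}}}=\sum_{k\geq 0}\Expe{F\para{\eta_{n}\spara{Q_{n}(a),Q_{n}(a)+t}}\,\ind{a_{k}\leq Q_{n}(a)\leq a_{k+1}}}.
\end{equation*}
For $F$ increasing I will bound the summand by the corresponding supremum over $T\in[a_{k},a_{k+1}]$, while for $F$ decreasing by the corresponding infimum, exactly as in \cref{prop:shiftedGMCmoments}. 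The event $\{a_{k}\leq Q_{n}(a)\leq a_{k+1}\}$ is controlled by Markov's inequality applied to $\eta_{n}(a_{k})\leq a$ or $a\leq \eta_{n}(a_{k+1})$ respectively.

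The key ingredients I need to replace are (i) FKG for $\eta_{n}$, (ii) moments of $\eta_{n}[0,t]$ in the form used in \cref{momentseta}, and (iii) sup/inf modulus estimates analogous to \cref{prop:maxmoduluseta} and \cref{prop:minmodeta}. For (i), since $U_{n}$ is still a centered Gaussian field with nonnegative covariance on every pair $(x,y)$, FKG applies to $\eta_{n}$ in the same way it applies to $\eta$, so the decoupling step \cref{FKGineq} goes through unchanged. For (ii), the lower truncated moments are bounded by the fully regularized moments via the martingale inequality $\Expe{\eta_{n}[0,t]^{q}}\leq \Expe{\eta[0,t]^{q}}$ for $q\in[1,\frac{2}{\gamma^{2}})$, and by \cref{lowertrunp01} for $q\in(0,1)$; for negative moments one uses that $\eta_{n}\geq \eta$ fails pointwise, so instead I will apply the same Kahane convexity or direct scaling argument used in the appendix to get the bounds $\Expe{\eta_{n}[0,t]^{-p}}\lessapprox t^{-p}$ on the relevant range. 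For (iii), the sup/inf estimates carry over because the proofs rely on chaining/Kolmogorov--Čentsov for the smoothed field $U_{n}$, which satisfies the same variance bound as $U^{\delta}_{\e}$ with $\e=\e_{n}$.

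Once these three ingredients are in place, the decomposition proceeds identically: when $a_{k}\leq \delta_{n}$ I will invoke the scaling law $\eta_{n}(x)\eqdis \delta_{n}\eta_{n/\delta_{n}}(x/\delta_{n})$ together with Markov to get
\begin{equation*}
\Proba{\eta_{n}(a_{k})\leq a}\leq \frac{(a/\delta_{n})^{q_{1}}}{(a_{k}/\delta_{n})^{-\zeta(-q_{1})}},
\end{equation*}
and when $a_{k}\geq \delta_{n}$ I will use a larger Markov exponent $q_{2}$ to get the second term in the geometric sum. Summing over $k$ with the choice $a_{k}=\rho\delta_{n} k^{b}$ and imposing the same summability constraints $-b\zeta(-q_{1})>1$ and $bq_{2}>1$ reproduces the constant $C_{sup}(a,\delta_{n},\rho)$. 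For the decreasing case, the same choice $b_{k}=\rho\delta_{n}k^{c}$ together with a Hölder split (exactly as in the original proof) produces the pair of constants $C_{inf,1},C_{inf,2}$.

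The main (minor) obstacle will be checking that the Girsanov/tilting step used implicitly in \cref{deltaSMP}-type decoupling continues to hold at the lower truncated level, since for $r\geq \delta_{n}$ the lower truncated fields $U_{n}(s)$ on disjoint $\delta_{n}$-separated intervals are no longer independent (only the white noise generators on disjoint upper half-plane regions are). To handle this I will instead use the full $\CF([0,Q_{n}(a)])$ filtration and condition as in \cref{eq:towerproperty}, observing that the event $\{\eta_{n}(T,T+t)\geq u\}$ for $T\geq Q_{n}(a)+\delta_{n}$ becomes measurable in the complementary sigma-algebra by the construction of $U_{n}$ from white noise on the region $\{y\geq \e_{n}\}$; no additional correlation issues arise that were not already present in the upper truncated case. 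Modulo this bookkeeping the proof is identical, and in particular the bounds of \cref{cor:shiftedGMCmoments} apply verbatim with $F_{1}(x)=x^{p}$, $p\in [1,\beta^{-1})$, and $F_{2}(x)=x^{-p}$, $p>0$.
\end{proofs}
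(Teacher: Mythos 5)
Your route is genuinely different from the paper's, and considerably heavier. The paper's entire proof is a two-line domination argument: since $\eta_{n}$ is a martingale in the truncation parameter (\cite[Lemma 6]{bacry2003log}) and every functional that actually appears in the proof of \cref{prop:shiftedGMCmoments} is convex — $x\mapsto x^{p}$ with $p\geq 1$, $x\mapsto x^{-p}$, and suprema of such over $T\in I_{k}$ — conditional Jensen gives $\Expe{\sup_{T\in I_{k}}\para{\eta_{n}(T,T+t)}^{p}}\leq \Expe{\sup_{T\in I_{k}}\para{\eta(T,T+t)}^{p}}$ and $\Expe{\para{\eta_{n}^{\delta}(a_{k})}^{-q}}\leq \Expe{\para{\eta^{\delta}(a_{k})}^{-q}}$, so every term in the decomposition of \cref{prop:shiftedGMCmoments} is bounded by its un-truncated counterpart and the bounds transfer verbatim (the only care needed is that any term requiring positive moments must stay in the convex range $[1,\frac{2}{\gamma^{2}})$, which the paper notes). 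You actually have this tool in hand — you invoke exactly this martingale comparison for $\Expe{\eta_{n}[0,t]^{q}}$ — but you stop short of noticing that it disposes of the whole proposition, including the sup/inf modulus terms and the negative-moment Markov terms, and instead propose to re-derive every ingredient at the truncated level.

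If you do insist on re-running the proof of \cref{prop:shiftedGMCmoments} for $\eta_{n}$, several of your transfer claims are mis-justified as written. The sup/inf modulus estimates \cref{prop:maxmoduluseta} and \cref{prop:minmodeta} are proved by Kahane comparison to the exact-scaling field plus the max-of-iid bound \cref{lem:maxiid}, not by chaining/Kolmogorov--\v{C}entsov, so your stated reason they ``carry over'' does not apply (they do carry over, but by Jensen domination or by redoing Kahane with the truncated covariance). The scaling relation you write, $\eta_{n}(x)\eqdis\delta_{n}\eta_{n/\delta_{n}}(x/\delta_{n})$, is not correct: rescaling space also rescales the lower truncation, $\eta^{\delta}_{\e}(\lambda\cdot)\eqdis\lambda\,\eta^{\delta/\lambda}_{\e/\lambda}(\cdot)$, so you cannot keep the same truncation on both sides; again the clean fix is to first dominate by the un-truncated measure and then scale. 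Finally, your worry that the $\delta$-SMP fails for $U_{n}$ is unfounded: the lower truncation only removes the bottom of the cone and does not enlarge its horizontal extent, so fields at distance $\geq\delta$ are still generated by disjoint regions and \cref{deltaSMP} holds unchanged; the conditioning workaround you sketch is unnecessary.
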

\begin{proof}
As in the proof \Cref{prop:shiftedGMCmoments} we reduce $\para{\eta_{n}\spara{Q_{n}(a),Q_{n}(a)+t}}^{p}$ to the sup of $\para{\eta_{n}(T,T+t)}^{p}$. So because those functions are convex and $X_{n}:=\eta_{n}$ is a martingale, we have by Jensens (as mentioned in \cite[Lemma 6.]{bacry2003log}) that
\begin{eqalign}
\Expe{\sup_{T\in I_{k}}\para{\eta_{n}\spara{T,T+t}}^{p}}\leq \Expe{\sup_{T\in I_{k}}\para{\eta(T,T+t)}^{p}    }.
\end{eqalign}
Similarly, for the summation terms such as $\Proba{\eta_{n}^{\delta}(a_{k})\leq a}$ as in \Cref{eq:summterm}), we take Markov and apply Jensen's as above
\begin{equation*}
 \Expe{\para{\eta_{n}^{\delta}(a_{k})}^{-q}}\leq \Expe{\para{\eta^{\delta}(a_{k})}^{-q}}.
\end{equation*}
Terms like $\Proba{a\leq \eta^{\delta}(b_{k+1}) }$ are not part of the final estimates, but if we did, we just have to apply Markov with $p\in [1,\frac{2}{\gamma^{2}})$ in order to have convexity.
\end{proof}
 Therefore, we get the following proposition.
\begin{proposition}\label{inversemomentslowtrunc}
The lower truncated $Q_{n}(a,b)$ satisfies the same bounds as in \Cref{INVERSE_MOMENTS} but in the case of negative moments we further constrain $\wt{p}_{i}>1$.
\end{proposition}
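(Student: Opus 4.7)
The plan is to mimic the proof of \cref{INVERSE_MOMENTS} essentially verbatim, with $\eta^{\delta}$ and $Q^{\delta}$ replaced by $\eta_n$ and $Q_n$ throughout. The starting duality $\{Q_n(x)\geq t\}=\{x\geq \eta_n(t)\}$ still holds by the definition of $Q_n$ as the right-continuous inverse of the lower-truncated measure $\eta_n$, so the layer-cake decomposition
\begin{equation*}
\Expe{(Q_n(0,x))^{\pm p}}=\int_{0}^{1}\Proba{(Q_n(0,x))^{\pm p}\geq t}\dt+\int_{1}^{\infty}\Proba{(Q_n(0,x))^{\pm p}\geq t}\dt
\end{equation*}
and the subsequent Markov step reduce the problem to estimating $\Expe{\eta_n(0,t)^{\pm q}}$ on $t\in[0,1]$ and on $t\in[1,\infty)$ for suitable auxiliary exponents $q,\widetilde q$.

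The second ingredient is the preceding \cref{shiftedmomentslowtrunc}, whose unshifted specialization gives exactly the inputs we need. Its proof is just the classical observation that $n\mapsto \eta_n(I)$ is a positive martingale (by \cite[Lemma~6]{bacry2003log}), so Jensen's inequality yields the dominance
\begin{equation*}
\Expe{\eta_n(I)^{q}}\leq \Expe{\eta(I)^{q}}
\end{equation*}
whenever the map $x\mapsto x^q$ is convex, namely for $q\in(-\infty,0)\cup[1,\tfrac{2}{\gamma^2})$. Plugging this into the layer-cake integrals recycles all the computations in the proof of \cref{INVERSE_MOMENTS} with identical integrability constraints on the exponents.

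For the \emph{positive} moments of $Q_n$ the original argument uses negative moments of $\eta$, and since $x\mapsto x^{-q}$ is convex on $(0,\infty)$ for every $q>0$, Jensen is available for all exponents we would want. Hence the bound $\Expe{(Q_n(0,x))^p}\lesssim x^{q}+x^{\widetilde q}$ with $q+\tfrac{\gamma^2}{2}(q^2+q)<p<\widetilde q$ transfers with no change. For the \emph{negative} moments of $Q_n$, however, the original argument uses \emph{positive} moments of $\eta$ with the auxiliary exponents allowed to lie in either $(0,1)$ or $[1,\tfrac{2}{\gamma^2})$. In the first range $x\mapsto x^q$ is \emph{concave}, so Jensen pushes the inequality the wrong way and the martingale argument fails; we are therefore forced to restrict $\widetilde p_i$ to the convex range $[1,\tfrac{2}{\gamma^2})$, which is exactly the extra constraint $\widetilde p_i>1$ recorded in the statement. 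With this restriction, the case $q,\widetilde q\in[1,\tfrac{2}{\gamma^2})$ of \cref{INVERSE_MOMENTS} (with constraint $\zeta(q)>p>\widetilde q$) carries over verbatim.

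There is no real obstacle beyond being careful to notice that the convex-regime Jensen step is the only available route and that the concave-regime $(0,1)$ sub-case of the original proof is genuinely inaccessible for $\eta_n$. Since nothing new needs to be established --- the scaling law and the identification of integrability ranges are inherited from the already-cited inputs --- the proof is essentially a bookkeeping exercise that tracks which auxiliary exponents remain admissible after the convexity restriction is imposed.
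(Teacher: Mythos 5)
Your core mechanism is the same as the paper's: the martingale property of $n\mapsto\eta_n$ from \cite[Lemma 6]{bacry2003log} plus Jensen gives $\Expe{\eta_n(\cdot)^{q}}\le\Expe{\eta(\cdot)^{q}}$ exactly in the convex range $q\in(-\infty,0)\cup[1,\tfrac{2}{\gamma^2})$, and this is precisely why the paper imposes $\wt{p}_i>1$ on the auxiliary exponents in the negative-moment case (the concave range $(0,1)$ is treated separately afterwards by a different Jensen step). So the convexity bookkeeping and the origin of the extra constraint are correctly identified.

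The gap is in the reduction step. The proposition is about the \emph{increments} $Q_n(a,b)$ (this is why the exponents are named $\wt{p}_i$, matching \cref{prop:momentsofshiftedinverse}, not the $q,\wt{q}$ of \cref{INVERSE_MOMENTS}), and for $a>0$ the duality does not produce $\eta_n(0,t)$: one has $\{Q_n(a,a+x)\ge t\}=\{x\ge \eta_n(Q_n(a),Q_n(a)+t)\}$, a GMC evaluated at the random shift $Q_n(a)$. Your claim that layer-cake plus Markov "reduce the problem to estimating $\Expe{\eta_n(0,t)^{\pm q}}$" is only true for $a=0$; for $a>0$ the lack of a genuine strong Markov property (only the $\delta$-SMP, which handles $t$ bounded away from $0$) means the small-$t$ regime requires the moments of the \emph{shifted} lower-truncated GMC. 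That is what the paper's proof actually uses: it runs the argument of \cref{prop:momentsofshiftedinverse} and invokes \cref{shiftedmomentslowtrunc} in its shifted form, whose proof needs the decomposition over the range of $Q_n(a)$, suprema/infima over the shift $T$, FKG, and Jensen applied to suprema of convex functions of the martingale $\eta_n$ — not merely the plain comparison $\Expe{\eta_n(0,t)^{q}}\le\Expe{\eta(0,t)^{q}}$. Since you explicitly invoke only the "unshifted specialization" of \cref{shiftedmomentslowtrunc}, your argument proves the statement only for $Q_n(0,x)$ and leaves the general increment $Q_n(a,b)$, which is the actual content of the proposition, unaddressed.
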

\begin{proof}
As in \Cref{INVERSE_MOMENTS}, when studying the positive moments of $Q_{n}(a,b)$, we get negative moments of $\eta_{n}$, which are always convex functions and so the same bounds from \Cref{shiftedmomentslowtrunc}. For the negative moments of $Q_{n}(a,b)$ we get the positive moments of $\eta_{n}$
\begin{eqalign}\label{negativemomentsinv}
&\Expe{\para{Q^{\delta}_{x}\bullet Q^{\delta}_{a}}^{-p}}\lessapprox x^{-\wt{p}_{i}}\sum_{i=1,2,3} \int_{I_i}\Expe{\para{ \etamu{Q^{\delta}_{a},Q^{\delta}_{a}+t}{\delta_{0}}}^{\wt{p_{i}} } }pt^{-p-1}\dt
\end{eqalign}
for some intervals $I_i$. In order, to again use convexity, we simply need to restrict $\wt{p}_{i}>1$ in order to use the convexity needed in Jensens in \Cref{shiftedmomentslowtrunc}.
\end{proof}
In the case of $\wt{p}_{i}\in (0,1)$, we have the following.
\begin{proposition}\label{inversemomentslowtruncp01}
We have the following bound for $p\in ( 0,1)$
\begin{eqalign}
\Expe{(Q^{\delta}_{n}(a,a+x))^{-p}}\leq& \wt{c}_{1}x^{-\wt{p}_{1,1}}+\wt{c}_{2}x^{-\wt{p}_{2}}.
\end{eqalign}
with the only change of now requiring $p>\zeta(\wt{p}_{2})$ instead of $p>\wt{p}_{2}$ due to \Cref{truncatedlinear}.
\end{proposition}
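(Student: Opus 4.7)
The plan is to follow the negative-moments proof in Proposition~\ref{prop:momentsofshiftedinverse}, adapted to the lower-truncated setting exactly as in Proposition~\ref{inversemomentslowtrunc}, and then isolate the single step where the $\wt{p}\geq 1$ argument breaks down for $\wt{p}\in(0,1)$. Concretely, layer-cake plus Markov splits the negative moment as
\begin{equation*}
\Expe{(Q_n^{\delta}(a,a+x))^{-p}} \leq \int_0^{B_\delta}\Proba{\eta_n^{\delta}(Q_n^{\delta}(a),Q_n^{\delta}(a)+t)\geq x}\,p t^{-p-1}\dt + x^{-\wt{p}_2}\int_{B_\delta}^{\infty}\Expe{\para{\eta_n^{\delta}(Q_n^{\delta}(a),Q_n^{\delta}(a)+t)}^{\wt{p}_2}}\,p t^{-p-1}\dt,
\end{equation*}
with $B_\delta\in[1,2]$ as before. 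The small-$t$ piece yields the $x^{-\wt{p}_{1,1}}$ contribution under the same constraints as in Proposition~\ref{prop:momentsofshiftedinverse}, because the required bound on the shifted GMC involves a \emph{negative} moment of $\eta_n^{\delta}$, which is convex and hence transferred from $\eta^{\delta}$ to $\eta_n^{\delta}$ by the martingale-Jensen argument of Proposition~\ref{shiftedmomentslowtrunc}.

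The tail is then handled by the $\delta$-SMP \eqref{deltaSMP}, which applies equally well to $\eta_n^{\delta}$ since the local independence property of $U^{\delta}$ is preserved under lower truncation, giving
\begin{equation*}
\Expe{\para{\eta_n^{\delta}(Q_n^{\delta}(a)+\delta,Q_n^{\delta}(a)+t)}^{\wt{p}_2}} = \Expe{(\eta_n^{\delta}(0,t-\delta))^{\wt{p}_2}}.
\end{equation*}
This is the one spot where the proof of Proposition~\ref{inversemomentslowtrunc} does not copy over: for $\wt{p}_2\in(0,1)$ the map $y\mapsto y^{\wt{p}_2}$ is concave, so the submartingale-Jensen trick of Proposition~\ref{shiftedmomentslowtrunc} runs the wrong way and cannot bound $\Expe{(\eta_n^{\delta})^{\wt{p}_2}}$ by $\Expe{(\eta^{\delta})^{\wt{p}_2}}$. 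Instead I would invoke \cref{truncatedlinear}, which supplies the bound directly in the form
\begin{equation*}
\Expe{(\eta_n^{\delta}(0,s))^{\wt{p}_2}} \leq c\, s^{\zeta(\wt{p}_2)},\qquad \wt{p}_2\in[0,1].
\end{equation*}
Substituting into the tail integral produces $\int_{B_\delta}^{\infty}(t-\delta)^{\zeta(\wt{p}_2)}t^{-p-1}\dt$, which is finite iff $p>\zeta(\wt{p}_2)$. Because $\zeta(q)=q+\tfrac{\gamma^2}{2}q(1-q)>q$ on $(0,1)$, this is strictly sharper than the $p>\wt{p}_2$ of Proposition~\ref{inversemomentslowtrunc} and matches the announced modification.

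The main obstacle is essentially this single substitution. Everything else in the proof transfers verbatim: the small-$t$ analysis uses only functions that are convex in the shifted measure (so passing from $\eta^{\delta}$ to $\eta_n^{\delta}$ is legal), and the decoupling through \eqref{deltaSMP} is insensitive to lower truncation. I therefore expect no further degradation of exponents beyond the replacement $\wt{p}_2\mapsto\zeta(\wt{p}_2)$, and no new cases to analyze.
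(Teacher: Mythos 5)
Your handling of the tail integral coincides with the paper's: the $\delta$-SMP survives lower truncation, the concavity of $y\mapsto y^{\wt{p}_2}$ blocks the martingale--Jensen transfer there, and \cref{truncatedlinear} supplies $\Expe{\para{\eta_n(0,t-\delta)}^{\wt{p}_2}}\lessapprox (t-\delta)^{\zeta(\wt{p}_2)}$, which after integrating against $t^{-p-1}$ yields exactly the announced replacement $p>\zeta(\wt{p}_2)$. That half is correct and is the same route the paper takes.

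The gap is in your small-$t$ piece. You assert that it ``involves a negative moment of $\eta_n^{\delta}$, which is convex and hence transferred'' by \cref{shiftedmomentslowtrunc}. It does not: to bound $\Proba{\eta_n^{\delta}\para{Q_n^{\delta}(a),Q_n^{\delta}(a)+t}\geq x}$ you must apply Markov with a \emph{positive} exponent $\wt{p}_{1,1}$, and in the regime this proposition addresses $\wt{p}_{1,1}\in(0,1)$, so after the decomposition of \cref{prop:shiftedGMCmoments} the quantity to control is $\Expe{\para{\sup_{T\in I_k}\eta_n(T,T+t)}^{\wt{p}_{1,1}}}$ --- a concave functional of the measure, for which the convexity argument of \cref{shiftedmomentslowtrunc} runs the wrong way, exactly as you yourself observe for the tail. (Negative moments of $\eta_n$ only enter through auxiliary factors such as $\Proba{\eta_n^{\delta}(a_k)\leq a}$, which are indeed convex and unproblematic.) The paper closes this with the one step your proposal is missing: Jensen at the concave power reduces the sup-moment to power one,
$\Expe{\para{\sup_{T}\eta_n(T,T+t)}^{\wt{p}_{1,1}}}\leq\para{\Expe{\sup_{T}\eta_n(T,T+t)}}^{\wt{p}_{1,1}}\leq\para{\Expe{\sup_{T}\eta(T,T+t)}}^{\wt{p}_{1,1}}$,
the middle inequality using that the supremum is convex and $\eta_n$ is a martingale in $n$, and then invokes \cref{prop:maxmoduluseta} at exponent one; this is what produces the constraint $\wt{p}_{1}-\frac{r_{1}-1}{r_{1}}\wt{p}_{1}>0$ quoted in \cref{prop:momentsofshiftedinverse}. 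Without it, your claim that ``everything else transfers verbatim'' is unfounded: the case $\wt{p}_i\in(0,1)$ is precisely the one where the verbatim transfer fails on both pieces, not only on the tail.
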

\begin{proof}
Here ,we simply apply Jensen's to return to the previous case for $p=1$:
\begin{eqalign}
\Expe{\sup_{T\in I_{k}}\para{\eta_{n}\spara{T,T+t}}^{p}}
&\leq \para{\Expe{\sup_{T\in I_{k}}\para{\eta_{n}(T,T+t)}    }}^{p}\\
&\leq \para{\Expe{\sup_{T\in I_{k}}\para{\eta(T,T+t)}    }}^{p}.
\end{eqalign}
where we can then bound from \Cref{prop:maxmoduluseta}. And in the places where we use the strong Markov property we need to use the lower truncated bounds in \Cref{momentseta}:  for the positive moments in $p\in (0,1)$ of lower truncated GMC $\eta_{n}(0,t):=\int_{0}^{t}e^{U_{n}(s)}\ds$, we have the bound
\begin{eqalign}\label{truncatedlinear}
\Expe{\para{\eta_{n}(0,t)}^{p}}\lessapprox~t^{\zeta(p)}, \forall t\geq 0.
\end{eqalign}
\end{proof}
\newpage
\part{Cauchy sequence result}\label{part:rateofconvergence}\label{rateconv}
In this section we study the decay rate of $\Proba{\abs{Q_{n+m}(x)-Q_{n}(x)}\geq r}$ and of the $L^{\ell}$-difference \\
$\Expe{\abs{Q_{n+m}(x)-Q_{n}(x)}^{\ell}}$ for $\ell\in (1,2)$, $x\in [0,1] ,r>0$ as $n\to +\infty$. Using the Cauchy-criterion  we will obtain that $Q$ is the $L^{\ell}$-Cauchy limit of $Q_{n}$. This is analogous to studying the rate of $L^{2}$-convergence for GMC measures $\eta_{n}\stackrel{L^{2}}{\to} \eta$ which is known to be  $\Expe{(\eta_{\e}(1)-\eta_{\e/2}(1))^{2}}\leq c\e^{2-\gamma^{2}}$ \citep{berestycki2021gaussian,rhodes2015lectures}. Extending it to $\ell=2$, will likely require the use of good/bad thick points as in the GMC case.
\begin{theorem}[Cauchy sequence]\label{rateconvCauchy}\label{LellCauchy}Fix $\gamma<\frac{1}{12\sqrt{2}}$, $\delta>0$ and let any strictly decreasing sequence $\delta_{n}=2^{-\delta n}$.
 Fix an interval $[\e,M]$ for some $\e,M>0$ and $x\in [\e,M]$. Then the sequence  $\set{Q_{n}(x)}_{n\geq 1}$ is $L^{\ell}$-Cauchy for any $\ell\in (1,2)$
\begin{eqalign}
Q_{n}(x)\stackrel{L^{\ell}}{\to}Q(x).
\end{eqalign}
\end{theorem}
\begin{remark}
This result is not being used anywhere else. It is interesting that even a simple looking statement as this seems to be very tricky to obtain.\\
The reason for $\gamma<\frac{1}{12\sqrt{2}}$ originates from the use of moments in $p\in (0,1)$ in \cref{prop:maxmoduluseta}.
\end{remark}

\subsection{Proof of \sectm{\Cref{LellCauchy}}}
By standard Cauchy series argument, it is enough to establish a decaying bound
\begin{eqalign}
 \Expe{\abs{Q_{n+m}(x)-Q_{n}(x)}^{\ell}}\lessapprox  c 2^{a_{2}m}2^{-a_{1} n}
\end{eqalign}
for some exponents $a_{1},a_{2}>0$. 
To show this, we will prove that
\begin{eqalign}\label{eq:rateCauchysummable}
 \Expe{\abs{Q_{n+m}(x)-Q_{n}(x)}^{\ell}}\leq c_{x}\para{ G_{n}^{-2}\para{\frac{\delta_{n}}{\delta_{n+m}} }^{\gamma^{2}}\delta_{n}^{p_{1}\alpha}+G_{n}^{p_{2}}},
\end{eqalign}
where the constant \textit{depends} on $x$ in both increasing/decreasing way i.e. $c_{x}\approx x^{-p_{1}}+x^{p_{2}}$, the exponents $p_{1},p_{2}\in (0,1)$ are some small enough numbers and for any sequence $G_{n}\to 0$. In particular to ensure that the overall bound limits to zero as $n\to \infty$, we require
\begin{eqalign}
\lim_{n\to +\infty}\para{G_{n}^{-2}\para{\frac {\delta_{n}}{\delta_{n+m}}}^{\gamma^{2}}\delta_{n}^{p_{1}\alpha}}=0.
\end{eqalign}


We state here the two main  \cref{lemm:precomposedetainv},\cref{backwardshiftedGMC} that we will use and that we will prove below. The first lemma simply obtains a rate on $\eta_{n+m}(Q_{n}(x))\to x. $
\begin{customlemma}{27}
Fix strictly decreasing sequence $\delta_{n}\to 0$ satisfying $\frac{\delta_{n+m}}{\delta_{n}}=\delta_{1+m}$ for every $n,m\geq 1$. There exist some $p\in (0,1)$ such that for $\eta_{n+m}(Q_{n}(x))  $ we have
\begin{eqalign}\label{Bl2}
\Expe{\para{\eta_{n+m}(Q_{n}(x))-x}^{2}}&\leq C_{sup}(x,1,1/2 )\para{\frac{\delta_{n}}{\delta_{n+m}} }^{2\beta}\delta_{n}^{p\alpha}x^{2-p}\\
\end{eqalign}
where $C_{sup}(x,1,1/2 )$ are from \Cref{cor:shiftedGMCmoments}.
\end{customlemma}
The second lemma is more technical. It involves studying the negative moment of a \textit{backward shifted} GMC $\eta(Q-r,Q)$.
\begin{customlemma}{28}
For $x,r>0$ we have a split bound: $\twhen r\geq 1/2$
\begin{eqalign}\label{eq:backshifgmc}
&\Proba{\eta_{n+m}\spara{\maxp{0,Q_{n}(x)-r},Q_{n}(x)}\leq
\wt{g}_{n,r}, Q_{n}(x)\geq r }    \\
\leq &B_{n,r}^{\frac{\lambda_{1}p_{2}-1}{\lambda_{1}p_{2}}}\minp{\para{x/r}^{\frac{1-\lambda_{1}p_{2}}{\lambda_{1}}},B_{n,r}^{1/\lambda_{1}p_{2}-1} (x/r)^{p_{2}}}+B_{n,r}^{\frac{\lambda_{1}p_{2}-1}{\lambda_{1}p_{2}}}\maxp{\para{x/r}^{1/\lambda_{1}},B_{n,r}^{1/\lambda_{1}p_{2}}},\\
&\tand \twhen r<1/2\\
&\Proba{\eta_{n+m}\spara{\maxp{0,Q_{n}(x)-r},Q_{n}(x)}\leq
\wt{g}_{n,r}, Q_{n}(x)\geq r }    \\
\leq &\wt{ B}_{n,r}^{\frac{\lambda_{2}\wt{p}_{2}-1}{\lambda_2\wt{p}_{2}}}\minp{\para{x/(1-r)}^{\frac{1-\lambda_{2}\wt{p}_{2}}{\lambda_{2}}},\wt{ B}_{n,r}^{1/\lambda_{2}\wt{p}_{2}-1} (x/(1-r))^{\wt{p}_{2}}}\\
&+\wt{ B}_{n,r}^{\frac{\lambda_{2}\wt{p}_{2}-1}{\lambda_2\wt{p}_{2}}}\maxp{\para{x/(1-r)}^{1/\lambda_{2}},\wt{ B}_{n,r}^{1/\lambda_{2}\wt{p}_{2}}} +\wt{g}_{n,r}^{p_{3}}  r^{\zeta(-p_{3})-1},
\end{eqalign}
for $B_{n,r}:=g_{n,r}^{p_{1}}  r^{-p_{1}}$ and $\wt{B}_{n,r}:=g_{n,r}^{\wt{p}_{1}}r^{\zeta(-\wt{p}_{1})-1}$ and the exponents are positive and satisfy the constraints $\lambda_{i}\in (0,1),\lambda_{1}p_{2}>1$ and $\lambda_{2}\wt{p}_{2}>1$.
\end{customlemma}

\subsection{Proof of main estimate \sectm{\cref{eq:rateCauchysummable}}}
\begin{proof}
We start with using layercake and splitting into two different large deviations
\begin{eqalign}
\Expe{\abs{Q_{n+m}(x)-Q_{n}(x)}^{\ell}}=&\int_{0}^{\infty}\Proba{\abs{Q_{n+m}(x)-Q_{n}(x)}\geq r}\ell r^{\ell-1}\dr\\
\leq &\int_{0}^{\infty}\Proba{Q_{n+m}(x)-Q_{n}(x)\geq r}\ell r^{\ell-1}\dr\\
&+\int_{0}^{\infty}\Proba{Q_{n+m}(x)-Q_{n}(x)\leq -r}\ell r^{\ell-1}\dr.
\end{eqalign}
\proofparagraph{Step 1: Cauchy difference $Q_{n+m}(x)-Q_{n}(x)\geq r $ for $r\geq 2$}
We start with rewriting the event $\set{Q_{n+m}(x)-Q_{n}(x)\geq r }$ in terms of $\eta_{n+m}$
\begin{eqalign}\label{eq:shiftedforwardevent}
\set{Q_{n+m}(x)\geq r+Q_{n}(x)}=&\set{x\geq \int_{0}^{Q_{n}(x)+r}e^{U^{1}_{n+m}(s)}\ds }\\
=&\set{x-\eta_{n+m}(Q_{n}(x))\geq \int_{Q_{n}(x)}^{Q_{n}(x)+r}e^{U^{1}_{n+m}(s)}\ds }.
\end{eqalign}
Here we can use \Cref{deltaSMP} by removing the $\delta=1$-common part and then apply Markov inequality
\begin{eqalign}
&\Proba{x-\eta_{n+m}(Q_{n}(x))\geq \int_{Q_{n}(x)}^{Q_{n}(x)+r}e^{U^{1}_{n+m}(s)}\ds } \\
\leq &\Proba{x-\eta_{n+m}(Q_{n}(x))\geq \int_{Q_{n}(x)+1}^{Q_{n}(x)+r}e^{U^{1}_{n+m}(s)}\ds } \\
\leq &\Expe{\abs{x-\eta_{n+m}(Q_{n}(x))}^{2}\para{\int_{Q_{n}(x)+1}^{Q_{n}(x)+r}e^{U^{1}_{n+m}(s)}\ds}^{-2} }\\
=&\Expe{\abs{x-\eta_{n+m}(Q_{n}(x))}^{2}}\Expe{\para{\eta_{n+m}(r-1)}^{-2} }.
\end{eqalign}
 For the first factor we apply \Cref{lemm:precomposedetainv} 
\begin{eqalign}\label{eq:firstterml2}
\Expe{\para{x-\eta_{n+m}(Q_{n}(x))}^{2}}\leq  C_{sup}(x,1,1/2 )\para{\frac{\delta_{n}}{\delta_{n+m}} }^{2\beta}\delta_{n}^{p\alpha}x^{2-p}.
\end{eqalign}
For the second factor we use the negative moments \Cref{momentseta} to bound $\Expe{\para{\eta_{n+m}(r-1)}^{-2} }\leq c(r-1)^{-2}$ because $r\geq 2$. So since $\ell\in (1,2)$, we get finiteness for the integral over $[2,\infty)$.
\proofparagraph{Step 2: Cauchy difference $Q_{n+m}(x)-Q_{n}(x)\geq r $ for $r\in [0,2]$}
In this case we bound by two terms:
\begin{eqalign}
&\Proba{x-\eta_{n+m}(Q_{n}(x))\geq \int_{Q_{n}(x)}^{Q_{n}(x)+r}e^{U^{1}_{n+m}(s)}\ds } \\
&\leq\Proba{x-\eta_{n+m}(Q_{n}(x))\geq g_{n,r}}+\Proba{g_{n,r}\geq \int_{Q_{n}(x)}^{Q_{n}(x)+r}e^{U^{1}_{n+m}(s)}\ds}
\end{eqalign}
for some $g_{n,r}$ to be chosen below. For the first probability term we again apply Markov inequality for $p=2$ and the  \Cref{lemm:precomposedetainv}
\begin{eqalign}\label{eq:firstterml23}
\frac{1}{g_{n,r}^{2}}\Expe{\para{x-\eta_{n+m}(Q_{n}(x))}^{2}}\leq \frac{1}{g_{n,r}^{2}} C_{sup}(x,1,1/2 )\para{\frac{\delta_{n}}{\delta_{n+m}} }^{2\beta}\delta_{n}^{p\alpha}x^{2-p}.
\end{eqalign}
For the second probability term, we use  \Cref{cor:shiftedGMCmoments} for $F_{2}:=x^{-p_{1}}$ and any $p_{1}>0$ (but for the lower truncated as mentioned in \Cref{shiftedmomentslowtrunc}):
\begin{eqalign}\label{eq:secondtermshifted}
\Proba{g_{n,r}\geq \eta_{n+m}^{1}\spara{Q_{n}(x),Q_{n}(x)+r}}&\lessapprox g_{n,r}^{p_{1}}r^{\alpha(p_{1})}C_{inf}(x,\rho),
\end{eqalign}
where $\alpha_{\e}(p):=\frac{\zeta(-p(1+\e))-1}{1+\e}$ for $t\leq 2\delta$ and $\alpha_{\e}(p):=-p$ for $t\geq 2\delta$.
\pparagraph{Integrating over $r\in [0,2]$} The integral over $[1,2]$ has no singularity, so we don't focus on it. For the first term in \Cref{eq:firstterml2} we require
\begin{eqalign}
\int_{0}^{1}\frac{1}{g_{n,r}^{2}}r^{\ell-1}\dr   <\infty.
\end{eqalign}
For the term in \Cref{eq:secondtermshifted}
we require
\begin{eqalign}
\int_{0}^{1-\rho} g_{n,r}^{p_{1}}r^{\zeta(-p_{1})-1}r^{\ell-1}\dr  <\infty,
\end{eqalign}
where we ignored the $\epsilon>0$ for simplicity. By letting $g_{n,r}:=G_{n}r^{\alpha_{1}}$ for some $\alpha_{1}>0,G_{n}>0$, we require in summary:
\begin{eqalign}
&\alpha_{1}<\frac{\ell}{2},\qquad \tand   -p_{1}\alpha_{1}-\zeta(-p_{1})+1<\ell.
\end{eqalign}
We fix $1<\wt{q}_{11}<2$. Since $-\zeta(-p_{1})=p_{1}+\frac{\gamma^{2}}{2}(p_{1}^{2}+p_{1})$ is a increasing function of $p_{1}>0$, we can take small enough $p_{1}$ so that the above LHS sum of exponents is less than $\ell\in (1,2)$.
\proofparagraph{Step 3: Cauchy difference $Q_{n+m}(x)-Q_{n}(x)\leq -r $}
Ideally, as in \Cref{eq:shiftedforwardevent}, we would like to study
\begin{eqalign}
\set{Q_{n+m}(x)-Q_{n}(x)\leq -r}=\set{x-\eta_{n}(Q_{n+m}(x))\geq \int_{Q_{n+m}(x)}^{Q_{n+m}(x)+r}e^{U^{1}_{n}(s)}\ds}
\end{eqalign}
in order to use \Cref{deltaSMP}. However, as explained below in trying to study the deviation
\begin{eqalign}
\eta_{n}(Q_{n+m}(x))-x=\int_{0}^{x} e^{-U^{n}_{n+m}(Q_{n+m}(s))}\ds -x,
\end{eqalign}
we now come across the issue of entangled $-U^{n}_{n+m}(Q_{n+m}(s))$ i.e. we cannot longer condition on the scales of $Q_{n+m}$ as in the proof of \Cref{lemm:precomposedetainv}.  So instead we work with a backward shifted GMC
\begin{eqalign}\label{backwarcshiftCauc}
\set{0\leq Q_{n+m}(x)\leq Q_{n}(x)-r}=\set{\eta_{n+m}(Q_{n}(x))-x\geq \int_{Q_{n}(x)-r}^{Q_{n}(x)}e^{U^{1}_{n+m}(s)}\ds, Q_{n}(x)\geq r}.
\end{eqalign}
We repeat the split
\begin{eqalign}\label{eq:twoprobabilityterms}
\Proba{x-\eta_{n+m}(Q_{n}(x))\geq \wt{g}_{n,r}}+\Proba{\wt{g}_{n,r}\geq \int_{\maxp{0,Q_{n}(x)-r}}^{Q_{n}(x)}e^{U^{1}_{n+m}(s)}\ds, Q_{n}(x)\geq r}.
\end{eqalign}
For the first probability term in \cref{eq:twoprobabilityterms} we again use the same \Cref{lemm:precomposedetainv}
\begin{eqalign}
\Proba{x-\eta_{n+m}(Q_{n}(x))\geq \wt{g}_{n,r}}\leq \frac{1}{\wt{g}_{n,r}^{2}}  C_{sup}(x,1,1/2 )\para{\frac{\delta_{n}}{\delta_{n+m}} }^{2\beta}\delta_{n}^{p\alpha}x^{2-p}.
\end{eqalign}
For the second probability term in \cref{eq:twoprobabilityterms} we use the \Cref{backwardshiftedGMC}. For $x,r>0$ we have a split bound:
\begin{eqalign}\label{eq:Bnrbound}
&\Proba{\eta_{n+m}\spara{\maxp{0,Q_{n}(x)-r},Q_{n}(x)}\leq
\wt{g}_{n,r}, Q_{n}(x)\geq r }\\\\
\lessapprox& {\branchmat{ B_{n,r}^{\frac{\lambda_{1}p_{2}-1}{\lambda_{1}p_{2}}}\minp{\para{x/r}^{\frac{1-\lambda_{1}p_{2}}{\lambda_{1}}},B_{n,r}^{1/\lambda_{1}p_{2}-1} (x/r)^{p_{2}}}& \tcwhen r\geq 1/2\\
+B_{n,r}^{\frac{\lambda_{1}p_{2}-1}{\lambda_{1}p_{2}}}\maxp{\para{x/r}^{1/\lambda_{1}},B_{n,r}^{1/\lambda_{1}p_{2}}} & \\
\wt{ B}_{n,r}^{\frac{\lambda_{2}\wt{p}_{2}-1}{\lambda_2\wt{p}_{2}}}\minp{\para{x/(1-r)}^{\frac{1-\lambda_{2}\wt{p}_{2}}{\lambda_{2}}},\wt{ B}_{n,r}^{1/\lambda_{2}\wt{p}_{2}-1} (x/(1-r))^{\wt{p}_{2}}} & \tcwhen r<1/2\\
+\wt{ B}_{n,r}^{\frac{\lambda_{2}\wt{p}_{2}-1}{\lambda_2\wt{p}_{2}}}\maxp{\para{x/(1-r)}^{1/\lambda_{2}},\wt{ B}_{n,r}^{1/\lambda_{2}\wt{p}_{2}}}  &\\
+\wt{g}_{n,r}^{p_{3}}  r^{\zeta(-p_{3})}&}},
\end{eqalign}
for $B_{n,r}:=\wt{g}_{n,r}^{p_{1}}  r^{-p_{1}}$ and $\wt{B}_{n,r}:=\wt{g}_{n,r}^{\wt{p}_{1}}r^{\zeta(-\wt{p}_{1})-1}$ and the exponents are positive and satisfy the constraints $\lambda_{i}\in (0,1),\lambda_{1}p_{2}>1$ and $\lambda_{2}\wt{p}_{2}>1$.
\pparagraph{Integral over $r\geq 1$}
From the first term we obtain the constraint
\begin{eqalign}
 \int_{1}^{\infty}\frac{1}{\wt{g}_{n,r}^{2}}r^{\ell-1}\dr<\infty.
\end{eqalign}
From the second term we use \cref{eq:Bnrbound}, it contains min and max term. For the minimum term we obtain the constraint
\begin{eqalign}
 \int_{1}^{\infty}\minp{\para{\wt{g}_{n,r}^{p_{1}}  r^{-p_{1}}}^{1-1/\lambda_{1}p_{2}} \para{x/r}^{\frac{1-\lambda_{1}p_{2}}{\lambda_{1}}},(x/r)^{p_{2}}}r^{\ell-1}\dr<\infty.
\end{eqalign}
For the maximum term, since $\frac{x}{r}\leq M$ and $B_{n,r}\to 0$ as $r\to +\infty$(from the choice below), we can ignore the maximum coefficient and just write $ B_{n,r}^{\frac{\lambda_{1}p_{2}-1}{\lambda_{1}p_{2}}}$ to bound by
\begin{eqalign}
 \int_{1}^{\infty}\para{\wt{g}_{n,r}^{p_{1}}  r^{-p_{1}}}^{\frac{\lambda_{1}p_{2}-1}{\lambda_{1}p_{2}}}r^{\ell-1}\dr<\infty.
\end{eqalign}
By letting $\wt{g}_{n,r}=G_{n}r^{c_{2}}$ for some $c_{2}>0,G_{n}>0$, we require in summary:
\begin{eqalign}
&c_{2}>\frac{\ell}{2} \tand  p_{2}-\ell>0\\
&\para{(1-c_{2})p_{1}+1}(\frac{\lambda_{1}p_{2}-1}{\lambda_{1}p_{2}}) -\ell>0\\
&\para{(1-c_{2})p_{1}}(\frac{\lambda_{1}p_{2}-1}{\lambda_{1}p_{2}}) -\ell>0\\
\end{eqalign}
So here we take $1-c_{2}=\e>0$ for small enough $\e>0$ (which is possible since $\ell<2$) and then take $p_{1},p_{2}$ large enough so that the LHS of the second constraint is larger than $\ell$.
\pparagraph{Integral over $r\in [\frac{1}{2},1]$}
Here we don't have any constraints for the integral over $r$.
\pparagraph{Integral over $r\in [0,\frac{1}{2}]$}
From the first term we get the constraint
\begin{eqalign}
\int_{0}^{1/2}\frac{1}{\wt{g}_{n,r}^{2}}r^{\ell-1}\dr   <\infty.
\end{eqalign}
From the second term we get the constraint
\begin{eqalign}
&\int_{0}^{1/2} \minp{\para{\wt{g}_{n,r}^{\wt{p}_{1}}r^{\zeta(-\wt{p}_{1})-1}}^{1-1/\lambda_{2}\wt{p}_{2}} \para{x/(1-r)}^{\frac{1-\lambda_{2}\wt{p}_{2}}{\lambda_{2}}},(x/(1-r))^{\wt{p}_{2}}}r^{\ell-1}\\
&+\wt{g}_{n,r}^{\wt{p}_{1}}r^{\zeta(-\wt{p}_{1})-1}r^{\ell-1}+\wt{g}_{n,r}^{p_{3}}  r^{\zeta(-p_{3})-1}r^{\ell-1}\dr   <\infty.
\end{eqalign}
By letting $\wt{g}_{n,r}=G_{n}r^{\alpha_{4}}$ for some $\alpha_{4}>0,G_{n}\in (0,1)$, we require in summary:
\begin{eqalign}
&1)\alpha_{4}<\frac{\ell}{2},\qquad 2) (\alpha_{4}\wt{p}_{1}+\zeta(-\wt{p}_{1})-1)(\frac{\lambda_{2}\wt{p}_{2}-1}{\lambda_{2}\wt{p}_{2}}) +\ell>0 \\
&3)\wt{p}_{1}\alpha_{4}+\zeta(-\wt{p}_1)-1+\ell>0\tand p_{3}\alpha_{4}+\zeta(-p_{3})-1+\ell>0.
\end{eqalign}
First maximize $\alpha_{4}=\frac{\ell}{2}-\e_{4}$ for arbitrarily small $\e_{4}$. For constraint (2), we take $\wt{p}_{2}$ arbitrarily large and $\wt{p}_{1}$ small enough so that the first term is smaller than $\ell>1$. For (3) we take $\wt{p}_{1},p_{3}$ small enough so that they are smaller than $\ell$.
\pparagraph{Uniformity in $x\in [\e,M]$}
In the bounds involving
\begin{eqalign}
&\minp{\para{\wt{g}_{n,r}^{p_{1}}  r^{-p_{1}}}^{1-1/\lambda_{1}p_{2}} \para{x/r}^{\frac{1-\lambda_{1}p_{2}}{\lambda_{1}}},(x/r)^{p_{2}}}\\
&\minp{\para{\wt{g}_{n,r}^{\wt{p}_{1}}r^{\zeta(-\wt{p}_{1})-1}}^{1-1/\lambda_{2}\wt{p}_{2}} \para{x/(1-r)}^{\frac{1-\lambda_{2}\wt{p}_{2}}{\lambda_{2}}},(x/(1-r))^{\wt{p}_{2}}}
\end{eqalign}
we see that if restrict $x\in [\e,M]$ for fixed $\e,M>0$, then we get uniform limits $G_{n}^{p_{1}(1-\frac{1}{\lambda_{1}p_{1}})},G_{n}^{\tilde{p}_{1}(1-\frac{1}{\lambda_{2}\tilde{p}_{2}})}$ respectively as $n\to +\infty$.

\end{proof}

\subsection{Precomposed term \sectm{$\eta_{n+m}(Q_{n}(x))$}}
We state here one of the lemmas used. Here we study the deviation of
\begin{eqalign}\label{precompo1}
\eta_{n+m}(Q_{n}(x))-x\to 0 \tas n\to +\infty,
\end{eqalign}
which is reasonable because as $n\to +\infty$, the upper truncated $\eta^{n}([0,x])$ converges to $x$. We have a nice formula using inverse function theorem. Since $Q_{n}(x)$ is the inverse of the differentiable $\eta_{n}(x)$, we have
\begin{eqalign}
 \frac{dQ_{n}(x)}{dx}=  e^{-U^{1}_{n}(Q_{n}(x))}.
\end{eqalign}
Here recall $U_{n}(x):=U_{\delta_{n}}^{1}(x)$. Therefore, by change of variables we have
\begin{eqalign}
\eta_{n+m}(Q_{n}(x))&=\int_{0}^{Q_{n}(x)} e^{U^{1}_{n+m}(s)}\ds\\
&=\int_{0}^{x} e^{U^{1}_{n+m}(Q_{n}(s))}e^{-U^{1}_{n}(Q_{n}(s))}\ds\\
&=\int_{0}^{x} e^{U^{n}_{n+m}(Q_{n}(s))}\ds.
\end{eqalign}
In the case of $n=\infty$, we can also use the change of variables formula \cite[(4.9) Proposition.]{revuz2013continuous}: Consider any increasing, possibly infinite, right-continuous function $A:[0,\infty)\to [0,\infty]$ with inverse $C_{s}$, then if $f$ is a nonnegative Borel function on $[0,\infty)$ we have
\begin{eqalign}
\int_{[0,\infty)}f(u)dA_{u}=    \int_{[0,\infty)}f(C_{s})\ind{C_{s}<\infty}\ds.
\end{eqalign}
Here though we cannot use the decoupling technique because the presence of $Q_{n}$ creates long range correlation. However,  by conditioning on $U^{1}_{n}$ we have that $\Exp_{U^{1}_{n}}\spara{\eta_{n+m}(Q_{n}(x))}=x$.
\begin{lemma}\label{lemm:precomposedetainv}
Fix strictly decreasing sequence $\delta_{n}\to 0$ satisfying $\frac{\delta_{n+m}}{\delta_{n}}=\delta_{1+m}$ for every $n,m\geq 1$. There exist some $p\in (0,1)$ such that for $\eta_{n+m}(Q_{n}(x))  $ we have
\begin{eqalign}
\Expe{\para{\eta_{n+m}(Q_{n}(x))-x}^{2}}&\leq C_{sup}(x,1,1/2 )\para{\frac{\delta_{n}}{\delta_{n+m}} }^{2\beta}\delta_{n}^{p\alpha}x^{2-p}\\
\end{eqalign}
where $C_{sup}(x,1,1/2 )$ are from \Cref{cor:shiftedGMCmoments}.
\end{lemma}

\begin{proof}
Due to the context of  \Cref{cor:shiftedGMCmoments} for $p\in (0,1)$, for simplicity we assume $x\leq \frac{1}{e}$ since it corresponds to the more singular behaviour. By first conditioning on $U^{1}_{n}$ because it is independent of $U^{n}_{n+m}$ we have:
\begin{eqalign}
\Expe{\para{\eta_{n+m}(Q_{n}(x))-x}^{2}}&=\Expe{\Exp_{U^{1}_{n}}\spara{\para{\int_{0}^{x} e^{U^{n}_{n+m}(Q_{n}(s))}\ds -x}^{2}}}\\
&=\Expe{\Exp_{U^{1}_{n}}\spara{\para{\eta_{n+m}(Q_{n}(x))}^{2}}}-x^{2}\\
&=\iint_{[0,x]^{2}}\Expe{\Exp_{U^{1}_{n}}\spara{e^{U^{n}_{n+m}(Q_{n}(s))+U^{n}_{n+m}(Q_{n}(t))}}   }\ds\dt -x^{2}\\
&\leq \Expe{\iint\limits_{[0,x]^{2}\cap \Delta_{n} }\para{\frac{\delta_{n}}{\abs{Q_{n}(s)-Q_{n}(t)}\vee  \delta_{n+m}} }^{\gamma^{2}}\ds\dt}- \Exp\abs{\Delta_{n}},
\end{eqalign}
where $\Delta_{n}:=\set{(s,t)\in [0,x]^{2}: \abs{Q_{n}(s)-Q_{n}(t)}\leq \delta_{n} }$ and we used that $\Exp_{U^{1}_{n}}\spara{\eta_{n+m}(Q_{n}(x))}=x$ and we kept only the logarithmic part of the covariance.
First we estimate the expected size of the diagonal set
\begin{eqalign}
\Exp\abs{\Delta_{n}}=  2\iint\limits_{[0,x]^{2}, s<t } \Proba{Q_{n}(s,t)\leq  \delta_{n}}\ds\dt=2\iint\limits_{[0,x]^{2}, s<t } \Proba{t-s\leq \eta_{n}(Q_{n}(s),Q_{n}(s)+\delta_{n})}\ds\dt
\end{eqalign}
and then bound by Markov for $p\in (0,1)$:
\begin{eqalign}
\iint\limits_{[0,x]^{2}, s<t }\frac{1}{(t-s)^{p}}\Expe{\para{\eta_{n}(Q_{n}(s),Q_{n}(s)+\delta_{n})}^{p}}\ds\dt.
\end{eqalign}
Here we apply \Cref{cor:shiftedGMCmoments} for $p\in (0,1)$
\begin{eqalign}
\Expe{\para{\eta_{n}(Q_{n}(s),Q_{n}(s)+\delta_{n})}^{p}} \leq&
\Expe{\supl{0\leq T\leq 1}\para{\eta_{n}(T,T+\delta_{n})}^{p}  } C_{sup}(x,1,1/2 )\\
\leq &c\delta_{n}^{p\alpha} C_{sup}(x,1,1/2 )=:c_{x}\delta_{n}^{p\alpha}.
\end{eqalign}
So then we are just left with the integral that evaluates to $\iint\limits_{[0,x]^{2}, s<t }\frac{1}{(t-s)^{p}}\ds\dt=\frac{1}{(1-p)(2-p)}x^{2-p}$ and all together
\begin{eqalign}\label{eq:estimatefirst}
\Exp\abs{\Delta_{n}}\leq   c_{x}\delta_{n}^{p\alpha}\frac{1}{(1-p)(2-p)}x^{2-p}.
\end{eqalign}
\pparagraph{Double integral term:} Next we study the double integral term. Here we again apply the estimate in \Cref{cor:shiftedGMCmoments} for $p\in (0,1)$
\begin{eqalign}\label{eq:estimatesecon}
&\Expe{\iint\limits_{[0,x]^{2}\cap \Delta_{n} }\Exp_{U^{1}_{n}}\spara{\para{\frac{\delta_{n}}{\maxp{\abs{Q_{n}(s)-Q_{n}(t)},  \delta_{n+m}} } }^{\gamma^{2}}}\ds\dt}\\
&\leq 2\para{\frac{\delta_{n}}{\delta_{n+m}} }^{\gamma^{2}} \iint\limits_{[0,x]^{2}, s<t } \Proba{Q_{n}(s,t)\leq  \delta_{n}}\ds\dt \\
&\leq c_{x}\para{\frac{\delta_{n}}{\delta_{n+m}} }^{\gamma^{2}}\delta_{n}^{p\alpha}\frac{1}{(1-p)(2-p)}x^{2-p}\\
&= c_{x}\para{2^{\delta m} }^{\gamma^{2}}2^{-\delta p\alpha}\frac{1}{(1-p)(2-p)}x^{2-p}.
\end{eqalign}
Since the estimate in \Cref{eq:estimatefirst} is strictly smaller than \Cref{eq:estimatesecon}, we keep the latter.
\end{proof}
\subsection{Backward-shifted GMC}
We state here another of the lemmas used. For the \Cref{rateconvCauchy}, we also need to estimate in \Cref{backwarcshiftCauc} a backward-shifted GMC.
\begin{lemma}\label{backwardshiftedGMC}
For $x,r>0$ we have a split bound: $\twhen r\geq 1/2$
\begin{eqalign}
&\Proba{\eta_{n+m}\spara{\maxp{0,Q_{n}(x)-r},Q_{n}(x)}\leq
\wt{g}_{n,r}, Q_{n}(x)\geq r }    \\
\leq &B_{n,r}^{\frac{\lambda_{1}p_{2}-1}{\lambda_{1}p_{2}}}\minp{\para{x/r}^{\frac{1-\lambda_{1}p_{2}}{\lambda_{1}}},B_{n,r}^{1/\lambda_{1}p_{2}-1} (x/r)^{p_{2}}}+B_{n,r}^{\frac{\lambda_{1}p_{2}-1}{\lambda_{1}p_{2}}}\maxp{\para{x/r}^{1/\lambda_{1}},B_{n,r}^{1/\lambda_{1}p_{2}}},\\
&\tand \twhen r<1/2\\
&\Proba{\eta_{n+m}\spara{\maxp{0,Q_{n}(x)-r},Q_{n}(x)}\leq
\wt{g}_{n,r}, Q_{n}(x)\geq r }    \\
\leq &\wt{ B}_{n,r}^{\frac{\lambda_{2}\wt{p}_{2}-1}{\lambda_2\wt{p}_{2}}}\minp{\para{x/(1-r)}^{\frac{1-\lambda_{2}\wt{p}_{2}}{\lambda_{2}}},\wt{ B}_{n,r}^{1/\lambda_{2}\wt{p}_{2}-1} (x/(1-r))^{\wt{p}_{2}}}\\
&+\wt{ B}_{n,r}^{\frac{\lambda_{2}\wt{p}_{2}-1}{\lambda_2\wt{p}_{2}}}\maxp{\para{x/(1-r)}^{1/\lambda_{2}},\wt{ B}_{n,r}^{1/\lambda_{2}\wt{p}_{2}}} +\wt{g}_{n,r}^{p_{3}}  r^{\zeta(-p_{3})-1},\\
\end{eqalign}
for $B_{n,r}:=g_{n,r}^{p_{1}}  r^{-p_{1}}$ and $\wt{B}_{n,r}:=g_{n,r}^{\wt{p}_{1}}r^{\zeta(-\wt{p}_{1})-1}$ and the exponents are positive and satisfy the constraints $\lambda_{i}\in (0,1),\lambda_{1}p_{2}>1$ and $\lambda_{2}\wt{p}_{2}>1$.

\end{lemma}
\begin{proof}
\pparagraph{Case \sectm{$r\geq 1/2$}}
As in \Cref{prop:shiftedGMCmoments}. we decompose the variable $Q_{n}(x)$ using the events $\ind{b_{k}\leq Q_{n}(x)<b_{k+1}}$
\begin{eqalign}
&\Proba{\eta_{n+m}\para{Q_{n}(x)-r,Q_{n}(x)}\leq g_{n,r},Q_{n}(x)\geq r } \\
&\leq \sum_{k\geq 0}
\Proba{\infl{T\in [b_{k},b_{k+1}] }\eta_{n+m}\para{T-r,T}\leq g_{n,r},b_{k}\leq Q_{n}(x)<b_{k+1}},
\end{eqalign}
where we set $b_{k}:=rk^{\lambda_{1}}$ for some $\lambda_{1}\in (0,1)$ and $k\geq 1$. By MVT $b_{k+1}-b_{k}\leq \frac{r}{k^{1-\lambda_{1}}}\leq r$. Using  translation invariance and \Cref{prop:minmodeta} for $r\geq 2\delta:=2$ we bound
\begin{eqalign}
\Proba{\infl{T\in [b_{k},b_{k+1}] }\eta_{n+m}\spara{T-r,T}\leq g_{n,r}}=&\Proba{\infl{T\in [0,b_{k+1}-b_{k}] }\eta_{n+m}\spara{T,T+r}\leq g_{n,r}}\\
\leq &\Proba{\infl{T\in [0,r] }\eta_{n+m}\spara{T,T+r}\leq g_{n,r}}\\
\leq & cg_{n,r}^{p_{1}}  r^{-p_{1}} =:B_{n,r}.
\end{eqalign}
If $r\in [1/2,2]$, we skip the first few terms that might satisfy $rk^{\lambda_{1}}\leq 2\doncl k\leq (r/2)^{-1/\lambda_{1}}\leq 4^{1/\lambda_{1}}\leq 5$ and just bound by one; we ignore this in the overall bound since $B_{n,r}<B_{n,r}^{\frac{\lambda_{1}p_{2}-1}{\lambda_{1}p_{2}}}$. For $k\geq 3$ we have
\begin{eqalign}
\Proba{b_{k}\leq Q_{n}(x)<b_{k+1}}\leq\Proba{\eta_{n}(b_{k})\leq x}\leq c x^{p_{2}}  \para{rk^{\lambda_{1}} }^{-p_{2}}=:R_{x,r}k^{-\lambda_{1}p_{2}}=:R_{k},
\end{eqalign}
where $R_{x,r}:=c (x/r)^{p_{2}}$. So because of their correlation we will bound by the minimum
\begin{eqalign}
\Proba{\infl{T\in [b_{k},b_{k+1}] }\eta_{n+m}\para{T-r,T}\leq g_{n,r},b_{k}\leq Q_{n}(x)<b_{k+1}}\leq \minp{B_{n,r}, R_{x,r}k^{-\lambda_{1}p_{2}}}.
\end{eqalign}
For $R_{k}\leq B_{n,r}\doncl k\geq N_{n,x}:= \maxp{\para{R_{x,r}  B_{n,r}^{-1}}^{1/(\lambda_{1}p_{2})},1}$ we have the sum
\begin{eqalign}
R_{x,r}\sum_{k\geq N_{n,x}}k^{-\lambda_{1}p_{2}}\leq R_{x,r} N_{n,x}^{1-\lambda_{1}p_{2}},
\end{eqalign}
where $\lambda_{1}p_{2}>1$. For  $R_{k}\geq B_{n,r}\doncl k\leq N_{n,x}$,  we are left with $B_{n,r}N_{n,x}$. All together
\begin{eqalign}
&\sum_{k\geq 1}
\Proba{\infl{T\in [b_{k},b_{k+1}] }\eta_{n+m}\para{T-r,T}\leq g_{n,r},b_{k}\leq Q_{n}(x)<b_{k+1}}\\
&\leq  R_{x,r} N_{n,x}^{1-\lambda_{1}p_{2}}+B_{n,r}N_{n,x}\\
&\leq \minp{B_{n,r}^{1-1/\lambda_{1}p_{2}} \para{x/r}^{\frac{1-\lambda_{1}p_{2}}{\lambda_{1}}},(x/r)^{p_{2}}}+B_{n,r}\maxp{\para{R_{x,r}  B_{n,r}^{-1}}^{1/(\lambda_{1}p_{2})},1}\\
&\leq B_{n,r}^{\frac{\lambda_{1}p_{2}-1}{\lambda_{1}p_{2}}}\para{\minp{\para{x/r}^{\frac{1-\lambda_{1}p_{2}}{\lambda_{1}}},B_{n,r}^{1/\lambda_{1}p_{2}-1} (x/r)^{p_{2}}}+\maxp{\para{x/r}^{1/\lambda_{1}},B_{n,r}^{1/\lambda_{1}p_{2}}}  }.
\end{eqalign}
Here we see that for fixed $x\in [\e,M]$, as $n\to +\infty$, it goes to zero overall. Whereas if we fix $n$ and then take $x\to 0$, due to the maximum it does not to zero but it is uniformly bounded.
\pparagraph{Case $r\in [0,1/2)$}
Here we need to decompose the range $Q_{n}(x)\in [r,1-r]\cup [1-r,\infty)$ \begin{eqalign}
&\Proba{\infl{T\in [r,1-r] }\eta_{n+m}\spara{T-r,T}\leq g_{n,r},r\leq Q_{n}(x)<1-r}\\
+&\sum_{k\geq 1}
\Proba{\infl{T\in [b_{k},b_{k+1}] }\eta_{n+m}\spara{T-r,T}\leq g_{n,r}, b_{k}\leq Q_{n}(x)<b_{k+1}}.
\end{eqalign}
 For the first term we use the infimum estimate \Cref{prop:minmodeta}
\begin{eqalign}
&\Proba{\infl{T\in [r,1-r] }\eta_{n+m}\spara{T-r,T}\leq g_{n,r}}    \leq g_{n,r}^{p_{3}}  r^{\zeta(-p_{3})-1}.
\end{eqalign}
Next we study the second term above. We set $b_{k}:=(1-r)k^{\lambda_{2}}\geq r$ for some $\lambda_{2}\in (0,1)$ and $k\geq 1$ and so $b_{k+1}-b_{k}\leq \frac{(1-r)}{k^{1-\lambda_{2}}}\leq 1-r$. For the first event we again use translation invariance by $r$ and apply \Cref{prop:minmodeta} for any $\wt{p}_{1}> 0$
\begin{eqalign}\label{eq:infimumboundbacks}
g_{n,r}^{\wt{p}_{1}}\Expe{\para{\infl{T\in[0,1-r] }\etamu{T,T+r}{1}}^{-\wt{p}_{1}}}\lessapprox~g_{n,r}^{\wt{p}_{1}}r^{\zeta(-\wt{p}_{1})-1}=:\wt{B}_{n,r}.
\end{eqalign}
Similarly
\begin{eqalign}
\Proba{b_{k}\leq Q_{n}(x)<b_{k+1}}\leq\Proba{\eta_{n}(b_{k})\leq x}\leq c x^{\wt{p}_{2}}  \para{(1-r)k^{\lambda_{2}} }^{-\wt{p}_{2}}=:\wt{R}_{x,r}k^{-\lambda_{2}\wt{p}_{2}}=:\wt{R}_{k},
\end{eqalign}
where again $\lambda_{2}\wt{p}_{2}>1$. We again bound by the minimum $\minp{\wt{B}_{n,r},\wt{R}_{k}}$ and so we are left with
\begin{eqalign}
&\sum_{k\geq 1}
\Proba{\infl{T\in [b_{k},b_{k+1}] }\eta_{n+m}\para{T-r,T}\leq g_{n,r},b_{k}\leq Q_{n}(x)<b_{k+1}}\\
\leq &   \wt{R}_{x,r}\wt{N}_{n,x}^{1-\lambda_{2}\wt{p}_{2}}+\wt{B}_{n,r}\wt{N}_{n,x}\\
&\leq \wt{ B}_{n,r}^{\frac{\lambda_{2}\wt{p}_{2}-1}{\lambda_2\wt{p}_{2}}}\minp{\para{x/(1-r)}^{\frac{1-\lambda_{2}\wt{p}_{2}}{\lambda_{2}}},\wt{ B}_{n,r}^{1/\lambda_{2}\wt{p}_{2}-1} (x/(1-r))^{\wt{p}_{2}}}\\
&+\wt{ B}_{n,r}^{\frac{\lambda_{2}\wt{p}_{2}-1}{\lambda_2\wt{p}_{2}}}\maxp{\para{x/(1-r)}^{1/\lambda_{2}},\wt{ B}_{n,r}^{1/\lambda_{2}\wt{p}_{2}}}  .
\end{eqalign}
where $\wt{N}_{n,x}:= \maxp{\para{c (x/(1-r))^{\wt{p}_{2}}  \wt{B}_{n,r}^{-1}}^{1/(\lambda_{2}\wt{p}_{2})}, 1}$.
\end{proof}

\part{Further directions and Appendix}
\section{Further directions }\label{furtherresearchdirections}
\begin{enumerate}
    \item \textbf{Constraint on $\gamma$.} One seemingly major research problem is extending the constraints $\gamma<1$ and $\gamma\in (0,\sqrt{\sqrt{11}-3})\approx (0,0.77817...)$. It is unclear how to remove these given the current techniques. One source of those constraints are in the maximum modulus estimates where we get singular factors. So it seems that one would have  to avoid the modulus techniques all together.
    \item \textbf{Dependence on shift:} For the tail and moments of the shifted GMC $\eta(Q_{a},Q_{a}+L)$ there is still a question of whether there will be some bound that is uniform in $a>0$ and so possibly even be able to take limit $a\to +\infty$. Some counter evidence to that is \Cref{differencetermunshifted}
\begin{eqalign}
\Expe{\eta^{\delta}(Q^{\delta}(a),Q^{\delta}(a)+r)}-r&=\Expe{Q^{\delta}(a)}-a\\
&=\int_{0}^{\infty}\Proba{ Q_{R(t)}(a)\leq t \leq  Q(a)}\dt\\
&=\int_{0}^{\infty}\Proba{ \eta(t)\leq a \leq  \eta_{R(t)}(t) }\dt>0,
\end{eqalign}
where we have $a$-dependence on the right but it is unclear whether it grows as $a$ grows. So here we would need that term to be uniformly bounded in $a>0$. One possible route is by studying the small-ball expansions of
\begin{eqalign}
\Proba{\eta(t) \leq a}\tand \Proba{\eta_{R(t)}(t) \leq a}
\end{eqalign}
as $t\to +\infty$, analogously to the tail expansion results \citep{rhodesvargas2019tail,wong2020universal}.
An alternative route in the spirit of \Cref{ergodiclemma}, is to study the tail of the above integral
\begin{eqalign}
\int_{M}^{\infty}\Proba{\eta(t) \leq a\leq  \eta_{R(t)}(t)}\dt
\end{eqalign}
and to use $\eta^{1}(t)\eqdis t\eta^{1/t}(1)$. In fact, heuristically $a\to+\infty$ we have
\begin{eqalign}
\int_{M}^{\infty}\Proba{\eta^{1/t}(1) \leq \frac{a}{t}\leq  \eta_{R}^{1/t}(1)}\dt&\approx      \int_{M}^{\infty}\Proba{1 \leq \frac{a}{t}\leq  1+c}\dt\\
&=\int_{\frac{a}{1+c}}^{a}\dt=a(1-\frac{1}{1+c})\to +\infty ,
\end{eqalign}
where $c:=\int_{0}^{\delta}\frac{1}{s^{\gamma^{2}}}\ds-\delta$.

\item \textbf{Correlated Brownian motions:} This is an attempt to study the distribution of the randomly-shifted GMC
\begin{eqalign}
\Proba{\eta(Q(a),Q(a)+L)\geq t}.    
\end{eqalign}
We start with approximating $Q_{a}$. We will work with a discrete approximation $T_{n}\downarrow Q(a)$:
\begin{eqalign}
T_{n}:=\frac{m+1}{2^{n}}\twhen \frac{m}{2^{n}}\leq Q(a) < \frac{m+1}{2^{n}}
\end{eqalign}
and so for fixed $n$ the range of $T_{n}$ is $D_{n}(0,\infty)$, all the dyadics of nth-scale in the open $(0,\infty)$. So we  approximate $\Proba{\eta(Q(a),Q(a)+L)\geq t}$ by:
\begin{eqalign}\label{jointevent}
\Proba{\eta(T_n,T_n+L)\geq t}&=\sum_{\ell\in D_{n}(0,\infty)}\Proba{\eta(\ell,\ell+L)\geq t, T_{n}=\ell}\\
&=\sum_{\ell\in D_{n}(0,\infty)}\Proba{\eta(\ell,\ell+L)\geq t, \eta(0,\ell-2^{-n})\leq a\leq \eta(0,\ell)}.
\end{eqalign}
By expanding these GMC integrals
\begin{eqalign}
\eta(\ell,\ell+L)=\int_{[0,L]}e^{U(\ell+s)}\ds \tand  \eta(0,\ell)=\int_{0}^{\ell}e^{U(\ell-s)}\ds ,
\end{eqalign}
we see that they approach the field $U(\ell)$ from the right and left respectively. So naturally we consider the following two corresponding time-changed Brownian motions
\begin{eqalign}
B^{1}_{t}:=U(\ell+e^{-t})\cap     U(\ell)\tand B^{2}_{t}:=U(\ell-e^{-t})\cap U(\ell)
\end{eqalign}
and write
\begin{eqalign}
&\eta(\ell,\ell+L)=\int_{[0,L]}e^{B_{\ln(1/ s)}^{1}-\frac{\gamma^{2}}{2}\ln(1/ s)+U(\ell+s)\setminus U(\ell)}\ds\\
\tand & \eta(0,\ell)=\int_{0}^{\ell}e^{B_{\ln(1/ s)}^{2}-\frac{\gamma^{2}}{2}\ln(1/ s)+U(\ell-s)\setminus U(\ell)}\ds.
\end{eqalign}
From here the problem becomes about studying those two correlated Brownian motions
\begin{eqalign}
\tfor i=1,2,\Expe{B^{i}_{t}B^{i}_{w}}&=\branchmat{0&, \min(t,w)\leq \ln\frac{1}{\delta}\\
\min(t,w)+\ln \delta&, \min(t,w)\geq \ln\frac{1}{\delta}},  \\   \tand\Expe{B^{1}_{t}B^{2}_{w}}&=\branchmat{0&, e^{-t}+e^{-w}\geq \delta\\
\ln \dfrac{\delta}{e^{-t}+e^{-w}}&, e^{-t}+e^{-w}\leq \delta}.
\end{eqalign}
Returning to the joint event \cref{jointevent} we first upper bound by the supremum of $B^{1}$
\begin{eqalign}\label{eq:supremumapprodriftBM}
\Proba{\expo{\sup_{s\in [0,L]}B_{\ln(1/ s)}^{1}-\frac{\gamma^{2}}{2}\ln(1/ s)} \geq \frac{t}{c_{L}},
\eta_{2}(\ell-2^{-n})\leq a \leq \eta_{2}(\ell)},
\end{eqalign}
where $c_{L}:=\int_{[0,L]}e^{U(\ell+s)\setminus U(\ell)}\ds$ is independent of everything else and thus deterministic by conditioning it and $\eta_{2}(x):= \int_{0}^{x}e^{B_{\ln(1/ s)}^{2}-\frac{\gamma^{2}}{2}\ln(1/ s)+U(\ell-s)\setminus U(\ell)}\ds$. So here we are studying the joint law of the supremum $M_{1,\infty}:=sup_{s\in [0,L]}\para{B_{\ln(1/ s)}^{1}-\frac{\gamma^{2}}{2}\ln(1/ s)}$ and of the path trajectory  $\set{B_{\ln(1/ s)}^{2}-\frac{\gamma^{2}}{2}\ln(1/ s)}_{s>0}$. A similar joint event in the case of $B^{1}=B^{2}$ has been studied in the William's decomposition \citep{williams1974path}, \cite[ch.VI 3.13]{revuz2013continuous} (for a version for \Levy~  processes see \citep{duquesne2003path}). This could possibly go through understanding the joint law of the two suprema $(M_{1,\infty},M_{2,\infty})$ for $M_{i,\infty}:=sup_{s\in [0,L]}\para{B_{\ln(1/ s)}^{i}-\frac{\gamma^{2}}{2}\ln(1/ s)}$, which even in the constant correlation case is tricky (\citep{debicki2020exact}). Also since for a Brownian motion with negative drift $a=\beta$, the tail of the maximum satisfies
\begin{eqalign}
\Proba{\sup_{t\geq 0}B_{t}-\beta t\geq u}=e^{-\frac{u\beta^{2}}{2\gamma^{2}}}=e^{-\frac{u}{4}},    
\end{eqalign}
we see in \cref{eq:supremumapprodriftBM}, that if somehow we get around the $\ell$-event and we take moments
\begin{eqalign}
    t^{-p}\Expe{\expo{p\sup_{s\in [0,L]}B_{\ln(1/ s)}^{1}-\frac{\gamma^{2}}{2}\ln(1/ s)} }\Expe{c_{L}^{p}},
\end{eqalign}
then the p-moment of the maximum is finite only for $p\in (0,\frac{1}{4})$.

\item \textbf{Cauchy uniformity in $x\in [0,1]$} The result shown in \cref{rateconv} is done for each fixed $x$. It will be interesting to further do a diagonalization argument to get uniformity in the Cauchy-sequence convergence.

\item \textbf{Multivariable Kahane} This is an attempt to study shifted GMC moments for $q\geq 1$
        \begin{equation*}
        \Expe{(\eta(Q_a,Q_a+x))^{q}}.  
        \end{equation*}
Here we are in the convex case. The problem with applying scaling law is that $Q_{a}$ is dependent on the entire history $U(0,Q_{a})$ in a nonlinear way. If we study the supremum 
\begin{equation*}
        \Expe{(\sup_{0\leq T\leq L}\eta(T,T+x))^{q}}, 
        \end{equation*}
then it will be interesting if there is an extension of Kahane inequality. Following \cite{Biskup2017},for two Gaussian fields $X,Y$ and $q=1$, we attempted using the softmax $f(x_1,...,x_{n})=\frac{1}{R}\ln\para{\sum_{i=1}^{n}e^{Rx_{i}}}$ approximation and the regular Kahane-interpolation proof. But we ended up with
\begin{eqalign}\label{eq:Kahanediff}
&\Exp\sum_{k=1}^{n}p_{k}\int_{T_{k}}^{T_{k}+x}\int_{T_{k}}^{T_{k}+x}(C_{X}(t,s)-C_{Y}(t,s))\deta_{Z_{\lambda}}(s)\deta_{Z_{\lambda}}(t)\\    
-&\sum_{k,\ell=1}^{n}p_{k}p_{\ell}\int_{T_{\ell}}^{T_{\ell}+x}\int_{T_{k}}^{T_{k}+x}(C_{X}(t,s)-C_{Y}(t,s))\deta_{Z_{\lambda}}(s)\deta_{Z_{\lambda}}(t),  
\end{eqalign}
for $p_{k}:=\frac{e^{R\eta_{k}}}{\sum_{i=1}^{n}e^{R\eta_{i}}}$ and $\eta_{i}:=\eta(T_{i},T_{i}+x)$, some generic sequence $T_{i}\in [0,L]$ and $Z_{\lambda}:=\lambda X+\sqrt{1-\lambda^{2}}Y$. The nonnegative of this difference \cref{eq:Kahanediff} might follow if we impose more conditions on the covariances than just $C_{X}(t,s)-C_{Y}(t,s)\geq 0$. For example, if we have $C_{X}(t,s)-C_{Y}(t,s)=f(t)f(s)$, then \cref{eq:Kahanediff} will be nonnegative by the Cauchy-Schwartz inequality. But this condition is not possible in our particular case where we deal with differences $\ln\frac{1}{\abs{t-s}}$.

\item  \textbf{Kahane-Malliavin:} This is another attempt to study shifted GMC moments for $q\geq 1$
        \begin{equation*}
        \Expe{(\eta(Q_a,Q_a+x))^{q}}  
        \end{equation*}
        in the spirit of convexity inequalities like Kahane-Malliavin interpolation \citep{nourdin2014comparison} and integration by parts formula \citep{decreusefond2008hitting}. We attempted to take Malliavin derivatives of shifted GMC by first mollifying as done in \citep{decreusefond2008hitting} (this is the content of the work \citep{kojar2023inverse}). The fundamental issue is that hitting time is not Malliavin differentiable. Also, as observed in \citep{cui2017first} in the case of Brownian motion, the law of $U(Q_{a}+s)$ is far from being Gaussian. So it is unclear what type of calculus to use here.

    \item\textbf{Good-$\lambda$ inequality} This is yet another attempt to study the shifted GMC.     The good$-\lambda$ inequality method (\cite[chapter IV 4.8 def]{revuz2013continuous}) might also be relevant here.  By moderate function $F$ we mean the increasing, continuous function vanishing at $0$ and such that
\begin{eqalign}
\sup_{x> 0}  \frac{F(\alpha x)}{F(x)}=\gamma<\infty \tforsome \alpha>1.
\end{eqalign}
In particular $F(x)=x^{p}$ for $0<p<\infty$ are moderate functions.
\begin{proposition}
If the positive variables $(X,Y)$ satisfy for all $\lambda>0, \delta\in (0,a]$ ,for some fixed $a>0$,
\begin{eqalign}
\Proba{X\geq \beta\lambda, Y\leq \delta \lambda}\leq \phi(\delta)    \Proba{X\geq \lambda}
\end{eqalign}
for some fixed $\beta>1$ and function $\phi:(0,a]\to [0,\infty)$ decaying $\liz{\delta}\phi(\delta)=0$, then for moderate functions F we have the inequality
\begin{eqalign}
\Expe{F(X)}\leq c \Expe{F(Y)}
\end{eqalign}
for some constant $c$ depending only on $\phi,\beta\tand \gamma$.
\end{proposition}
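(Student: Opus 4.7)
The plan is to follow the classical Burkholder--Gundy good-$\lambda$ scheme, with the key engine being the layercake formula combined with the moderateness of $F$. First I would assume without loss of generality that $\Expe{F(X)}<\infty$ (handled via a truncation $X_N := X\wedge N$, since the bound obtained below will be uniform in $N$ and $F(X_N)\uparrow F(X)$ by monotone convergence). The starting identity is
\begin{equation}
\Expe{F(X)}=\int_{0}^{\infty}\Proba{X\geq \lambda}\,\mathrm{d}F(\lambda),
\qquad \Expe{F(X/\beta)}=\int_{0}^{\infty}\Proba{X\geq \beta\lambda}\,\mathrm{d}F(\lambda).
\end{equation}

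The second step is to split the integrand using the good-$\lambda$ hypothesis together with the trivial bound $\Proba{X\geq \beta\lambda}\leq \Proba{X\geq \beta\lambda,Y\leq \delta\lambda}+\Proba{Y>\delta\lambda}$, giving
\begin{equation}
\Proba{X\geq \beta\lambda}\leq \phi(\delta)\,\Proba{X\geq \lambda}+\Proba{Y>\delta\lambda}
\end{equation}
for every $\delta\in (0,a]$. Integrating against $\mathrm{d}F(\lambda)$ and applying the layercake identity in reverse yields
\begin{equation}
\Expe{F(X/\beta)}\leq \phi(\delta)\,\Expe{F(X)}+\Expe{F(Y/\delta)}.
\end{equation}

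The third step is to convert this between $X$ and $X/\beta$ and between $Y$ and $Y/\delta$ via the moderate function property. Iterating $F(\alpha x)\leq \gamma F(x)$ gives $F(\alpha^{k}x)\leq \gamma^{k}F(x)$, so for any $\mu\geq 1$ there is a constant $C(\mu):=\gamma^{\lceil \log_{\alpha}\mu\rceil}$ with $F(\mu x)\leq C(\mu)F(x)$. Applied to $\mu=\beta$ and $\mu=1/\delta$ this yields
\begin{equation}
\Expe{F(X)}\leq C(\beta)\,\Expe{F(X/\beta)},\qquad \Expe{F(Y/\delta)}\leq C(1/\delta)\,\Expe{F(Y)}.
\end{equation}
Combining with the previous display gives
\begin{equation}
\Expe{F(X)}\leq C(\beta)\phi(\delta)\,\Expe{F(X)}+C(\beta)C(1/\delta)\,\Expe{F(Y)}.
\end{equation}

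The final step is to choose $\delta$. Since $\phi(\delta)\to 0$ as $\delta\to 0^{+}$, pick $\delta_{0}\in (0,a]$ with $C(\beta)\phi(\delta_{0})\leq 1/2$; this is a function of $\beta,\gamma,\phi$ alone. Absorbing the $\Expe{F(X)}$ term into the left-hand side (legitimate thanks to the preliminary truncation) yields $\Expe{F(X_{N})}\leq 2C(\beta)C(1/\delta_{0})\Expe{F(Y)}$ and, upon letting $N\to\infty$, the claimed bound with $c=2C(\beta)C(1/\delta_{0})$. The main obstacle, and the one that must be handled with care, is the absorption step: if $\Expe{F(X)}=\infty$ one cannot subtract, so the whole argument must be carried out with $X_{N}$ in place of $X$, verifying that the good-$\lambda$ hypothesis passes to the truncated variable (which it does, since $\set{X_{N}\geq \beta\lambda}\subset \set{X\geq \beta\lambda}$), and then invoking monotone convergence at the end.
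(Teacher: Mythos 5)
Your proposal is correct, and it is exactly the classical good-$\lambda$ argument (layercake representation, the split $\Proba{X\geq\beta\lambda}\leq\Proba{X\geq\beta\lambda,\,Y\leq\delta\lambda}+\Proba{Y>\delta\lambda}$, the moderate-growth constants $C(\beta),C(1/\delta)$, and absorption after choosing $\delta_0$ with $C(\beta)\phi(\delta_0)\leq 1/2$) that underlies the Revuz--Yor reference; the paper itself states the proposition with only that citation and no proof. One small point to tighten: the inclusion $\set{X_N\geq\beta\lambda}\subset\set{X\geq\beta\lambda}$ alone only yields the bound with $\Proba{X\geq\lambda}$ on the right, which is useless for absorption when $\Expe{F(X)}=\infty$; the hypothesis genuinely transfers to $X_N$ because for $\beta\lambda\leq N$ one has $\set{X_N\geq\beta\lambda}=\set{X\geq\beta\lambda}$ and $\set{X_N\geq\lambda}=\set{X\geq\lambda}$ (both levels are $\leq N$), while for $\beta\lambda>N$ the left-hand side vanishes, so the good-$\lambda$ inequality holds for $(X_N,Y)$ with the same $\phi$ and $\beta$.
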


So one question is whether we can prove a good lambda inequality for $X=\eta(Q(a),Q(a)+L)$ and $Y=\int_{Q(a)}^{Q(a)+L}e^{U(Q(a)+s)\setminus U(Q(a)) }\ds$:
\begin{eqalign}
&\Proba{\eta(Q(a),Q(a)+L)\geq \beta\lambda,\int_{Q(a)}^{Q(a)+L}e^{U(Q(a)+s)\setminus U(Q(a)) }\ds\leq \delta \lambda}\\
&\leq \phi(\delta)    \Proba{\eta(Q(a),Q(a)+L)\geq \lambda}.
\end{eqalign}

    \item \textbf{ Density for the inverse $Q(x)$ }\\
    The density of the total mass of the GMC was  conjectured in the unit circle in \citep{fyodorov2008freezing} and unit interval in \citep{ostrovsky2009mellin,fyodorov2009statistical} and computed rigorously in \citep{remy2017fyodorov,chhaibi2019circle} and \citep{remy2020distribution}, respectively. This was done by proving certain relations between fusion/singular moments and in turn that they satisfy certain differential equations akin to the Belavin-Polyakov-Zamolodchikov (BPZ) differential equations. For obtaining the density of the inverse, one can use the relation
    \begin{eqalign}
    \Proba{Q(x)\geq t}=\Proba{x\geq \eta(t)}=\int_{0}^{x}\rho(t,y)\dy,   \end{eqalign}
    where $\rho(t,y)$ is the density for $\eta(t)$. So if we have that density, we can also obtain the one for the inverse. More directly, it could be that the inverse $Q$ of the singular integrals also satisfies some analogous recurrence relations and differential equations.

    \item \textbf{Inverse-Liouville Brownian motion}\\
    Let $(B_{t})_{t\geq 0}$ be a 1d Brownian motion in interval $D\subset \T$ with $B_{0}=z$ that is independent of GMC measure $\eta$, then the following process is called the 1d-Liouville Brownian motion (LBM)
\begin{eqalign}
Z_{\e,t}:=B_{\eta_{\e}^{-1}(t)},
\end{eqalign}
with
\begin{eqalign}
\eta_{\e}(t):=\int^{t\wedge T_{r,D}}\expo{\gamma U_{\e}(B_{s})-\frac{\gamma^{2}}{2}\ln(1/\e) } ds,
\end{eqalign}
and $T_{r,D}:=\inf\{s>0: dist(B_{s},\partial D)\leq r\}$. Using the techniques from \citep{jackson2018liouville,berestycki2015diffusion} one can similarly show that the process $Z_{\e,t}\codis Z$ is continuous and that LBM lives on the thick points. (In fact, local time is continuous in 1d and so the proofs simplify considerably.) So naturally one can study the (LBM) parametrized by the inverse map
\begin{eqalign}
Z_{\e,t}^{Q}
:=B_{Q_{\e}^{-1}(t)}=B_{\eta_{\e}(t)},
\end{eqalign}
where $Q_{\e}$ is the inverse of $\eta_{\e}$. The continuity $Z_{\e,t}^{Q}\codis Z^{Q}$ here follows from that of $\eta_{\e}(t)$. A similar result of LBM living on the thick points $T_{\frac{1}{\gamma}}$ might still follow.

\item \textbf{Inverse function theorem (IFT)}\\
It might be interesting to carefully study the inverse function theorem in the context of the singular function $\eta(x)$. Since the lower truncated measure $\eta_{\e}[0,x]$ has a continuous density, it is natural to apply the inverse function theorem and find the following recursion formula
\begin{eqalign}
Q_{\e}(I)=\int_{I}\e^{-\frac{\gamma^{2}}{2}}\expo{-U_{\e}(Q_{\e}(\theta))  }\dtheta.
\end{eqalign}
Because the $\e$ shows up in a singular way, this most likely doesn't converge. If it does a have limit, it will be at the location where $U_{\e}(Q_{\e}(\theta))\to +\infty$ in order to counter the blowing up regularization. And since $Q_{\e}(\theta)$ are likely contained in the support of the GMC measure and hence the thick-points $T_{\gamma}$ (i.e. $x\in [0,1]$ such that $\liminf\limits_{\e\to 0}\frac{U_{\e}(x)}{\ln(\frac{1}{\e})}=\gamma$), the continuous recursion formula seems reasonable. By differentiating the recursion formula from IFT we have the following autonomous ordinary differential equation:
\begin{eqalign}
    \frac{\dint Q_{\e}(t)}{\dint t} = \e^{-\frac{\gamma^{2}}{2}}\expo{-U_{\e}(Q_{\e}(t))  }  \twith Q_{\e}(0)=0.
\end{eqalign}
For very small $\e>0$, the equilibrium points satisfy $\frac{\dint Q_{\e}(t_{*})}{\dint t} = 0$ and so they require that $Q_{\e}(t_{*})$ should be approximating the thick points $T_{\gamma}$ as $\e\to 0$.

    \item \textbf{Martingales inside the GMC measure}\\
    Since we are studying the hitting time $Q$, we naturally searched for any martingale structure in GMC as a function of position.  As described in the proof of \cite[Lemma 5]{bacry2003log}, there are martingales hiding inside the field $U(s)$ for $s\in [0,t]$ for small enough $t$ i.e. we can decompose
    \begin{eqalign}
    U(s)=U_{l}(s)+U_{i}+U_{r}(s),
    \end{eqalign}
    where $U_{i}=\bigcap_{s\in [0,t]}U(s)\neq \varnothing$ (which is indeed non-empty for small enough $t$) and $U_{l}$ and $U_{r}$ are the remaining left and right fields. These are martingales in time i.e. $\Expe{U_{l}(s-\delta)|\mathcal{F}_{l}(s)}=U_{l}(s)$ and  $\Expe{U_{r}(s+\delta)|\mathcal{F}_{r}(s)}=U_{r}(s)$. Therefore, the GMC measures $\eta_{l}[x]=\int_{0}^{x} e^{\overline{U_{l}}(s)}\ds$ and $\eta_{r}=\int_{0}^{x} e^{\overline{U_{r}}(s)}\ds$ for $x<t$ will be submartingales. Therefore, they will satisfy many of the known theorems about stopping times (such as optional stopping theorem). We have not found any connection between the hitting time for the full measure $\eta$ and those of $\eta_{l},\eta_{r}$ because we have not found any way to express $\eta$ in terms of them or at least bound by since their densities are distributions (so there might not be any relation or inequality between them). Possibly by conditioning on the fiels $U_{i},U_{r}$ but leaving $U_{l}$ random, we can study the stopping times of this measure and try to relate it to the original one. Similarly when we use the decomposition $U_{s}=B_{s}+Y_{s}$, where $B_{s}$ is Brownian motion and $Y_{s}$ is an independent distributional logarithmic field, as studied in \citep{kupiainen2020integrability}, we can condition on $Y_{s}$ and try to express the stopping time of $\eta$ in terms of the integrated Geometric Brownian motion (IGBM) but with a base measure which is fractal (IGBM with Lebesgue measure and its hitting times have been studied already see here \citep{cui2017first}).  The field $U(s)\cap U(t)$ shows up naturally when studying the GMC-measure $\eta(0,t)$ as a function of time.
\begin{lemma}
Let $X_{t}:=\eta(t)-t$ and condition on the filtration $\mc{F}_{t}=\sigma(\set{U_{s}}_{0\leq s\leq t})$ to find
\begin{eqalign}
\Expe{\int_{0}^{t+r}e^{U(s)}\ds -(t+r)| \mc{F}_{t}}=&\int_{0}^{t}e^{U(s)}\ds -t+\int_{t}^{t+\delta\wedge r}e^{U(s)\cap U(t)}-1\ds\\
=:&\eta(t)-t+R_{t,r}.
\end{eqalign}
So we see that $X_{t}$ is away from having any sub/supermartingale property precisely due to the fluctuations of $e^{U(s)\cap U(t)}-1$.
\end{lemma}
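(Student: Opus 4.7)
The plan is to exploit the additive decomposition $U(s) = W(U(s)\cap U(t)) + W(U(s)\setminus U(t))$ of the white noise, whose two summands are independent because the integration regions are disjoint Borel subsets of $\mathbb{H}$ of finite hyperbolic area. This factorizes the renormalized GMC exponential as $e^{\overline{U(s)}} = e^{\overline{U(s)\cap U(t)}}\cdot e^{\overline{U(s)\setminus U(t)}}$, each term with its own Gaussian renormalization, and the proof reduces to showing that with respect to $\mathcal{F}_t = \sigma(\{U(u)\}_{0\leq u\leq t})$ the first factor is measurable and the second is independent.

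First I would verify these two properties for $s > t$. The inclusion $U(s)\cap U(t) \subseteq U(t) \subseteq \bigcup_{u\leq t} U(u)$ gives that $W(U(s)\cap U(t))$ is $\mathcal{F}_t$-measurable. For the independence of $W(U(s)\setminus U(t))$ from $\mathcal{F}_t$, I would use the explicit wedge geometry: at height $y$ the wedge $U(u)$ has $x$-range $[u - \min(y/2,\delta/2),\, u + \min(y/2,\delta/2)]$, and a direct case analysis on the layers $y\leq s - t$ and $y > s - t$ shows that every point of $U(s)\setminus U(t)$ has $x$-coordinate strictly greater than $u + \min(y/2,\delta/2)$ whenever $u < t$. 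Consequently $U(s)\setminus U(t)$ is disjoint from the generating region $\bigcup_{u\leq t} U(u)$, which yields the required independence.

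Combining these, the conditional expectation factors as $\Expe{e^{\overline{U(s)}} \mid \mathcal{F}_t} = e^{\overline{U(s)\cap U(t)}}\cdot \Expe{e^{\overline{U(s)\setminus U(t)}}} = e^{\overline{U(s)\cap U(t)}}$, using the standard identity $\Expe{e^{\overline{W(A)}}} = 1$ for any Borel set $A$ of finite hyperbolic area. For $s\leq t$ the exponential $e^{\overline{U(s)}}$ is already $\mathcal{F}_t$-measurable, which accounts for the $\int_0^t e^{\overline{U(s)}}\,ds - t = \eta(t) - t$ contribution. The truncation of the second integral at $t + \delta\wedge r$ is then automatic: once $|s - t| > \delta$ the wedges $U(s)$ and $U(t)$ have disjoint $x$-supports, so $W(U(s)\cap U(t)) = 0$ and $e^{\overline{U(s)\cap U(t)}} - 1$ vanishes. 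Fubini then assembles the two pieces into the stated formula.

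The main technical point is the exchange of conditional expectation and integration in the presence of the GMC renormalization. I would handle this by first performing the entire decomposition at the level of the $\varepsilon$-regularized fields $U_\varepsilon$, where each step becomes a routine Gaussian computation (the conditional law of a sum of independent Gaussians is explicit), and then pass to the limit $\varepsilon\to 0$ using the $L^2$-convergence of $\eta_\varepsilon$ to $\eta$ on bounded intervals invoked throughout the paper (cf.\ the notations section and \cref{rateofconvergence}).
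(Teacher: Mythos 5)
Your proposal is correct and takes essentially the same route as the paper's own proof: decompose $U(s)=U(s)\cap U(t)+U(s)\setminus U(t)$, use that the intersection part is $\mc{F}_{t}$-measurable while the set-difference part is independent of $\mc{F}_{t}$ with normalized exponential of mean one, and note that the intersection vanishes once $s-t>\delta$, which yields the truncation at $t+\delta\wedge r$. The only cosmetic differences are that the paper first splits the integral over $[0,t]$, $[t,t+\delta\wedge r]$, $[t+\delta\wedge r,t+r]$ before decomposing the field on the middle piece, whereas you decompose uniformly for all $s>t$, and that you spell out the disjointness geometry and the $\e$-regularized limit, which the paper leaves implicit.
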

\begin{proof}
We split the integral conditional expectation above into the three pieces
\begin{eqalign}
&\Expe{\int_{\min(t+\delta,t+r)}^{t+r}e^{U(s)}\ds -r| \mc{F}_{t}}+\Expe{\int_{t}^{\min(t+\delta,t+r)}e^{U(s)}\ds | \mc{F}_{t}}\\
&+\Expe{\int_{0}^{t}e^{U(s)}\ds -t| \mc{F}_{t}}.
\end{eqalign}
The first term is independent of $\mc{F}_{t}$ and the third term is completely determined by $\mc{F}_{t}$
\begin{eqalign}
&(t+r-(t+\delta\wedge r)) -r+\Expe{\int_{t}^{t+\delta\wedge r}e^{U(s)}\ds | \mc{F}_{t}}+\int_{0}^{t}e^{U(s)}\ds -t.
\end{eqalign}
So it remains to study the middle term. We split the field $U(s)=U(s)\cap U(t)+U(s)\setminus U(t)$, where the first term is completely determined by $\mc{F}_{t}$ and the second term is independent of it. Therefore, since $\Expe{e^{U_{\e}(s)\setminus U_{\e}(t)}}=1 $ we indeed find
\begin{eqalign}
\Expe{\int_{0}^{t+r}e^{U(s)}\ds -(t+r)| \mc{F}_{t}}=\int_{0}^{t}e^{U(s)}\ds -t+\int_{t}^{t+\delta\wedge r}e^{U(s)\cap U(t)}-1\ds.
\end{eqalign}
\end{proof}

\end{enumerate}

\begin{appendices}

\section{Gaussian fields }

\subsection{Gaussian field estimates}
\pparagraph{Concentration inequalities}We will need the supremum estimate for Gaussian fields called Borel-Tsirelson-Ibragimov-Sudakov (BTIS) inequality to get the upper bound.
\begin{theorem}\label{suprconcen}\cite[theorem 4.1.1]{adler2009random} 
Let $S=\bigcup_{k}I_{k}$ and suppose that the Gaussian field $Y_{t},t\in S$ has $L$-Lipschitz continuous covariance $\Expe{\abs{Y_{t}-Y_{s}}^{2}}\leq L\abs{s-t}$ and $Y_{t_{0}}=0$ with $t_{0}\in S$. Then
\begin{eqalign}
\Proba{\supl{t\in S}Y_{t}\geq \sqrt{L\abs{S}}u  }\leq c(1+u)e^{-u^{2}/2},
\end{eqalign}
for some uniform constant $c$.
\end{theorem}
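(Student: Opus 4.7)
The plan is to combine the Borel--TIS concentration inequality with a Dudley-type control on the expected supremum. First, since $Y_{t_{0}}=0$, the Lipschitz covariance hypothesis immediately gives
\begin{equation}
\sigma^{2} := \supl{t\in S}\Expe{Y_{t}^{2}} = \supl{t\in S}\Expe{|Y_{t}-Y_{t_{0}}|^{2}} \leq L\supl{t\in S}|t-t_{0}| \leq L|S|,
\end{equation}
where $|S|$ is interpreted as (an upper bound on) the diameter of $S$, consistent with the setting $S=\bigcup_{k}I_{k}$.

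Second, I would bound $\Expe{\supl{t\in S}Y_{t}}$ by Dudley's entropy integral in the canonical pseudometric $d(s,t):=\sqrt{\Expe{|Y_{s}-Y_{t}|^{2}}}\leq\sqrt{L|s-t|}$. A $d$-ball of radius $\epsilon$ corresponds to a Euclidean ball of radius $\epsilon^{2}/L$, so $N(\epsilon,S,d)\lesssim L|S|/\epsilon^{2}$ and the $d$-diameter is at most $\sqrt{L|S|}$; after substituting $\epsilon=\sqrt{L|S|}\,v$ one obtains
\begin{equation}
\Expe{\supl{t\in S}Y_{t}} \leq c_{1}\int_{0}^{\sqrt{L|S|}}\sqrt{\log N(\epsilon)}\,d\epsilon \leq C_{0}\sqrt{L|S|}
\end{equation}
for a universal $C_{0}$. (Equivalently, one could invoke a Slepian/Fernique comparison of $Y$ to a time-changed Brownian motion and cite $\Expe{\supl{[0,T]}B_{t}}=\sqrt{2T/\pi}$.) Then by Borel--TIS, $\Proba{\supl{t}Y_{t}\geq\Expe{\supl{t}Y_{t}}+r}\leq e^{-r^{2}/(2\sigma^{2})}$; taking $r=\sqrt{L|S|}(u-C_{0})$ for $u\geq C_{0}$ and using $\sigma^{2}\leq L|S|$ yields
\begin{equation}
\Proba{\supl{t\in S}Y_{t}\geq\sqrt{L|S|}\,u}\leq e^{-(u-C_{0})^{2}/2}.
\end{equation}

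The final step is to recast this as $c(1+u)e^{-u^{2}/2}$. For $u\leq 2C_{0}$ the trivial bound $1\leq c(1+u)$ suffices. For $u\geq 2C_{0}$ one applies the Mills-ratio sharpening of the Gaussian tail $\Proba{Z\geq v}\leq(v\sqrt{2\pi})^{-1}e^{-v^{2}/2}$ in place of the looser exponential form of Borel--TIS, producing an extra $u^{-1}$ prefactor; combined with adjusting the universal constant this yields the stated form, with $(1+u)$ serving as a clean upper bound on the (sharper) Mills factor times any bounded residual correction.

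The main obstacle is precisely this bookkeeping: the natural Borel--TIS output has exponent $-(u-C_{0})^{2}/2$, which differs from $-u^{2}/2$ by the factor $e^{C_{0}u-C_{0}^{2}/2}$, and this factor cannot be absorbed into the polynomial $(1+u)$ for arbitrarily large $u$ without the Mills-ratio refinement. An explicit case split in $u$ together with the Mills sharpening is therefore essential. The variance bound and the Dudley/Fernique estimate on the mean are routine once the canonical metric is identified.
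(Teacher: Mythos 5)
Note first that the paper offers no proof of this statement at all: it is quoted verbatim as the Borell--TIS supremum estimate with a citation to Adler--Taylor, so your write-up is being measured against the literature rather than against an argument in the text. Your first three steps are fine (the variance bound $\sigma^{2}\leq L\abs{S}$ with $\abs{S}$ read as the diameter, and the Dudley/Sudakov--Fernique bound $\Expe{\supl{t\in S}Y_{t}}\leq C_{0}\sqrt{L\abs{S}}$ for a universal $C_{0}$), but the final step contains a genuine gap. Concentration around the mean gives, at best, $\Proba{\supl{t\in S}Y_{t}\geq \sqrt{L\abs{S}}\,u}\leq \Psi(u-C_{0})$, where $\Psi$ is the standard Gaussian tail; relative to the target $c(1+u)e^{-u^{2}/2}$ this is off by the factor $e^{C_{0}u-C_{0}^{2}/2}$, which grows \emph{exponentially} in $u$. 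The Mills-ratio refinement you invoke only improves the bound by a factor of order $1/(u-C_{0})$, i.e.\ polynomially, so it cannot absorb $e^{C_{0}u}$: for large $u$ the inequality $e^{-(u-C_{0})^{2}/2}\leq c(1+u)e^{-u^{2}/2}$ is simply false for any fixed $c$, with or without the extra $1/u$. You correctly identify the mismatch in your last paragraph, but the proposed remedy does not close it.

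To actually obtain the stated form $c(1+u)e^{-u^{2}/2}$ one needs a mechanism that produces the \emph{exact} Gaussian rate $e^{-u^{2}/2}$ with only a polynomial prefactor, not concentration around a mean which is itself of size $C_{0}\sqrt{L\abs{S}}$. The standard routes are: (i) the entropy-based tail bound (Adler--Taylor, Theorem 4.1.2, or Fernique's argument), where the covering numbers $N(\e)\lesssim L\abs{S}/\e^{2}$ enter multiplicatively and yield a prefactor polynomial in $u$ times $\Psi(u)$; (ii) a discretization/chaining argument carried out at a scale depending on $u$, so that the entropy cost appears as a factor $(1+u)$ rather than as an additive shift inside the exponent; or (iii) in this one-dimensional, square-root-Lipschitz setting, a comparison with time-changed Brownian motion combined with the reflection principle. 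Any of these would repair the proof; Borell--TIS plus Mills ratio alone cannot. A minor additional caveat: for $S=\bigcup_{k}I_{k}$ with widely separated intervals the diameter may exceed the total length, so the meaning of $\abs{S}$ (diameter versus measure) must be fixed consistently in the variance and entropy bounds — you flag this, and the paper is equally vague, so it is not the substantive issue.
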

 We also have bounds for the expected value of the supremum \cite[Theorem 6.5]{van2014probability}. Consider centered Gaussian field $\para{X_{t}}_{t\in T}$ in some index set $T$. Let
\begin{eqalign}
d(s,t):=\sqrt{\Expe{\abs{X_{s}-X_{t}}^{2}}}
\end{eqalign}
denote a pseudo-metric on $T$ and the \textit{entropy number} $N(T, d, \e)$ equal the minimum number of balls of size $\e>0$ needed to cover $T$ in the $d$-metric. We have the following bounds on the expected supremum and its tail.
\begin{theorem}\label{thm:entropyintegralest}(Entropy integral)\cite[Corollary 5.25]{van2014probability}For a separable subGaussian process $\para{X_{t}}_{t\in T}$ on the metric space $(T,d)$, we have the following estimate
\begin{eqalign}
\Expe{\sup_{t}X_t}\leq 12\int_{0}^{\diam(T)}\sqrt{\ln N(T, d, \e)}\dint\epsilon. \end{eqalign}
\end{theorem}
\begin{theorem}(Local chaining inequality)\cite[Proposition 5.35]{van2014probability}For a separable subGaussian process $\para{X_{t}}_{t\in T}$ on the metric space $(T,d)$, we have the following estimate for each fixed $t_{0}\in T$
\begin{eqalign}
\Proba{\supls{t\in T\\ d(s,t)\leq r}\para{X_{t}-X_{t_{0}}}\geq  C\int_{0}^{r}\sqrt{\ln N(T, d, \e)}\dint\epsilon+x}\leq C\expo{-\frac{x^{2}}{Cr^{2}}},
\end{eqalign}
for some universal constant $C$.
\end{theorem}
\begin{theorem}\label{them:BTIS}(BTIS)\cite[Theorem 2.3.1]{adler2009random}
Let $X_{t}$ be a centered Gaussian process, a.s. bounded on $T$. Let $\norm{X}:=\supls{t\in T}X_{t}$ and $\sigma_{T}^{2}:=\sup_{t\in T}\Expe{X_{t}^{2}}$. Then for all $u>\Expe{\norm{X}}$
\begin{eqalign}
 \Proba{\supls{t\in T}X_{t}\geq u  }\leq 2\expo{-\frac{(u-\Expe{\norm{X}})^{2}}{2\sigma_{T}^{2}}}. 
\end{eqalign}
\end{theorem}
\pparagraph{Comparison inequalities}
 We will use the Kahane's inequality (eg. \cite[Lemma 4]{wong2020universal},\cite[corollary A.2]{robert2010gaussian}).
\begin{theorem}(Kahane Inequality)\label{Kahanesinequality}
Let $\rho$ be a Radon measure on a domain $D\subset\R^{n}$, $X(\cdot)$ and $Y(\cdot)$ be two continuous centred Gaussian fields on $D$, and $F: \Rplus \to \R$ be some smooth function with at most polynomial growth at infinity. For $t \in [0,1]$, define $Z_t(x) = \sqrt{t}X(x) + \sqrt{1-t}Y_t(x)$ and
\begin{eqalign}
\varphi(t) := \EE \left[ F(W_t)\right], \qquad
W_t := \int_D e^{Z_t(x) - \frac{1}{2}\EE[Z_t(x)^2]} \rho(dx).
\end{eqalign}

 Then the derivative of $\varphi$ is given by
\begin{eqalign}\label{eq:Kahane_int}
\begin{split}
\varphi'(t) & = \frac{1}{2} \int_D \int_D \left(\EE[X(x) X(y)] - \EE[Y(x) Y(y)]\right) \\
& \qquad \qquad \times \EE \left[e^{Z_t(x) + Z_t(y) - \frac{1}{2}\EE[Z_t(x)^2] - \frac{1}{2}\EE[Z_t(y)^2]} F''(W_t) \right] \rho(dx) \rho(dy).
\end{split}
\end{eqalign}

 In particular, if
\begin{eqalign}
\EE[X(x) X(y)] \le \EE[Y(x) Y(y)] \qquad \forall x, y \in D,
\end{eqalign}

 then for any convex $F: \Rplus \to \R$
\begin{eqalign}\label{eq:Gcomp}
\EE \left[F\left(\int_D e^{X(x) - \frac{1}{2} \EE[X(x)^2]}\rho(dx)\right)\right]
\le
\EE \left[F\left(\int_D e^{Y(x) - \frac{1}{2} \EE[Y(x)^2]}\rho(dx)\right)\right].
\end{eqalign}

 and the inequality is reversed if $F$ is concave instead. In the case of distributional fields $X,Y$, we use this result for the mollifications $X_{\e},Y_{\e}$ and pass to the limit due to the continuity of $F$.
\end{theorem}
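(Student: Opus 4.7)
The plan is to prove the derivative formula by direct differentiation of $\varphi(t)=\mathbb{E}[F(W_t)]$ under the integral, followed by Gaussian integration by parts (Stein's lemma), and then deduce the comparison inequality as an immediate corollary. First I would assume without loss of generality that $X$ and $Y$ are independent (replace them by independent copies on a product probability space — this does not affect the marginal laws of $W_0$ and $W_1$). Under independence, $\mathbb{E}[Z_t(x)^2] = t\,\mathbb{E}[X(x)^2] + (1-t)\mathbb{E}[Y(x)^2]$, so
\begin{equation*}
\partial_t \Bigl(Z_t(x) - \tfrac{1}{2}\mathbb{E}[Z_t(x)^2]\Bigr) = \frac{X(x)}{2\sqrt{t}} - \frac{Y(x)}{2\sqrt{1-t}} - \tfrac{1}{2}\bigl(\mathbb{E}[X(x)^2] - \mathbb{E}[Y(x)^2]\bigr).
\end{equation*}
Differentiating $W_t$ under the $\rho$-integral and then differentiating $\varphi(t)=\mathbb{E}[F(W_t)]$ (the polynomial growth of $F,F',F''$ together with finiteness of exponential moments of $Z_t(x)$ justifies the swap) yields
\begin{equation*}
\varphi'(t) = \mathbb{E}\!\left[F'(W_t)\int_D e^{Z_t(x)-\frac{1}{2}\mathbb{E}[Z_t(x)^2]}\!\left(\frac{X(x)}{2\sqrt{t}}-\frac{Y(x)}{2\sqrt{1-t}}-\tfrac{1}{2}(\mathbb{E}[X(x)^2]-\mathbb{E}[Y(x)^2])\right)\rho(dx)\right].
\end{equation*}

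Next I would apply Gaussian integration by parts separately to the $X(x)$-term and the $Y(x)$-term. Using $\mathbb{E}[X(x)Z_t(y)] = \sqrt{t}\,\mathbb{E}[X(x)X(y)]$ (from independence) and the standard identity $\mathbb{E}[X(x)\,G(\{Z_t(y)\}_y)] = \int_D \mathbb{E}[X(x)Z_t(y)]\,\mathbb{E}[\partial_{Z_t(y)}G]\,\rho(dy)$ applied to $G = F'(W_t)\,e^{Z_t(x)-\frac{1}{2}\mathbb{E}[Z_t(x)^2]}$, the Gateaux derivative $\partial_{Z_t(y)}G$ produces two contributions: a term $e^{Z_t(y)-\frac12\mathbb{E}[Z_t(y)^2]} F''(W_t)$ from differentiating inside $F'$, and a term $\mathbf{1}_{x=y}F'(W_t)e^{Z_t(x)-\frac{1}{2}\mathbb{E}[Z_t(x)^2]}$ from differentiating the exponential. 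The diagonal contribution exactly cancels the non-Gaussian drift $\frac{1}{2}\mathbb{E}[X(x)^2]$ after using $\mathbb{E}[X(x)Z_t(x)]=\sqrt{t}\,\mathbb{E}[X(x)^2]$. Crucially, the remaining $\sqrt{t}$ from the covariance pairs with the $\frac{1}{2\sqrt{t}}$ prefactor to produce a clean $\frac{1}{2}$. Performing the exact same calculation for the $Y(x)$-term (with $\sqrt{1-t}$) and subtracting gives
\begin{equation*}
\varphi'(t) = \frac{1}{2}\iint_{D\times D}\bigl(\mathbb{E}[X(x)X(y)]-\mathbb{E}[Y(x)Y(y)]\bigr)\,\mathbb{E}\!\left[e^{Z_t(x)+Z_t(y)-\frac{1}{2}\mathbb{E}[Z_t(x)^2]-\frac{1}{2}\mathbb{E}[Z_t(y)^2]}F''(W_t)\right]\rho(dx)\rho(dy),
\end{equation*}
which is the claimed formula.

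For the comparison inequality, observe that the exponential factor inside the expectation is almost surely positive and $F''\ge 0$ when $F$ is convex, so if $\mathbb{E}[X(x)X(y)]\le \mathbb{E}[Y(x)Y(y)]$ pointwise then $\varphi'(t)\le 0$ for every $t\in(0,1)$. Integrating gives $\varphi(1)\le \varphi(0)$, i.e., $\mathbb{E}[F(W_1)]\le \mathbb{E}[F(W_0)]$, which upon identifying $Z_1=X$ and $Z_0=Y$ is precisely inequality \eqref{eq:Gcomp}. The concave case follows by applying the convex case to $-F$. For the distributional extension mentioned in the statement, one applies the smooth case to the mollifications $X_\varepsilon,Y_\varepsilon$ (whose covariances inherit the inequality) and passes to the limit using the continuity of $F$ together with the $L^1$-convergence of $\int_D e^{X_\varepsilon-\frac{1}{2}\mathbb{E}[X_\varepsilon^2]}\rho(dx)$.

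The main technical obstacle is the Gaussian integration by parts step, and in particular the careful bookkeeping needed to see the cancellation of the $\sqrt{t}$ and $\sqrt{1-t}$ factors against the singular derivatives $1/(2\sqrt{t})$ and $1/(2\sqrt{1-t})$, as well as the cancellation of the diagonal boundary terms against the variance-normalization drifts $\mathbb{E}[X(x)^2]$ and $\mathbb{E}[Y(x)^2]$. A secondary, more routine, obstacle is justifying the interchange of $\partial_t$, $\mathbb{E}$, and $\int_D\rho(dx)$: this follows from the polynomial growth assumption on $F$ combined with uniform (in $t$ on compacts of $(0,1)$) bounds on $\mathbb{E}[W_t^p]$ and $\mathbb{E}[|Z_t(x)|^k e^{Z_t(x)-\frac{1}{2}\mathbb{E}[Z_t(x)^2]}]$, which are standard Gaussian moment estimates, together with dominated convergence.
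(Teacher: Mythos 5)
The paper does not prove this theorem: it is stated in the appendix as a cited result (see the line immediately preceding it, ``We will use the Kahane's inequality (eg.\ \cite[corollary A.2]{robert2010gaussian})''). So there is no in-paper proof to compare against.

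That said, your argument is the standard Kahane interpolation proof, and it is essentially correct. You correctly reduce to independent $X,Y$, differentiate $\varphi$ under the integral, apply Gaussian integration by parts to the resulting $X(x)$- and $Y(x)$-weighted terms, and observe that the covariance factor $\mathbb{E}[X(x)Z_t(y)]=\sqrt{t}\,\mathbb{E}[X(x)X(y)]$ pairs with the $1/(2\sqrt{t})$ prefactor to produce the clean $\tfrac12$, while the diagonal contribution $\mathbb{E}[X(x)Z_t(x)]=\sqrt{t}\,\mathbb{E}[X(x)^2]$ cancels the $-\tfrac12\mathbb{E}[X(x)^2]$ drift term; symmetrically for $Y$. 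The one place where you are slightly informal is the ``$\mathbf{1}_{x=y}$'' notation for the diagonal term in the functional derivative. In the continuous-index setting this should really be handled by first proving the identity for a finite-dimensional discretization of the field (which is where Kahane's original argument operates, and where the Stein identity $\mathbb{E}[\xi\, g(\eta)]=\sum_i\mathbb{E}[\xi\eta_i]\,\mathbb{E}[\partial_i g(\eta)]$ is literally a finite sum) and then passing to the limit, or equivalently by invoking the Malliavin derivative rather than an indicator. With that caveat the computation, the deduction $\varphi'\le 0$ from $F''\ge 0$, and the reduction of the mollified/distributional case to the smooth one, are all correct and match the proof in \cite{robert2010gaussian}.
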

 The interpolation has implications for the max/min of Gaussian fields.
\begin{theorem}\label{th:slepian}(Slepian's lemma)\cite[Theorem 7.2.1]{vershynin2018high}
Let $\set{X_t}_{t\in T}$ and $\set{Y_t}_{t\in T}$ be two mean zero Gaussian processes. Assume that for all $t, s \in T$, in some index set $T$, we have
\begin{eqalign}
\Expe{X_{t}^{2}}=\Expe{Y_{t}^{2}}    \tand \Expe{X_{t}X_{s}}\geq \Expe{Y_{t}Y_{s}}.
\end{eqalign}
Then for every $\tau\in \R$, we have
\begin{eqalign}
\Proba{\sup_{t\in T}X_{t}\geq \tau }\leq \Proba{\sup_{t\in T}Y_{t}\geq \tau }\tand \Proba{\inf_{t\in T}X_{t}\leq -\tau }\leq \Proba{\inf_{t\in T}Y_{t}\leq -\tau }.
\end{eqalign}
\end{theorem}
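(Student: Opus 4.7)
The plan is to prove Slepian's lemma via Gaussian interpolation, which is precisely the setup provided by the Kahane inequality (Theorem \ref{Kahanesinequality}) stated just before it. First, I would reduce to the case of finite $T$: since both sides of the claimed inequality are determined by the finite-dimensional distributions and $T$ may be replaced by a countable dense subset (and then by an increasing finite exhaustion, using monotone convergence for the events $\{\sup_t X_t \le \tau\}$), it suffices to take $T = \{1,\ldots,n\}$. Write $X = (X_1,\ldots,X_n)$ and $Y = (Y_1,\ldots,Y_n)$, both centered Gaussian in $\mathbb{R}^n$ with identical variances and $\mathrm{Cov}(X_i,X_j) \le \mathrm{Cov}(Y_i,Y_j)$.

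Next I would introduce the interpolating process $Z_t = \sqrt{t}\, X + \sqrt{1-t}\, Y'$ with $Y'$ an independent copy of $Y$ independent of $X$, so that $Z_0 \stackrel{d}{=} Y$ and $Z_1 \stackrel{d}{=} X$; the covariance of $Z_t$ equals $t\,\mathrm{Cov}(X) + (1-t)\,\mathrm{Cov}(Y)$, which has the same diagonal as both $X$ and $Y$. For a smooth compactly-supported approximation $f_\varepsilon$ of the indicator $\mathbf{1}_{(-\infty,\tau]}$ (specifically, a nonincreasing $C^\infty$ function with $f_\varepsilon \uparrow \mathbf{1}_{(-\infty,\tau]}$ pointwise as $\varepsilon \downarrow 0$), set
\begin{equation}
\varphi(t) := \mathbb{E}\!\left[ \prod_{i=1}^n f_\varepsilon(Z_t^i) \right], \qquad t \in [0,1].
\end{equation}
Then I would differentiate $\varphi$ and apply Gaussian integration by parts (equivalently, the computation in \eqref{eq:Kahane_int} applied coordinatewise rather than to a single chaos variable; this is the standard Slepian--Chentsov computation). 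The result is
\begin{equation}
\varphi'(t) = \sum_{i<j} \bigl(\mathrm{Cov}(X_i,X_j) - \mathrm{Cov}(Y_i,Y_j)\bigr) \, \mathbb{E}\!\left[ f_\varepsilon'(Z_t^i)\, f_\varepsilon'(Z_t^j) \prod_{k \ne i,j} f_\varepsilon(Z_t^k) \right].
\end{equation}
Because $f_\varepsilon$ is nonincreasing, $f_\varepsilon' \le 0$, so the expectation factor is nonnegative, while the covariance differences are $\le 0$ by hypothesis. Hence $\varphi'(t) \le 0$, so $\varphi(1) \le \varphi(0)$, i.e.\ $\mathbb{E}[\prod_i f_\varepsilon(X_i)] \le \mathbb{E}[\prod_i f_\varepsilon(Y_i)]$. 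Letting $\varepsilon \downarrow 0$ and invoking monotone convergence yields $\mathbb{P}(\sup_i X_i \le \tau) \le \mathbb{P}(\sup_i Y_i \le \tau)$, and the infimum version follows by applying the supremum version to $-X$ and $-Y$ (which have the same covariance structure).

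The main obstacle is the Gaussian integration by parts step in the presence of the product structure: Kahane's inequality as stated in Theorem \ref{Kahanesinequality} is written for the exponential functional appearing in GMC, but the same proof (conditioning on one coordinate, writing $Z_t^i = \sqrt{t}X_i + \sqrt{1-t}Y_i'$ and using $\tfrac{d}{dt}\mathbb{E}[g(Z_t^i)] = \tfrac{1}{2\sqrt{t(1-t)}}\mathbb{E}[(X_i - Y_i')g'(Z_t^i)]$ combined with the classical Gaussian integration-by-parts identity $\mathbb{E}[\xi\, h(\xi,\eta)] = \sum_j \mathrm{Cov}(\xi,\eta_j)\mathbb{E}[\partial_j h]$) carries over verbatim to any smooth $F$, and in particular to $F(z) = \prod_i f_\varepsilon(z_i)$. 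The rest (the sign of $\varphi'$, the limit $\varepsilon \downarrow 0$, and the reduction from general $T$ to finite $T$) is routine.
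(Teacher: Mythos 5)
Your proposal is correct: the Gaussian-interpolation (smart-path) argument with $Z_t=\sqrt{t}\,X+\sqrt{1-t}\,Y'$, the sign analysis of $\partial_i\partial_j F\ge 0$ for a product of smooth nonincreasing functions, and the reduction of the infimum statement to the supremum statement applied to $-X,-Y$ is exactly the standard proof of Slepian's lemma. The paper itself gives no proof — it imports the statement from the cited reference \cite[Theorem 7.2.1]{vershynin2018high}, whose proof is essentially the same interpolation argument you wrote — so the only (routine) caveats are the usual separability assumption needed to pass from finite to general index sets $T$ and the direction of the monotone approximation of the indicator, both of which you correctly flag as standard.
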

 We will also consider another comparison inequality for increasing/decreasing functionals which generalizes the Harris-inequality \cite[Theorem 2.15]{boucheron2013concentration} that is called FKG inequality for Gaussian fields \citep{pitt1982positively},\cite[Theorem 3.36]{berestycki2021gaussian}.
\begin{theorem}[FKG inequality]\label{FKGineq}
Let $\set{Z(x)}_{x\in U}$ be an a.s. continuous centred Gaussian field on $U \subset \R^d$ with $\Expe{Z(x)Z(y)} \geq 0$ for all $x, y \in U$. Then, if $f, g$ are two bounded, increasing measurable functions,
\begin{eqalign}
\Expe{f\para{\set{Z(x)}_{x\in U}}g\para{\set{Z(x)}_{x\in U}}}    \geq \Expe{f\para{\set{Z(x)}_{x\in U}}}\Expe{g\para{\set{Z(x)}_{x\in U}}}
\end{eqalign}
and the opposite inequality if one function is increasing and the other one is decreasing
\begin{eqalign}
\Expe{f\para{\set{Z(x)}_{x\in U}}g\para{\set{Z(x)}_{x\in U}}}    \leq \Expe{f\para{\set{Z(x)}_{x\in U}}}\Expe{g\para{\set{Z(x)}_{x\in U}}}.
\end{eqalign}
\end{theorem}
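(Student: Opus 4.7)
The plan is to reduce to the finite-dimensional case by a discretization/approximation argument, and then to treat the finite-dimensional FKG inequality via a Gaussian interpolation between the joint and the independent couplings. First I would take a sequence of finite sets $F_n \subset U$ with $F_n \uparrow$ a countable dense subset of $U$. By a.s.~continuity of $Z$, for any bounded increasing measurable $f$ on continuous functions, $f(Z|_{F_n}) \to f(Z)$ a.s., so by dominated convergence it suffices to prove the inequality with $f,g$ functions of $Z$ restricted to a finite set. That reduces everything to the claim: if $X = (X_1,\dots,X_m)$ is a centered Gaussian vector with $\Sigma_{ij} := \mathbb{E}[X_iX_j] \geq 0$, and $\tilde f,\tilde g : \mathbb{R}^m \to \mathbb{R}$ are bounded measurable and coordinatewise increasing, then $\mathbb{E}[\tilde f(X)\tilde g(X)] \geq \mathbb{E}[\tilde f(X)]\mathbb{E}[\tilde g(X)]$.

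Next I would handle the finite-dimensional step by a direct application of the Kahane-style interpolation of \cref{Kahanesinequality}. Let $\tilde X$ be an independent copy of $X$ and consider the $2m$-dimensional Gaussian vector $W(t)$ with covariance
\begin{equation}
K(t) \;=\; \begin{pmatrix} \Sigma & t\Sigma \\ t\Sigma & \Sigma \end{pmatrix}, \qquad t\in [0,1],
\end{equation}
which linearly interpolates between the independent coupling at $t=0$ and the perfectly-coupled one $(X,X)$ at $t=1$. Set $\varphi(t) := \mathbb{E}\bigl[\tilde f(W_{1:m}(t))\,\tilde g(W_{m+1:2m}(t))\bigr]$. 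Smoothing $\tilde f,\tilde g$ by convolution with a Gaussian of small variance gives smooth increasing $\tilde f_\varepsilon,\tilde g_\varepsilon$ with nonnegative partial derivatives; applying Gaussian integration by parts (equivalently, the Kahane interpolation formula \eqref{eq:Kahane_int}) yields
\begin{equation}
\varphi_\varepsilon'(t) \;=\; \sum_{i,j=1}^{m} \Sigma_{ij}\, \mathbb{E}\Bigl[\partial_i \tilde f_\varepsilon(W_{1:m}(t))\, \partial_j \tilde g_\varepsilon(W_{m+1:2m}(t))\Bigr].
\end{equation}
Because every factor on the right is nonnegative by the monotonicity of $\tilde f_\varepsilon,\tilde g_\varepsilon$ and the hypothesis $\Sigma_{ij}\geq 0$, one gets $\varphi_\varepsilon'(t)\geq 0$, hence $\varphi_\varepsilon(1)\geq \varphi_\varepsilon(0)$, i.e.~$\mathbb{E}[\tilde f_\varepsilon(X)\tilde g_\varepsilon(X)] \geq \mathbb{E}[\tilde f_\varepsilon(X)]\mathbb{E}[\tilde g_\varepsilon(X)]$; letting $\varepsilon \to 0$ removes the smoothing by dominated convergence.

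Finally, I would pass from the finite-dimensional inequality back to the continuous field by taking $n \to \infty$ along $F_n$, using boundedness of $f,g$ and dominated convergence. For the reversed inequality when one of $f,g$ is decreasing, apply the result just proved to $f$ and $-g$ (which is increasing and bounded) to get $\mathbb{E}[f(Z)(-g(Z))]\geq \mathbb{E}[f(Z)]\mathbb{E}[-g(Z)]$, equivalently $\mathbb{E}[f(Z)g(Z)]\leq \mathbb{E}[f(Z)]\mathbb{E}[g(Z)]$. The main obstacle is making sense of ``coordinatewise increasing'' for functionals of the full continuous field and justifying the discretization limit for merely measurable (possibly discontinuous in the sup-norm) increasing $f,g$; the cleanest route is to first prove the statement for bounded Lipschitz increasing $f,g$, where continuity of $Z \mapsto f(Z|_{F_n})$ and a.s.~convergence $Z|_{F_n} \to Z$ make the limit straightforward, and then extend to bounded measurable increasing functionals by a monotone-class argument.
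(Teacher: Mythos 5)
Your proof is correct and follows essentially the same route as the paper: the finite-dimensional Gaussian interpolation with derivative $\sum_{i,j}\Sigma_{ij}\,\Expe{\partial_i \tilde f\,\partial_j \tilde g}\geq 0$ is exactly Pitt's argument (equivalently the interpolation formula of \cref{Kahanesinequality}), which the paper invokes by citation to \cite{pitt1982positively} and \cite[Theorem 3.36]{berestycki2021gaussian} rather than reproving, and your reduction of the mixed increasing/decreasing case via $g\mapsto -g$ is precisely the paper's own proof of that case. The only genuinely delicate point, which you correctly flag, is the passage from finitely many coordinates to general bounded measurable increasing functionals of the continuous field; your plan (Lipschitz increasing functionals first, then a monotone-class/approximation step, with Gaussian mollification to justify the smoothing in finite dimensions) is the standard way the cited sources handle it.
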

\begin{proof}
The second case just follows by setting $\wt{g}=-g$. Here is a second proof of that second case too. In \citep{pitt1982positively}, they assume increasing for both $f,g$. However, in the proof they obtain the formula
\begin{eqalign}
F'(\lambda)=\frac{1}{\lambda}\int_{R^k}\phi(\bar{x})\para{\sum_{i,j}\sigma_{i,j}\frac{\partial f(\bar{x})}{\partial x_{i}}\sum_{i,j}\sigma_{i,j}\frac{\partial g(\bar{x})}{\partial x_{i}} }    \dint\bar{x}.
\end{eqalign}
So if we have $f$ increasing and $g$ decreasing, we get $F'(\lambda)\leq 0$ and so
\begin{eqalign}
F(1)\leq F(0).
\end{eqalign}
\end{proof}
\section{Moment estimates for GMC }
\subsection{Moments bounds of GMC}
 For positive continuous bounded function $g_{\delta}:\Rplus\to \Rplus$ with $g_{\delta}(x)=0$ for all $x\geq \delta$ and uniform bound $M_{g}:=\sup_{\delta\geq 0}\norm{g_{\delta}}_{\infty}$,  we will need the following moment estimates for $\eta^{\delta}=\eta_{g}^{\delta}$ in order to compute the moments of the inverse $Q_{g}^{\delta}$. As mentioned in the notations we need to study more general fields such as $U^{\delta,\lambda}$ in \Cref{eq:truncatedscaled}
\begin{eqalign}
&\Expe{U_{ \varepsilon}^{\delta,g }(x_{1} )U_{ \varepsilon}^{  \delta,g }(x_{2} )  }\\
=&\left\{\begin{matrix}
\ln(\frac{\delta }{\varepsilon} )-\para{\frac{1 }{\e}-\frac{1}{\delta}}\abs{x_{2}-x_{1}}+g_{\delta}(\abs{x_{2}-x_{1}})&\tifc \abs{x_{2}-x_{1}}\leq \varepsilon\\
 \ln(\frac{\delta}{\abs{x_{2}-x_{1}}})-1+\frac{\abs{x_{2}-x_{1}}}{\delta}+g_{\delta}(\abs{x_{2}-x_{1}}) &\tifc \e\leq \abs{x_{2}-x_{1}}\leq \delta\\
  0&\tifc \delta\leq \abs{x_{2}-x_{1}}
\end{matrix}\right.    .
\end{eqalign}
The following proposition is studying the moment bounds for this truncated case and it is the analogue of \cite[lemma A.1]{david2016liouville} where they study the exact scaling field in 2d.
\begin{proposition}\label{momentseta}
We have the following estimates:
\begin{itemize}
    \item  For $q\in (-\infty,0)\cup [1,\frac{2}{\gamma^{2}})$ we have
\begin{eqalign}
\Expe{\para{\eta^{\delta}[0,  t]}^{q}}\leq c_{1}\branchmat{t^{\zeta(q)}\delta^{\frac{\gamma^{2}}{2}(q^{2}-q)} &t\in [0,\delta]\\ t^{q}& t\in [\delta,\infty] },
\end{eqalign}
where $\zeta(q):=q-\frac{\gamma^{2}}{2}(q^{2}-q)$ is the multifractal exponent and $c_{1}:=e^{\frac{\gamma^{2}}{2}(q^{2}-q)M_{g}}\Expe{\para{\int_{0}^{1}e^{ \overline{\omega}^{1}(x)}\dx}^{q}}$ and $\omega^1$ is the exact-scaling field of height $1$.

\item  When $t,\delta\in [0,1]$ then we have a similar bound with no $\delta$
\begin{eqalign}
\Expe{\para{\eta^{\delta}[0,  t]}^{q}}\leq c_{1} t^{\zeta(q)},
\end{eqalign}
but this exponent is not that sharp: if $\delta\leq t\leq 1$ and $q>1$, we have $q>\zeta(q)$ and so the previous bound is better.

\item Finally, for all $t\in [0,\infty)$ and $q\in [0,1]$, we have
\begin{eqalign}
\Expe{\para{\eta^{\delta}[0,  t]}^{q}}\leq c_{2,\delta } t^{\zeta(q)}\delta^{\frac{\gamma^{2}}{2}(q^{2}-q)},
\end{eqalign}
where $c_{2,\delta}:=e^{\frac{\gamma^{2}}{2}(q^{2}-q)M_{g}}\Expe{\para{\int_{0}^{1}e^{ \overline{\omega}^{\delta}(x)}\dx}^{q}}$ where $\omega^{\delta}$ the exact-scaling field of height $\delta$.
\end{itemize}
Furthermore, for the positive moments in $p\in (0,1)$ of lower truncated GMC $\eta_{n}(0,t):=\int_{0}^{t}e^{U_{n}(s)}\ds$, we have the  bound
\begin{eqalign}\label{lowertrunp01}
\Expe{\para{\eta_{n}(0,t)}^{p}}\lessapprox~t^{\zeta(p)}, \forall t\geq 0.
\end{eqalign}
When $p\in(-\infty,0)\cup (1,\frac{2}{\gamma^{2}})$, we have
\begin{eqalign}
\Expe{\para{\eta_{n}(0,t)}^{p}}\leq \Expe{\para{\eta(0,t)}^{p}} , \forall t\geq 0.
\end{eqalign}
For the field $X(s):=U(s)\setminus U(0)$, $p\in [1,\beta^{-1})$ and $\alpha\leq1$ we have the estimate
\begin{eqalign}\label{eq:singularalphaintegralestimate}
\Expe{\para{\int_{0}^{t}\frac{1}{x^{\alpha}}e^{ X(x) }dx}^{p} }\leq  c   t^{p(1-\alpha)}.
\end{eqalign}

\end{proposition}

\begin{proof}
\proofparagraph{Case $t\in [0,\delta]$ and $q\in (-\infty,0]\cup [1,\frac{2}{\gamma^{2}})$}
 For $q\in (-\infty,0]\cup [1,\frac{2}{\gamma^{2}})$ we have that $F(x)=x^{q}$ is convex and so we need to get upper bounds on the covariance to apply Kahane's inequality \Cref{Kahanesinequality}. By change of variables we have $\eta(  t)=  t\int_{0}^{1}e^{ \overline{U}^{\delta,g}(  tx)}\dx$. The covariance of the field $U^{\delta,g}(  tx)$ covariance can be upper bounded:
\begin{eqalign}
\ln\frac{\delta}{t\abs{x-y}}-1+    \frac{t\abs{x-y}}{\delta}+g_{\delta}\para{t\abs{x-y}}\leq \ln\frac{\delta}{t}+ \ln\frac{1}{\abs{x-y}}+M_{g},
\end{eqalign}
for all $x,y\in [0 ,1]$. When both $\delta, t\leq 1$, then we can further simply ignore the nonpositive term $\ln\delta\leq 0$. Therefore, we apply Kahane's inequality \Cref{Kahanesinequality} for $F(x)=x^{ q}$ and the fields $U^{\delta,g}(tx)$ and $N(0,\ln\frac{\delta}{t}+M_{g} )+\omega^{1}(x)$ to upper bound
\begin{eqalign}\label{eq:tsmallerthandeltaqbiggerthan1ornegative}
  \Expe{\para{\int_{0}^{1}e^{ \overline{U}^{\delta}(tx)}\dx}^{q}}
  &\leq \Expe{\para{e^{\gamma\overline{N}(0,\ln\frac{\delta}{t}+M_{g}) }}^{q}} \Expe{\para{\int_{0}^{1}e^{ \overline{\omega}^{1}(x)}\dx}^{q}}\\&=ce^{\frac{q^{2}\gamma^{2}}{2}\ln\frac{\delta}{t} +\frac{q\gamma^{2}}{2}\ln\frac{\delta}{t} }=c\para{\frac{\delta}{t}}^{\frac{\gamma^{2}}{2}(q^{2}-q)}.
\end{eqalign}
\proofparagraph{Case $t\in [\delta,\infty)$ and $q\in (-\infty,0]\cup [1,\frac{2}{\gamma^{2}})$}
Here for the field $U^{\delta,g}( tx)$ we can upper bound its covariance:
\begin{eqalign}
\para{\ln\frac{\delta}{t\abs{x-y}}-1+    \frac{t\abs{x-y}}{\delta}+g_{\delta}\para{t\abs{x-y}}}\one_{\abs{x-y}\leq \minp{1,\frac{\delta}{t}}}\leq \ln\frac{1}{\abs{x-y}}+M_{g}
\end{eqalign}
for all $\abs{x-y}\leq 1$. So we will apply Kahane's inequality for the fields $U^{ \delta}(tx)$ and $N(0,M_{g})+\omega^{1}(x)$ to upper bound:
\begin{eqalign}
  \Expe{\para{\int_{0}^{1}e^{ \overline{U}^{\delta,g}(tx)}\dx}^{q}}\leq  \Expe{\para{e^{\gamma\overline{N}(0,M_{g}) }}^{q}} \Expe{\para{\int_{0}^{1}e^{ \overline{\omega}^{1}(x)}\dx}^{q}}=c.
\end{eqalign}
\proofparagraph{Case $t\in [0,\delta]$ and $q\in [0,1)$}
For $q\in (0,1)$ we have that $F(x)=x^{q}$ is concave and so we need to get lower bounds on the covariance. For $t\in [0,1]$ the field's $U^{\delta,g}(  tx)$ covariance has a lower bound:
\begin{eqalign}\label{lowercovar}
&\para{\ln\frac{\delta}{t\abs{x-y}}-1+    \frac{t\abs{x-y}}{\delta}+g_{\delta}\para{t\abs{x-y}}}\one_{\abs{x-y}\leq 1}\\
&\geq \ln\frac{\delta}{t}+\para{ \ln\frac{1}{\abs{x-y}}}\one_{\abs{x-y}\leq 1}-1,
\end{eqalign}
for all $x,y\in [0,1]$. Therefore, we apply Kahane's inequality for the fields $U^{ \delta}(tx)+N(0,1)$ and $\wt{N}(0,\ln\frac{\delta}{t})+\omega^{1}(x)$, where all four are independent, to upper bound
\begin{eqalign}
  \Expe{\para{\int_{0}^{1}e^{ \overline{U}^{ \delta}(tx)}\dx}^{q}}\leq &\frac{\Expe{\para{e^{\gamma\overline{\wt{N}}(0,\ln\frac{\delta}{t}) }}^{q}}}{\Expe{\para{e^{\gamma\overline{N}(0,1 }}^{q}}} \Expe{\para{\int_{0}^{1}e^{ \overline{\omega}^{1}(x)}\dx}^{q}}\\
  =&\frac{c}{e^{\frac{\gamma^{2}}{2}(q^{2}-q)}}\para{\frac{\delta}{t}}^{\frac{\gamma^{2}}{2}(q^{2}-q)}.
\end{eqalign}
\proofparagraph{Case $t\in [\delta,\infty)$ and $q\in [0,1)$}
 Here we use the same covariance lower bound \Cref{lowercovar} but because of $\ln \frac{\delta}{t}\leq 0$ we instead apply Kahane's inequality to the fields $U^{ \delta}(tx)+N(0,\ln\frac{t}{\delta}+1)$ and $\omega^{1}(x)$ to upper bound:
\begin{eqalign}\label{largetKahane}
  \Expe{F\para{\int_{0}^{1}e^{ \overline{U}^{ \delta}(tx)}\dx}}\leq&   \frac{1}{\Expe{F\para{e^{\gamma\overline{N}(0,\ln \frac{t}{\delta}+1) }}}} \Expe{F\para{\int_{0}^{1}e^{ \overline{\omega}^{1}(x)}\dx}}\\
  =&\frac{c}{e^{\frac{\gamma^{2}}{2}(q^{2}-q)}}\para{\frac{\delta}{t}}^{\frac{\gamma^{2}}{2}(q^{2}-q)}.
\end{eqalign}
\proofparagraph{Case $t\in [0,\infty)$ and $q\in [0,1)$:lower truncated $\eta_{\e}$}
As above we first do change for variables and study the field $U_{\e}^{\delta}(tx)$. The covariance for the truncated $U_{\e}^{\delta}(tx)$ is
\begin{eqalign}
\Expe{U_{ \varepsilon}^{  \delta }(tx_{1} )U_{ \varepsilon}^{  \delta }(tx_{2} )  }=\left\{\begin{matrix}
\ln(\frac{\delta }{\varepsilon} )-\para{\frac{1}{\e}-\frac{1}{\delta}}t\abs{x_{2}-x_{1}} &\tifc \abs{x_{2}-x_{1}}\leq \frac{\varepsilon}{t}\\
 \ln(\frac{\delta}{t\abs{x_{2}-x_{1}}}) +\frac{t\abs{x_{2}-x_{1}}}{\delta}-1&\tifc \frac{\varepsilon}{t}\leq \abs{x_{2}-x_{1}}\leq \frac{\delta}{t}\\
 0&\tifc  \frac{\delta}{t}\leq \abs{x_{2}-x_{1}}
\end{matrix}\right.    .
\end{eqalign}
Here we use a similar lower bound in $\e\leq t\abs{x-y}\leq \delta$ phase:
\begin{eqalign}
\ln\frac{\delta}{t\abs{x-y}}-1+    \frac{t\abs{x-y}}{\delta}\geq \ln\frac{\delta}{t\abs{x-y}}-1,
\end{eqalign}
and below it, we use that $\frac{t\abs{x-y}}{\e}\leq 1$ and so again
\begin{eqalign}
\ln\frac{\delta}{\e}  -\para{\frac{1}{\e}-\frac{1}{\delta}}t\abs{x_{2}-x_{1}}\geq   \ln\frac{\delta}{\e} -1.
\end{eqalign}
So we use the fields $N(0,1)+U^{\delta}_{\e}(tx)$ and $\omega^{\delta}_{\e}(tx)$. Then when $t\leq \delta$, we are in the position to apply the exact scaling law for $\omega$ to get the bound $t^{\zeta(p)}$. And when $t\geq \delta$, then as in \Cref{largetKahane} we similarly apply Kahane to the fields $\omega^{ \delta}(tx)+N(0,\ln\frac{t}{\delta}+1)$ and $\omega^{1}(x)$, to get again $t^{\zeta(p)}$.

\proofparagraph{Case $t\in [0,\infty)$ and $q\in(-\infty,0)\cup (1,\frac{2}{\gamma^{2}})$ : lower truncated $\eta_{\e}$}
When $q\in(-\infty,0)\cup (1,\frac{2}{\gamma^{2}})$, we can apply the logic of \cite[Lemma 6.]{bacry2003log}, which is basically using that $X_{n}:=\eta_{\e}$ is a martingale and so its convex function is a submartingale and thus bound by the GMC's moments:
\begin{eqalign}
\Expe{\para{\eta_{\e}(0,t)}^{q}}\leq \Expe{\para{\eta(0,t)}^{q}}.
\end{eqalign}

\proofparagraph{Case $t\in [0,\delta]$ and $p\in [1,\frac{2}{\gamma^{2}})$: singular/fusion $\eta$}
We only study $X(s):=U(s)\setminus U(0)$ since it is similar. For the covariance we have for $\delta\geq t-s>0$
\begin{eqalign}
\Expe{X(t)X(s)}=&\Expe{\para{U(t)\setminus U(0)} \cdot\para{  U(s)\setminus U(0) }   }\\
=&\Expe{\para{U(t)-U(t)\cap U(0)} \cdot  \para{U(s)-U(s)\cap U(0)}}\\
=&\Expe{U(t)U(s)}-\Expe{\para{U(t)\cap U(0)}^{2}}\\
=&\ln(\frac{\maxp{t,s}}{\abs{t-s}}) -\frac{\maxp{t,s}-\abs{t-s}}{\delta}.
\end{eqalign}
We observe that the logarithmic factor is scale-invariant
\begin{eqalign}
 \ln(\frac{\lambda\maxp{t,s}}{\lambda\abs{t-s}})=\ln(\frac{\maxp{t,s}}{\abs{t-s}}).   
\end{eqalign}
We can dominate $U(\lambda s)\setminus U(0)$  by a Gaussian field $Y$ with covariance $\ln(\frac{\maxp{t,s}}{\abs{t-s}})1_{\abs{t-s}\leq \delta}$
\begin{eqalign}\label{eq:singularintegral1}
\Expe{\para{\int_{0}^{t}\frac{1}{x^{\alpha}}e^{ \overline{X}(x) }dx}^{p} }=&\frac{1}{t^{(\alpha-1)p}}\Expe{\para{\int_{0}^{1}\frac{1}{x^{\alpha}}e^{\overline{X}(tx)}dx}^{p} }\\
\leq &\frac{1}{t^{(\alpha-1)p}}\Expe{\para{\int_{0}^{1}\frac{1}{x^{\alpha}}e^{\overline{Y}(x)}dx}^{p} }\\
\end{eqalign}
It suffices to show finiteness of the above expectation for $Y$. We follow \cite[Lemma A.1]{david2016liouville}
\begin{eqalign}
&\para{\Expe{\para{\int_{0}^{1}\frac{1}{x^{\alpha}}e^{ \overline{Y}^{1}(x)}\dx}^{p}} }^{1/p}\\  \leq &\para{\Expe{\para{\int_{0}^{1/2}\frac{1}{x^{\alpha}}e^{ \overline{Y}^{1}(x)}\dx}^{p}} }^{1/p}+c\para{\Expe{\para{\int_{1/2}^{1}e^{ \overline{Y}^{1}(x)}\dx}^{p}} }^{1/p}\\   \leq &2^{\alpha-1}\para{\Expe{\para{\int_{0}^{1}\frac{1}{x^{\alpha}}e^{ \overline{Y}^{1}(x)}\dx}^{p}} }^{1/p}+C,
\end{eqalign}
So for finiteness we require $\alpha-1<0$.

\end{proof}

\section{Moments of the maximum and minimum of modulus of GMC }\label{maxminmodGMC}
In this section we study tail estimates and small ball estimates of the maximum/minimum of shifted GMC.
\subsection{Positive moments modulus}
First we nee to estimate the supremum of the field $X_{T}:=\gamma U(T+s)\cap U(T)$ using  \cref{them:BTIS} and \cref{thm:entropyintegralest}. 
\begin{lemma}\label{lem:supremumintersectGaussian}Fix $L,s$ and $p\geq 0$. Then we have
\begin{eqalign}
e^{-p\beta \ln\frac{\delta}{s}}\Expe{\sup_{T\in [0,L]}e^{p\gamma  U(T+s)\cap U(T)}}\leq& \branchmat{c\frac{1}{s^{12\sqrt{2}p\gamma+\beta p(p-1) }}\sqrt{\ln\frac{\delta}{s}}& \frac{L}{s}\geq e\\c\frac{1}{s^{\beta p(p-1) }}\sqrt{\ln\frac{\delta}{s}}& \frac{L}{s}\leq e},  
\end{eqalign}
for some constant $c$. 
\end{lemma}
\begin{proof}
We begin by studying the case $p=1$ and estimating the entropy number. The field has covariance and $L^2$-difference
\begin{eqalign}
&\Expe{X_{T}X_{S}}=\gamma^{2}R_{s}^{\delta}(s+\abs{T-S})\leq \gamma^{2}\para{\ln\frac{\delta}{s+\abs{T-S}}}1_{s+\abs{T-S}\leq \delta},\\
&\Expe{\para{X_{T}-X_{S}}^{2}}\leq \gamma^{2}\para{\ln\para{\frac{\abs{T-S}}{s}+1}}1_{s+\abs{T-S}\leq \delta},\\
&\sigma_{L}^{2}=\sup_{t\in [0,L]}\Expe{X_{t}^{2}}=\gamma^{2} \ln\frac{\delta}{s},
\end{eqalign}
where $R$ is defined in \cref{eq:Ucovariance} and so we let
\begin{eqalign}
d(T,S):=\sqrt{\gamma^{2} \para{\ln\para{\frac{\abs{T-S}}{s}+1}}1_{s+\abs{T-S}\leq \delta}}.    
\end{eqalign}
Therefore, 
\begin{eqalign}
d(0,T)\leq \epsilon \doncl T\leq s\para{e^{\frac{1}{\gamma^{2}}\epsilon^{2}}-1}=:r_{\epsilon,s} \end{eqalign}
and so for the space $[0,L]$, we have the entropy number $N(T,d,\epsilon)=\frac{L}{r_{\epsilon,s}}$. We also have the diameter
\begin{eqalign}
\diam([0,L])=d(0,L)=\sqrt{ \gamma^{2}\para{\ln\para{\frac{L}{s}+1}}1_{s+L\leq \delta}}.  
\end{eqalign}
Here we need to split cases over $\frac{L}{s}\geq e$ and $\frac{L}{s}\leq e$. In the first case we will get a singularity from the expected supremum, and in the second case it will be bounded. The integral estimate in \cref{thm:entropyintegralest} is
\begin{eqalign}
&\int_{0}^{\diam([0,L])}\sqrt{\ln N(T, d, \e)}\dint\epsilon\\
=&\int_{0}^{\diam([0,L])}\sqrt{\ln\para{\frac{L}{s\para{e^{\frac{1}{\gamma^{2}}\epsilon^{2}}-1}}}}\dint\epsilon\\
\leq &\gamma\sqrt{2}\sqrt{\frac{L}{s}} \int_{0}^{\sqrt{\frac{s}{L\gamma^{2}}}\diam([0,L])}\sqrt{\ln\frac{1}{\epsilon}}\dint\epsilon\\
\leq &\sqrt{2} \diam([0,L])\sqrt{\ln\frac{1}{\sqrt{\frac{s}{L\gamma^{2}}}\diam([0,L])}}\\
= &\gamma \sqrt{ \para{\ln\para{\frac{L}{s}+1}}\ln\frac{L}{s \ln\para{\frac{L}{s}+1}}}\\
\leq &\gamma\para{\ln\para{\frac{L}{s}+1}} \sqrt{ 1+\para{\ln\para{\frac{L}{s}+1}}^{-1}\ln\frac{1}{ \ln\para{\frac{L}{s}+1}}}.
\end{eqalign}
When we have $\frac{L}{s}\geq e$, then we can further upper bound 
\begin{eqalign}
\Expe{\norm{X}}\leq 12\int_{0}^{\diam([0,L])}\sqrt{\ln N(T, d, \e)}\dint\epsilon\leq     12\sqrt{2}\gamma \ln\para{\frac{L}{s}+1}.
\end{eqalign}
When we have $\frac{L}{s}\leq e$, then we simply bound $\diam([0,L])\leq \sqrt{ \gamma^{2}\para{\ln\para{e+1}}}=B_{e} $ and so
\begin{eqalign}
\int_{0}^{\diam([0,L])}\sqrt{\ln N(T, d, \e)}\dint\epsilon\leq &\int_{0}^{B_{e}}\sqrt{\ln\para{\frac{e}{\para{e^{\frac{1}{\gamma^{2}}\epsilon^{2}}-1}}}}\dint\epsilon,
\end{eqalign}
which is a finite integral. We return to the main supremum and use \cref{them:BTIS}
\begin{eqalign}
&\Expe{\sup_{T\in [0,L]}e^{\gamma U(T+s)\cap U(T)}}\\
=&\int_{0}^{\infty}\Proba{\sup_{T\in [0,L]}\gamma U(T+s)\cap U(T)\geq \ln t}\dt \\
\leq &1+\int_{0}^{\infty}\Proba{\sup_{T\in [0,L]}\gamma U(T+s)\cap U(T)\geq u}e^{u}\du \\
\leq&e^{\Expe{\norm{X}}}+ \int_{\Expe{\norm{X}}}^{\infty}\Proba{\sup_{T\in [0,L]}\gamma U(T+s)\cap U(T)\geq u}e^{u}\du \\
\stackrel{\cref{them:BTIS}}{\leq}&e^{\Expe{\norm{X}}}+ \int_{\Expe{\norm{X}}}^{\infty}2\expo{-\frac{(u-\Expe{\norm{X}})^{2}}{2\sigma_{L}^{2}}}e^{u}\du \\
=&e^{\Expe{\norm{X}}}+ e^{\Expe{\norm{X}}}\int_{0}^{\infty}2\expo{-\frac{u^{2}}{2\sigma_{L}^{2}}}e^{u}\du \\
\leq &e^{\Expe{\norm{X}}}+ e^{\Expe{\norm{X}}}\sqrt{2\pi\sigma_{L}^{2}}e^{\frac{1}{4}2\sigma_{L}^{2}}\\
\leq& e^{ 12\sqrt{2}\gamma \ln\para{\frac{L}{s}+1}}\para{1+\sqrt{2\pi}\sqrt{\gamma^{2} \ln\frac{\delta}{s}}e^{\frac{\gamma^{2}}{2} \ln\frac{\delta}{s}}}.
\end{eqalign}
So for the normalized supremum we have the bound
\begin{eqalign}
e^{-\beta \ln\frac{\delta}{s}}\Expe{\sup_{T\in [0,L]}e^{\gamma U(T+s)\cap U(T)}}\leq&  e^{ 12\sqrt{2}\gamma \ln\para{\frac{L}{s}+1}}\para{e^{-\beta \ln\frac{\delta}{s}}+\sqrt{2\pi}\sqrt{\gamma^{2} \ln\frac{\delta}{s}}} \\
\leq& c\frac{1}{s^{12\sqrt{2}\gamma}}\para{1+\sqrt{\ln\frac{\delta}{s}}},
\end{eqalign}
for some constant $c$. \\
In the case of general $p>0$, we let $\tilde{\gamma}=p\gamma$ to get
\begin{eqalign}
\Expe{\sup_{T\in [0,L]}e^{p\gamma U(T+s)\cap U(T)}}\leq& e^{ 12\sqrt{2}p\gamma \ln\para{\frac{L}{s}+1}}\para{1+\sqrt{2\pi}\sqrt{p^{2}\gamma^{2} \ln\frac{\delta}{s}}e^{\frac{p^{2}\gamma^{2}}{2} \ln\frac{\delta}{s}}}
\end{eqalign}
and so when normalized we have the bound
\begin{eqalign}
e^{-p\beta \ln\frac{\delta}{s}}\Expe{\sup_{T\in [0,L]}e^{p\gamma  U(T+s)\cap U(T)}}\leq& c\frac{1}{s^{12\sqrt{2}p\gamma+\beta p(p-1) }}\sqrt{\ln\frac{\delta}{s}}.
\end{eqalign}

\end{proof}

\begin{proposition}\label{prop:maxmoduluseta}
\pparagraph{Moments $p\in [1,\frac{2}{\gamma^{2}})$ using union bound }
For $L,\delta,x\geq 0$  and $\delta\leq 1$ we have
\begin{eqalign}\label{eq:maxmodulusetapone}
\Expe{\para{\supl{T\in[0,L] }\etamu{T,T+x}{\delta}}^{p}}\leq c\ceil{\frac{L}{x}}\Expe{\para{\etamu{0,x}{\delta}}^{p} }.    
\end{eqalign}
\pparagraph{Moments $p\in [1,\frac{2}{\gamma^{2}})$ using decoupling}
We have the bound
\begin{eqalign}
&\Expe{\para{\int_{0}^{x}\sup_{T\in [0,L]}e^{\overline{U(T+s)\cap U(T)}} \deta_{+}(s)}^{p}  }\leq \branchmat{c x^{p(1-\alpha)} & \frac{L}{x}\geq e\\c x^{p(1-\tilde{\alpha})} & \frac{L}{x}\leq e},    
\end{eqalign}
for $\alpha:=12\sqrt{2}\gamma+\beta (p-1)+\epsilon<1$ and $\tilde{\alpha}:=\beta (p-1)+\epsilon<1$.
\pparagraph{Moments $p\in (0,1)$}
We have the bound
\begin{eqalign}
\Expe{\para{\int_{0}^{x}\sup_{T\in [0,L]}e^{\overline{U(T+s)\cap U(T)}} \deta_{+}(s)}^{p}  }\leq & \branchmat{c x^{p(1-12\sqrt{2}\gamma)}\para{\ln\frac{1}{x}}^{p/2},     & \frac{L}{x}\geq e\\c \para{\ln\frac{1}{x}}^{p/2},     & \frac{L}{x}\leq e},
\end{eqalign}
where $c$ is a constant that diverges as $12\sqrt{2}\gamma\to 1$.
\end{proposition}
\begin{proof}
\pparagraph{Moments $p\in [1,\frac{2}{\gamma^{2}})$ using union bound }
We cover the interval $[0,L+x]$ by $I_{k}:=[kx,(k+1)x], k\in [0,n]$ for $n:=\ceil{\frac{L}{x}}$. Given any $T\in [0,L]$, we can find $I_{k},I_{k+1}$ so that they cover $[T,T+x]$ and so we bound by
\begin{eqalign}
\etamu{T,T+x}{\delta}\leq \etamu{I_{k}}{\delta}  +\etamu{I_{k+1}}{\delta}.   
\end{eqalign}
Therefore, we have the bound 
\begin{eqalign}\label{eqsupr}
&\Expe{\para{\supl{T\in[0,L] }\etamu{T,T+x}{\delta}}^{p}}\\
&\leq c_{p} \Expe{\para{\maxl{k=0,..,\frac{n}{2}}\etamu{I_{2k}}{\delta}}^{p}}+c_{p}\Expe{\para{\maxl{k=0,..,\frac{n}{2}}\etamu{I_{2k+1}}{\delta}}^{p}}.
\end{eqalign}
So from the union bound we have
\begin{eqalign}
\eqref{eqsupr}\leq c_{p} n \Expe{\para{\etamu{0,x}{\delta}}^{p} }  \end{eqalign}
\proofparagraph{Case $p\in (0,1)$}
Here, we start by applying Jensen's inequality
\begin{eqalign}\label{eq:sintegral}
\Expe{\para{\int_{0}^{x}\sup_{T\in [0,L]}e^{\overline{U(T+s)\cap U(T)}} \deta_{+}(s)}^{p}  }\leq \para{\int_{0}^{x}\Expe{\sup_{T\in [0,L]}e^{\overline{U(T+s)\cap U(T)}}} \ds}^{p}  ,    
\end{eqalign}
where $\deta_{+}(s):=e^{\overline{U(s)\setminus U(0)}}$ is independent. First, we study the case $\frac{L}{x}\geq e$. Using the \Cref{lem:supremumintersectGaussian} for $p=1$, we estimate
\begin{eqalign}\label{eq:sintegral2}
\eqref{eq:sintegral}\leq &c\para{\int_{0}^{x}\frac{1}{s^{12\sqrt{2}\gamma}}\para{1+\sqrt{\ln\frac{\delta}{s}}} \ds}^{p}\\
\leq &c x^{p(1-12\sqrt{2}\gamma)}\para{\ln\frac{1}{x}}^{p/2},    
\end{eqalign}
where $c=c(\gamma)$  diverges as $12\sqrt{2}\gamma\to 1$. For the case $\frac{L}{x}\leq e$, we simply bound by
\begin{eqalign}\label{eq:sintegral3}
\eqref{eq:sintegral}\leq &c\para{\int_{0}^{x}\para{1+\sqrt{\ln\frac{\delta}{s}}} \ds}^{p}\\
\leq &c \para{\ln\frac{1}{x}}^{p/2},    
\end{eqalign}
\proofparagraph{Case $p\in [1,\frac{1}{\beta})$ using decoupling}
Here we start with applying Minkowski's inequality
\begin{eqalign}\label{eq:sintegral10}
&\Expe{\para{\int_{0}^{x}\sup_{T\in [0,L]}e^{\overline{U(T+s)\cap U(T)}} \deta_{+}(s)}^{p}  }\\
\leq &\Expe{\para{\int_{0}^{x}\para{\Expe{\sup_{T\in [0,L]}e^{p\overline{U(T+s)\cap U(T)}}}}^{1/p} \deta_{+}(s)}^{p}}.    
\end{eqalign}
Using \Cref{lem:supremumintersectGaussian} for $\frac{L}{x}\geq e$, we estimate
\begin{eqalign}\label{eq:sintegral4}
\eqref{eq:sintegral10}\leq&c \Expe{\para{\int_{0}^{x}\frac{1}{s^{12\sqrt{2}\gamma+\beta (p-1) }}\para{\sqrt{\ln\frac{\delta}{s}}}^{1/p} \deta_{+}(s)}^{p}}.    
\end{eqalign}
To avoid the logarithmic factor we add and remove $s^{\epsilon}$ for $\epsilon>0$ and apply \cref{eq:singularalphaintegralestimate}
\begin{eqalign}\label{eq:sintegral5}
\eqref{eq:sintegral4}\leq&c x^{p(1-\alpha)}   
\end{eqalign}
for $\alpha:=12\sqrt{2}\gamma+\beta (p-1)+\epsilon<1$. For the case $\frac{L}{x}\leq e$, we instead get the exponent $\tilde{\alpha}:=\beta (p-1)+\epsilon$.\\
Of course, one can improve the bound by keeping the logarithmic factor and prove finiteness of the singular integral in \cref{eq:singularalphaintegralestimate}.

\end{proof}
\subsection{Negative moments modulus}
 Next we study the negative moments for the minimum of the modulus of GMC. 
\begin{proposition}\label{prop:minmodeta}
We have for $p>0$
\begin{eqalign}
\Expe{\para{\infl{T\in[0,L] }\etamu{T,T+x}{\delta}}^{-p}}\leq &c\ceil{\para{\frac{L}{x}}}\Expe{\para{\eta^{\delta}\spara{0,\frac{x}{2}}}^{-p} }\\
\leq& c\frac{L}{\delta} x^{\alpha_{2\delta}(p)},
\end{eqalign}
where $\alpha_{2\delta}(p)=\zeta(-p)-1$ if $\frac{x}{2}\leq \delta$ and $\alpha_{2\delta}(p)=-p$ if $\frac{x}{2}\geq \delta$.
\end{proposition}

\begin{proof}
We cover the interval $[0,L+x]$ by four sequences of intervals for $n:=\ceil{\frac{L}{x}}\geq 1$.\\
A generic interval $[T,T+x]$ always contains an interval of size $\frac{x}{2}$:
\begin{align*}\label{eq:minintervals}
&\tif T\in  [kx,kx+\frac{x}{2}], \tthen  \text{ the interval } J_{k}^{1}:=[kx+\frac{x}{2},x(k+1)]\subset [T,T+x],  \\
&\tif T\in  [xk+\frac{x}{2},x(k+1)], \tthen \text{ the interval }  J_{k}^{2}:=[x(k+1),x(k+1+\frac{1}{2})]\subset [T,T+x],  
\end{align*}
and we denote each of these collection as $C_{i}:=\set{J^{i}_{k}}_{k},i=1,2$. The GMC evaluated over them is \iid. By the union bound as we did \cref{eqsupr} we get:
\begin{eqalign}
  \Expe{\supl{T\in[0,\frac{L}{x}] }\para{\eta^{\delta}\spara{T,T+1}}^{-p}}\leq& \sum_{i=1,2}\Expe{\maxl{I\in C_{i} }\para{\eta_{\omega}^{1}\spara{I}}^{-p} }\\\leq &cn\Expe{\para{\eta^{\delta}\spara{0,\frac{x}{2}}}^{-p} }.
\end{eqalign}
\end{proof}
\begin{remark}
Here one can further study the small ball $\Proba{u\geq \min_{0\leq T\leq L}\etamu{T,T+x}{\delta} } $ and pull out the infimum of the field $U_{x}=U_{x}^{\delta}$ \begin{eqalign}
\Proba{u\geq \min_{0\leq T\leq L}G_{[T,T+x]}\etamu{T,T+x}{x} },   
\end{eqalign}
where $G_{[T,T+x]}:=\min_{0\leq s \leq x }e^{U_{x}(T+s)}$.
\end{remark}
\end{appendices}


\bibliography{sn-bibliography.bib}

\end{document}